\providecommand{\abs}[1]{\lvert#1\rvert}% absolute value
\newcommand{\defeq}{\vcentcolon=}
\newtheorem{lemma}{Lemma}[section]
\newtheorem{proposition}[lemma]{Proposition}
\newtheorem{theorem}[lemma]{Theorem}
\newtheorem{conjecture}[lemma]{Conjecture}
\newtheorem{corollary}[lemma]{Corollary}
\newtheorem{definition}[lemma]{Definition}
\newtheorem{remark}[lemma]{Remark}
\newtheorem{example}[lemma]{Example}
\let\matheu=\EuScript
\newcommand{\cl}{\text{cl}}
\newcommand{\FS}{\text{FS}}
\newcommand{\id}{\text{id}}
\newcommand{\Sym}{\text{Sym}}
\newcommand{\disk}{(\mathbb{D},\partial \mathbb{D})}
\newcommand{\spec}{\text{Spec}}
\newcommand{\Fuk}{\text{Fuk}}
\newcommand{\hol}{\text{hol}}
\newcommand{\tp}{\text{top}}
\newcommand{\coker}{\text{coker}}
\tikzset{every picture/.style={line width=0.75pt}}
\numberwithin{equation}{section}
\title{An open GW-formula for Lagrangians in Fano varieties}
\author{Mohamed El Alami}
\begin{document}
\maketitle

\begin{abstract}
    Given a Fano variety $Y$ and a simple normal crossings divisor $D\subseteq Y$ which is anti-canonical, we prove a formula relating counts of discs with boundary on a Lagrangian $L\subseteq Y\backslash D$ to counts of rational curves in $Y$, under suitable \emph{positivity} assumptions on $L$. This formula seriously constrains the topology of $L$ in many examples. Our main application is a super-potential formula for Fano cyclic coverings $X$ of $Y$. As a corollary, we show that all the small components of the Fukaya category of a Fano hypersurface $X\subseteq \mathbb{P}^{n+1}$ are split-generated by monotone Lagrangian tori.
\end{abstract}
\tableofcontents
%\subfile{Intro v2}
\section{Introduction}
In \cite{mypaper1}, we constructed a monotone Lagrangian torus $L$ in the index $1$ Fano hypersurface $X_{n+1}\subseteq \mathbb{P}^{n+1}$ and we showed that it split-generates the small component of the monotone Fukaya category $\text{Fuk}(X_{n+1})$. The torus $L$ was constructed as the pre-image of the Clifford torus $L_{\cl}\subseteq \mathbb{P}^n$ via a cyclic covering map $\phi:X_{n+1}\rightarrow \mathbb{P}^n$. In order to show split-generation, one needs to count Maslov index $2$ discs with boundary on $L$. This count was performed by means of a degeneration argument. Indeed, $X_{n+1}$ can be deformed to the singular toric hypersurface 
\begin{equation*}
X_{n+1}^0=V(t^{n+1}-x_0\dotsi x_n)\subseteq \mathbb{P}^{n+1},
\end{equation*}
where the counting problem is more explicit. 

In the present work, we consider a general $r$-fold cyclic covering map $\phi:X\rightarrow Y$ of Fano varieties which is branched along a smooth ample divisor $D_Y\subseteq Y$. Given a Lagrangian torus $L_Y\subseteq Y\backslash D_Y$, its pre-image $L_X\defeq \phi^{-1}(L_Y)\subseteq X$ will be Lagrangian for an appropriate choice of K\"ahler form. In all cases of interest, $L_X$ will also be a connected torus. Our aim is to relate the super-potentials of $L_X$ and $L_Y$.

Recall that the super-potential associated with $L_X$ is the formal sum
\begin{equation*}
 W_{L_X}= \sum_{\mu_{L_X}(\beta)=2} m_{0,\beta}(L_X)q^{\partial \beta},
\end{equation*}
where $m_{0,\beta}(L_X)$ is the count of discs in the class $\beta$ passing through a point $\text{pt}\in L_X$. If such a disc is disjoint from the ramification locus $D_X$, then it is just a lift of a Maslov index $2$ disc that is disjoint from the branch locus $D_Y$. However, if
a disc $u:\disk\rightarrow (X,L_X) $ of Maslov index $2$ intersects $D_X$ once at $z=0$, its image $v\defeq \phi\circ u:\disk \rightarrow (Y,L_Y)$ is a disc of Maslov index $2r$, with an $r$-fold tangency point to $D_Y$ at $z=0$. We show that this tangency point deforms to $r$ transverse intersection points at $t\zeta,\dots,t\zeta^r\in\mathbb{D}$, where $\zeta$ is a primitive $r^{\text{th}}$-root of unity, and $t$ is small. The idea then is to extend this deformation all the way to $t=1$, where $v$ breaks into smaller discs of Maslov index $2$ (see Figure \ref{intro-figure}). An interesting feature of this argument is the appearance of spherical bubbles as $t$ approaches $0$. These bubbles have a point constraint at $\infty$ and a divisor constraint at the $r^{\text{th}}$-roots of unity. Their count is $r!\langle\psi_{r-2}\text{pt} \rangle^Y_r$; the (regularized) point Gromov-Witten descendant in degree $r$.

To ensure compactness of the relevant moduli spaces of pseudo-holomorphic curves, we need to assume that $L_Y\subseteq Y\backslash D_Y$ is \emph{Maslov positive}, which means by definition that
\begin{equation*}
    \frac{1}{2}\mu_{L_Y}(v)\geq \max\{v\cdot D_Y,1\}
\end{equation*}
for all non-constant $I_Y$-holomorphic discs $v:\disk\rightarrow (Y,L_Y)$. This condition is stable under small perturbations of $I_Y$ which preserve $D_Y$, which is sufficient for transversality purposes.
\newpage
\begin{theorem}
Suppose the Lagrangian torus $L_Y\subseteq Y\backslash D_Y$ is Maslov positive. Then so is $L_X\subseteq X\backslash D_X$, and its super-potential is given by
\begin{equation}\label{super-potential-formula-introduction}
    \phi_*W_{L_X} = W^{Y\backslash D_Y}_{L_Y} + \left(W_{L_Y}^{D_Y}\right)^r - r!\langle\psi_{r-2}\emph{\text{pt}} \rangle^Y_r,
\end{equation}
where $\phi_*:\mathbb{C}[H_1(L_X)]\rightarrow \mathbb{C}[H_1(L_Y)]$ is pushforward on homology and $W_{L_Y}^{D_Y}$ (resp. $W^{Y\backslash D_Y}_{L_Y}$) is the contribution to $W_{L_Y}$ from discs which intersect (resp. do not intersect) $D_Y$.
\end{theorem}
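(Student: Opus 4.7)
The plan is to decompose the count of Maslov index $2$ discs on $L_X$ according to their geometric intersection number with the ramification divisor $D_X$. For any $I_X$-holomorphic disc $u$ on $L_X$ with $u\cdot D_X = k$, the composition $v = \phi\circ u$ is an $I_Y$-holomorphic disc on $L_Y$ with $v\cdot D_Y = rk$, and the canonical bundle formula $K_X = \phi^*K_Y + (r-1)D_X$ yields $\mu_{L_X}(u) = \mu_{L_Y}(v) - 2(r-1)k$. Combined with the Maslov positivity of $L_Y$, this gives $\tfrac{1}{2}\mu_{L_X}(u)\geq \max\{k,1\}$, which simultaneously establishes that $L_X$ is Maslov positive and forces every Maslov $2$ disc on $L_X$ to satisfy $k\in\{0,1\}$.

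The case $k=0$ is immediate: since $\phi:X\backslash D_X\to Y\backslash D_Y$ is an $r$-fold covering map and $\mathbb{D}$ is simply connected, every Maslov $2$ disc on $L_Y$ disjoint from $D_Y$ lifts uniquely through any prescribed basepoint of $L_X$. This sets up a bijection between point-constrained Maslov $2$ discs on $L_X$ disjoint from $D_X$ and point-constrained Maslov $2$ discs on $L_Y$ disjoint from $D_Y$, respecting boundary classes under $\phi_*$, and therefore contributes exactly $W^{Y\backslash D_Y}_{L_Y}$ to $\phi_*W_{L_X}$.

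For $k=1$, the local model $z\mapsto z^r$ of $\phi$ transverse to $D_X$ shows that $v=\phi\circ u$ is a Maslov $2r$ disc on $L_Y$ with a single $r$-fold tangency to $D_Y$ at an interior point, and that $u$ is uniquely recovered from $v$. I would realize the enumeration of such $u$ as one end of a one-parameter moduli family $\mathcal{M}_t$ of Maslov $2r$ discs $v:\disk\to(Y,L_Y)$ through $\phi(\text{pt})$ with $r$ marked interior points at $t\zeta,t\zeta^2,\dots,t\zeta^r$ mapping into $D_Y$, where $\zeta$ is a primitive $r$-th root of unity and $t\in(0,1)$. As $t\to 0$ the marked points coalesce at the origin, generically restoring the $r$-fold tangency and hence recovering the Maslov $2$ disc count on $L_X$ via the branched cover; however a sphere bubble may split off, obtained by rescaling the domain by $1/t$ near the collision to produce a rational curve in $Y$ with a point constraint at $\infty$ coming from the attaching node and divisor constraints at the $r$-th roots of unity, and identification of this bubble moduli space with the one-point descendant moduli space yields the count $r!\langle\psi_{r-2}\text{pt}\rangle^Y_r$, the $r!$ reflecting the $S_r$-action permuting the labels of the coalescing marked points. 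As $t\to 1$, the marked points approach the $r$-th roots of unity on $\partial\mathbb{D}$, which is disjoint from $D_Y$, so Gromov compactness forces $v$ to degenerate into a constant main component at $\phi(\text{pt})$ together with $r$ disc bubbles near the $\zeta^j$, each a Maslov $2$ disc on $L_Y$ meeting $D_Y$ once and passing through $\phi(\text{pt})$; summing over ordered bubble tuples contributes $(W^{D_Y}_{L_Y})^r$, and the cobordism identity reads off the stated formula.

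The principal obstacle will be the Gromov compactness analysis of the family $\mathcal{M}_t$ and the rigorous identification of its two boundary strata. Maslov positivity must be leveraged to rule out additional disc and sphere bubbling at intermediate $t\in(0,1)$ and in codimension $\geq 2$, to ensure that at $t=1$ the only admissible limit configuration is a constant main disc with $r$ distinct simple disc bubbles (no further nested bubbling inside any of them), and at $t=0$ to isolate the single sphere component inside the appropriate descendant moduli space. Identifying the $\psi^{r-2}$-class decoration as the natural trace of the $r$-fold tangency under rescaling, and carrying out the orientation bookkeeping that fixes the minus sign in front of the descendant correction together with the combinatorial factor $r!$, will require the most delicate work.
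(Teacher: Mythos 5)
Your proposal is correct and follows essentially the same approach as the paper: the dichotomy $u\cdot D_X\in\{0,1\}$ from Riemann--Hurwitz and Maslov positivity, the reduction of the $k=1$ count to the $r$-fold tangency invariant $\tau_\alpha(L_Y)$, and the one-parameter cobordism with $r$ marked points at $t\zeta^j$ whose two ends yield the $r!\langle\psi_{r-2}\text{pt}\rangle_r$ term and the $(W^{D_Y}_{L_Y})^r$ term are exactly what the paper carries out in Lemmas~\ref{count up is count down 1}--\ref{count up is count down 2} together with Theorem~\ref{main theorem}. The compactness, $\psi$-class, and orientation issues you correctly flag as the delicate part are handled by Lemmas~\ref{no disc bubbles}--\ref{gravitational-descendants} and the paper's orientation discussion preceding the statement of Theorem~\ref{main theorem}.
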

\begin{figure}
\begin{tikzpicture}[x=0.75pt,y=0.75pt,yscale=-.8,xscale=.8]
%uncomment if require: \path (0,494); %set diagram left start at 0, and has height of 494

%Straight Lines [id:da4869254614817071] 
\draw    (131,331) -- (591,329.5) ;
%Shape: Arc [id:dp4427161842024281] 
\draw  [draw opacity=0] (161.31,256.92) .. controls (187.55,259.79) and (206.84,269.38) .. (206.84,280.77) .. controls (206.84,294.43) and (179.12,305.5) .. (144.92,305.5) .. controls (110.72,305.5) and (83,294.43) .. (83,280.77) .. controls (83,269.28) and (102.64,259.61) .. (129.24,256.84) -- (144.92,280.77) -- cycle ; \draw   (161.31,256.92) .. controls (187.55,259.79) and (206.84,269.38) .. (206.84,280.77) .. controls (206.84,294.43) and (179.12,305.5) .. (144.92,305.5) .. controls (110.72,305.5) and (83,294.43) .. (83,280.77) .. controls (83,269.28) and (102.64,259.61) .. (129.24,256.84) ;  
%Shape: Ellipse [id:dp4152856200064785] 
\draw   (126.92,265.97) .. controls (126.92,255.22) and (134.98,246.5) .. (144.92,246.5) .. controls (154.86,246.5) and (162.92,255.22) .. (162.92,265.97) .. controls (162.92,276.73) and (154.86,285.44) .. (144.92,285.44) .. controls (134.98,285.44) and (126.92,276.73) .. (126.92,265.97) -- cycle ;
%Shape: Ellipse [id:dp7015231882246791] 
\draw  [dash pattern={on 4.5pt off 4.5pt}] (126.92,265.78) .. controls (126.92,262.34) and (134.98,259.55) .. (144.92,259.55) .. controls (154.86,259.55) and (162.92,262.34) .. (162.92,265.78) .. controls (162.92,269.22) and (154.86,272.01) .. (144.92,272.01) .. controls (134.98,272.01) and (126.92,269.22) .. (126.92,265.78) -- cycle ;
%Shape: Rectangle [id:dp5420533170814388] 
\draw  [color={rgb, 255:red, 0; green, 0; blue, 0 }  ,draw opacity=1 ][fill={rgb, 255:red, 0; green, 0; blue, 0 }  ,fill opacity=1 ] (142.76,283.11) -- (147.08,283.11) -- (147.08,287.78) -- (142.76,287.78) -- cycle ;
%Shape: Rectangle [id:dp5940221140553668] 
\draw  [color={rgb, 255:red, 1; green, 115; blue, 249 }  ,draw opacity=1 ][fill={rgb, 255:red, 2; green, 116; blue, 249 }  ,fill opacity=1 ] (159.87,265.98) -- (162.73,262.48) -- (165.97,265.58) -- (163.11,269.08) -- cycle ;
%Shape: Rectangle [id:dp1626334307979731] 
\draw  [color={rgb, 255:red, 1; green, 115; blue, 249 }  ,draw opacity=1 ][fill={rgb, 255:red, 3; green, 117; blue, 251 }  ,fill opacity=1 ] (136.11,271.33) -- (139.25,268.12) -- (142.21,271.52) -- (139.07,274.73) -- cycle ;
%Shape: Rectangle [id:dp03251487222403404] 
\draw  [color={rgb, 255:red, 1; green, 114; blue, 246 }  ,draw opacity=1 ][fill={rgb, 255:red, 1; green, 115; blue, 247 }  ,fill opacity=1 ] (133.95,261.1) -- (137,257.8) -- (140.05,261.88) -- (137,265.19) -- cycle ;
%Shape: Circle [id:dp05305877195836284] 
\draw   (276,242) .. controls (276,205.27) and (305.77,175.5) .. (342.5,175.5) .. controls (379.23,175.5) and (409,205.27) .. (409,242) .. controls (409,278.73) and (379.23,308.5) .. (342.5,308.5) .. controls (305.77,308.5) and (276,278.73) .. (276,242) -- cycle ;
%Straight Lines [id:da48636762045788884] 
\draw  [dash pattern={on 4.5pt off 4.5pt}]  (342.5,242) -- (360,242) -- (365,242) -- (409,242) ;
%Straight Lines [id:da25820278136767594] 
\draw  [dash pattern={on 4.5pt off 4.5pt}]  (297,193.5) -- (342.5,242) ;
%Straight Lines [id:da7997918697273179] 
\draw  [dash pattern={on 4.5pt off 4.5pt}]  (342.5,242) -- (299,291.5) ;
%Shape: Circle [id:dp20520446795733327] 
\draw   (521,230.25) .. controls (521,207.74) and (539.24,189.5) .. (561.75,189.5) .. controls (584.26,189.5) and (602.5,207.74) .. (602.5,230.25) .. controls (602.5,252.76) and (584.26,271) .. (561.75,271) .. controls (539.24,271) and (521,252.76) .. (521,230.25) -- cycle ;
%Shape: Circle [id:dp6439053126410501] 
\draw   (602.5,230.25) .. controls (602.5,216.44) and (613.69,205.25) .. (627.5,205.25) .. controls (641.31,205.25) and (652.5,216.44) .. (652.5,230.25) .. controls (652.5,244.06) and (641.31,255.25) .. (627.5,255.25) .. controls (613.69,255.25) and (602.5,244.06) .. (602.5,230.25) -- cycle ;
%Shape: Circle [id:dp3369644682993371] 
\draw   (491,183) .. controls (491,169.19) and (502.19,158) .. (516,158) .. controls (529.81,158) and (541,169.19) .. (541,183) .. controls (541,196.81) and (529.81,208) .. (516,208) .. controls (502.19,208) and (491,196.81) .. (491,183) -- cycle ;
%Shape: Circle [id:dp9925132908344252] 
\draw   (496,283) .. controls (496,269.19) and (507.19,258) .. (521,258) .. controls (534.81,258) and (546,269.19) .. (546,283) .. controls (546,296.81) and (534.81,308) .. (521,308) .. controls (507.19,308) and (496,296.81) .. (496,283) -- cycle ;
%Shape: Rectangle [id:dp5742264314417502] 
\draw  [color={rgb, 255:red, 0; green, 0; blue, 0 }  ,draw opacity=1 ][fill={rgb, 255:red, 0; green, 0; blue, 0 }  ,fill opacity=1 ] (536.68,261.43) -- (541,261.43) -- (541,266.11) -- (536.68,266.11) -- cycle ;
%Shape: Rectangle [id:dp33764904118340633] 
\draw  [color={rgb, 255:red, 0; green, 0; blue, 0 }  ,draw opacity=1 ][fill={rgb, 255:red, 0; green, 0; blue, 0 }  ,fill opacity=1 ] (531.68,198.43) -- (536,198.43) -- (536,203.11) -- (531.68,203.11) -- cycle ;
%Shape: Rectangle [id:dp05294426600521729] 
\draw  [color={rgb, 255:red, 0; green, 0; blue, 0 }  ,draw opacity=1 ][fill={rgb, 255:red, 0; green, 0; blue, 0 }  ,fill opacity=1 ] (600.34,227.91) -- (604.66,227.91) -- (604.66,232.59) -- (600.34,232.59) -- cycle ;
%Shape: Rectangle [id:dp816456146018121] 
\draw  [color={rgb, 255:red, 4; green, 117; blue, 249 }  ,draw opacity=1 ][fill={rgb, 255:red, 2; green, 116; blue, 249 }  ,fill opacity=1 ] (512.95,183.2) -- (515.81,179.7) -- (519.05,182.8) -- (516.19,186.3) -- cycle ;
%Shape: Rectangle [id:dp49767416799328257] 
\draw  [color={rgb, 255:red, 1; green, 115; blue, 247 }  ,draw opacity=1 ][fill={rgb, 255:red, 1; green, 115; blue, 247 }  ,fill opacity=1 ] (517.95,283.2) -- (520.81,279.7) -- (524.05,282.8) -- (521.19,286.3) -- cycle ;
%Shape: Rectangle [id:dp9478906407899135] 
\draw  [color={rgb, 255:red, 2; green, 116; blue, 249 }  ,draw opacity=1 ][fill={rgb, 255:red, 4; green, 117; blue, 249 }  ,fill opacity=1 ] (624.45,230.45) -- (627.31,226.95) -- (630.55,230.05) -- (627.69,233.55) -- cycle ;
%Shape: Rectangle [id:dp04119498012347522] 
\draw  [color={rgb, 255:red, 4; green, 117; blue, 249 }  ,draw opacity=1 ][fill={rgb, 255:red, 2; green, 116; blue, 249 }  ,fill opacity=1 ] (361.95,242.2) -- (364.81,238.7) -- (368.05,241.8) -- (365.19,245.3) -- cycle ;
%Shape: Rectangle [id:dp7827172489681185] 
\draw  [color={rgb, 255:red, 4; green, 117; blue, 249 }  ,draw opacity=1 ][fill={rgb, 255:red, 2; green, 116; blue, 249 }  ,fill opacity=1 ] (324.95,258.2) -- (327.81,254.7) -- (331.05,257.8) -- (328.19,261.3) -- cycle ;
%Shape: Rectangle [id:dp860753802302767] 
\draw  [color={rgb, 255:red, 4; green, 117; blue, 249 }  ,draw opacity=1 ][fill={rgb, 255:red, 2; green, 116; blue, 249 }  ,fill opacity=1 ] (325.95,228.2) -- (328.81,224.7) -- (332.05,227.8) -- (329.19,231.3) -- cycle ;
%Shape: Circle [id:dp39424968333900123] 
\draw  [fill={rgb, 255:red, 9; green, 0; blue, 0 }  ,fill opacity=1 ] (126.75,331) .. controls (126.75,328.65) and (128.65,326.75) .. (131,326.75) .. controls (133.35,326.75) and (135.25,328.65) .. (135.25,331) .. controls (135.25,333.35) and (133.35,335.25) .. (131,335.25) .. controls (128.65,335.25) and (126.75,333.35) .. (126.75,331) -- cycle ;
%Shape: Circle [id:dp838259013825976] 
\draw  [fill={rgb, 255:red, 9; green, 0; blue, 0 }  ,fill opacity=1 ] (586.75,329.5) .. controls (586.75,327.15) and (588.65,325.25) .. (591,325.25) .. controls (593.35,325.25) and (595.25,327.15) .. (595.25,329.5) .. controls (595.25,331.85) and (593.35,333.75) .. (591,333.75) .. controls (588.65,333.75) and (586.75,331.85) .. (586.75,329.5) -- cycle ;
%Shape: Triangle [id:dp92205643361426] 
\draw  [color={rgb, 255:red, 208; green, 4; blue, 249 }  ,draw opacity=1 ][fill={rgb, 255:red, 206; green, 4; blue, 247 }  ,fill opacity=1 ] (584,260) -- (588,268.5) -- (580,268.5) -- cycle ;
%Shape: Triangle [id:dp35879905595346706] 
\draw  [color={rgb, 255:red, 208; green, 4; blue, 249 }  ,draw opacity=1 ][fill={rgb, 255:red, 206; green, 4; blue, 247 }  ,fill opacity=1 ] (383,290) -- (387,298.5) -- (379,298.5) -- cycle ;
%Shape: Triangle [id:dp3323032344747072] 
\draw  [color={rgb, 255:red, 208; green, 4; blue, 249 }  ,draw opacity=1 ][fill={rgb, 255:red, 206; green, 4; blue, 247 }  ,fill opacity=1 ] (184,295) -- (188,303.5) -- (180,303.5) -- cycle ;
\draw   (341.1,236.97) -- (343.9,246.03)(346.78,240.18) -- (338.22,242.82) ;
\draw   (142.31,242.99) -- (148.69,250.01)(148.81,243.49) -- (142.19,249.51) ;
\draw   (557.31,225.99) -- (563.69,233.01)(563.81,226.49) -- (557.19,232.51) ;

% Text Node
\draw (116,340) node [anchor=north west][inner sep=0.75pt]   [align=left] {$t=0$};
% Text Node
\draw (583,340) node [anchor=north west][inner sep=0.75pt]   [align=left] {$t=1$};
% Text Node
\draw (341,337) node [anchor=north west][inner sep=0.75pt]   [align=left] {$t$};
\end{tikzpicture}
%\captionsetup{justification=centering,margin=1cm}
\caption{}
    \label{intro-figure}
\end{figure}

This theorem explains how to recover Hori-Vafa Landau-Ginzburg models that are mirrors to Fano varieties, in many examples, as super-potentials associated with Lagrangian tori. One particular example is that of Fano hypersurfaces in projective spaces which have been studied by N.Sheridan in \cite{sheridan-fano}.
\begin{theorem}
The degree $d$ Fano hypersurface $X_d\subseteq\mathbb{P}^{n+1}$, with $d\leq n$, contains a monotone Lagrangian torus $L_d$. For each $\lambda\in\mathbb{C}^*$, there exists a $\mathbb{C}^*$-local system $\xi_{\lambda}$ such that $(L_d,\xi_\lambda)$ split-generates the component $\Fuk(X)_{\lambda}$ of the Fukaya category.
\end{theorem}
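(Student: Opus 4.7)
The strategy is to realise $X_d$ as a $d$-fold cyclic cover of $\mathbb{P}^n$, pull back the Clifford torus to get $L_d$, apply the super-potential formula of the preceding theorem to compute $W_{L_d}$, and conclude split-generation via the standard open-closed criterion.

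I realise $X_d\subseteq\mathbb{P}^{n+1}$ as the $d$-fold cyclic cover $\phi\colon X_d\to\mathbb{P}^n$ branched along a smooth ample divisor $D_Y\subseteq\mathbb{P}^n$ of degree $d$, via $X_d\cong V(t^d-f)\subseteq\mathbb{P}^{n+1}$ with $f$ cutting out $D_Y$. The divisor $D_Y$ must be chosen so that the Clifford torus $L_{\cl}\subseteq\mathbb{P}^n\setminus D_Y$ is Maslov positive. By Cho--Oh, the non-constant $I_Y$-holomorphic discs on $L_{\cl}$ are all of the form $\beta=\sum a_i\beta_i$ with $a_i\geq 0$, where $\beta_0,\dots,\beta_n$ are the basic Maslov $2$ classes satisfying $\sum_i\beta_i=[\mathrm{line}]$ in $H_2(\mathbb{P}^n,L_{\cl})$. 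In particular $\sum_i(\beta_i\cdot D_Y)=d\leq n$, so for generic smooth $D_Y$ we can arrange $\beta_i\cdot D_Y\in\{0,1\}$ for each $i$. This gives $\tfrac12\mu(\beta)=\sum a_i\geq\max\{\beta\cdot D_Y,1\}$ for every non-constant class, verifying Maslov positivity. Setting $L_d:=\phi^{-1}(L_{\cl})$ then produces a connected monotone Lagrangian torus in $X_d$ for an appropriate K\"ahler form extending the pull-back from $\mathbb{P}^n$.

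Applying the preceding theorem with $r=d$ yields
\[
\phi_*W_{L_d}=W^{\mathbb{P}^n\setminus D_Y}_{L_{\cl}}+\bigl(W^{D_Y}_{L_{\cl}}\bigr)^d-d!\,\langle\psi_{d-2}\text{pt}\rangle^{\mathbb{P}^n}_d.
\]
The Clifford potential is $W_{L_{\cl}}=z_0+\cdots+z_n$ (subject to $z_0\cdots z_n=1$), and splits according to intersection with $D_Y$: after re-indexing so that $\beta_0,\dots,\beta_{d-1}$ meet $D_Y$, we have $W^{D_Y}_{L_{\cl}}=z_0+\cdots+z_{d-1}$ and $W^{\mathbb{P}^n\setminus D_Y}_{L_{\cl}}=z_d+\cdots+z_n$. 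The descendant $\langle\psi_{d-2}\text{pt}\rangle^{\mathbb{P}^n}_d$ is a classical genus zero Gromov--Witten invariant, computable by localisation or WDVV recursion.

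It remains to analyse the critical locus of $W_{L_d}$, viewed as a Laurent polynomial on the torus of $\mathbb{C}^*$-local systems on $L_d$. A general principle forces each critical value to lie in the spectrum of quantum multiplication by $c_1(X_d)$ on $QH^*(X_d)$; I expect the formula above to agree, after an explicit change of coordinates, with a chart of the Hori--Vafa mirror of $X_d$, whose critical values are known to exhaust this spectrum and to be non-degenerate \cite{sheridan-fano}. Granting this, every eigenvalue $\lambda$ of $c_1\ast$ corresponds to a non-degenerate critical point $\xi_\lambda\in H^1(L_d;\mathbb{C}^*)$, and the standard monotone split-generation criterion (Sheridan, after Abouzaid and Ritter--Smith) then forces $(L_d,\xi_\lambda)$ to split-generate $\Fuk(X_d)_\lambda$. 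The main obstacle is this last step: locating every critical point of the explicit Laurent polynomial and verifying non-degeneracy, which in practice amounts to matching the formula to the expected Hori--Vafa expression. The remaining steps are either immediate consequences of the preceding theorem or routine applications of established machinery.
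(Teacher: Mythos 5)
The central gap is your choice of Lagrangian. You set $L_d := \phi^{-1}(L_{\cl})$ where $L_{\cl}$ is the Clifford torus, i.e.\ the monotone moment fiber of $(\mathbb{P}^n,\omega_{\FS})$, and assert that $L_d$ is monotone ``for an appropriate K\"ahler form extending the pull-back.'' This fails. By Proposition \ref{Kahler form}, any K\"ahler form on $X_d$ in the relevant cohomology class differs from $\phi^*\omega_{\FS}$ only by $dd^c\rho$ with $\rho$ supported near $D_X$ and away from $L_d$; hence for every disc $u$ on $L_d$ one has $\int u^*\omega_X = \int (\phi\circ u)^*\omega_{\FS}$, with no freedom. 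But by the open Riemann--Hurwitz formula (Lemma \ref{Riemann Hurwitz}), $\tfrac12\mu_{L_d}(u)=\tfrac12\mu_{L_{\cl}}(v)-\tfrac{d-1}{d}\,v\cdot D_Y$ for $v=\phi\circ u$, so the Maslov index is \emph{not} uniformly rescaled. Concretely, taking $v=v_i$ from \eqref{generators of relative H2}: for $i<d$ one gets $\tfrac12\mu_{L_d}=1/d$ with area $1/(n+1)$, while for $i\geq d$ one gets $\tfrac12\mu_{L_d}=1$ with the same area $1/(n+1)$. The ratios are $(n+1)/d$ and $n+1$, so $L_d$ is not monotone for $d\geq 2$. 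The paper avoids this by lifting a \emph{different} moment fiber $L_{\mathbf{r}}$ with $r_0=\dots=r_{d-1}=\tfrac{1}{d(n+2-d)}$, $r_d=\dots=r_n=\tfrac{1}{n+2-d}$, precisely chosen so that the Riemann--Hurwitz corrected Maslov index and the area become proportional; this is the content of Lemma \ref{lift is monotone}, which your argument is missing.

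A secondary issue: you leave the final step (``locating every critical point... and verifying non-degeneracy'') as an expectation, deferring to a match with the Hori--Vafa mirror. The paper carries this out explicitly: with the correct $L_{\mathbf{r}}$ the super-potential is $W_{L}=(z_0+\dots+z_{d-1})^d+z_d+\dots+z_n$ on the $\mathbb{Z}_d$-quotient, and a direct Hessian computation shows the critical values are $(n+2-d)\lambda$ for $\lambda^{n+2-d}=d^d$, each non-degenerate; Proposition \ref{split-generation} then upgrades non-degeneracy to split-generation via $HF^*(\mathbf{L})\cong H^*(L)\neq 0$. Your Maslov-positivity verification for the toric moment fiber via Cho--Oh and your use of the super-potential formula with $r=d$ are both fine; it is the specific choice of $\mathbf{r}$ needed to make the lift monotone that is the essential step you have skipped.
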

The Lagrangian tori $L_d$ only see the small components of the Fukaya category. In the critical case $d=n+1$, our construction still works: the monotone Lagrangian torus $L_{n+1}$ split-generates the small component. It also sees the big component, but it doesn't generate it (it behaves like a point in the mirror), see \cite{mypaper1} for a more detailed discussion. See also \cite{sheridan-fano} for a construction of Lagrangian spheres which generate the big component.

The super-potential formula (\ref{super-potential-formula-introduction}) is especially interesting when $D_Y$ is anti-canonical. Recall that a \emph{weak LG-model} for $Y$ is a Laurent polynomial $f_Y$ such that the constant term $c_0(f^k_Y)$ is the $k^{\text{th}}$-regularized quantum period of $Y$, for all $k\geq 2$ (see \cite{katzarkov-przyj-old-and-new}). In \cite{tonkonog-periods}, Tonkonog shows that super-potentials of monotone Lagrangian tori are weak LG-models. In light of this result, formula (\ref{super-potential-formula-introduction}) explains how to obtain a weak LG-model for $X$ given one for $Y$, when $D_Y$ is anti-canonical. This is a purely algebro-geometric statement.
\begin{conjecture}
Let $Y$ be a smooth Fano variety of index $r$ and $D_Y\subseteq Y$ a smooth anti-canonical divisor. Let $X$ be the $r$-fold cyclic covering of $Y$ which is branched along $D_Y$. If $f_Y$ is a weak LG-model for $Y$, then $f_Y^r-c_0(f^r_Y)$ is a weak LG-model for $X$.
\end{conjecture}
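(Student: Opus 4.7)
The plan is to deduce the conjecture from the super-potential formula (\ref{super-potential-formula-introduction}) combined with Tonkonog's theorem from \cite{tonkonog-periods}, which identifies the super-potential of a monotone Lagrangian torus with a weak LG-model.

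First I would try to realize the given weak LG-model $f_Y$ as the super-potential $W_{L_Y}$ of a monotone Lagrangian torus $L_Y\subseteq Y\backslash D_Y$ such that $L_X=\phi^{-1}(L_Y)$ is a single connected monotone Lagrangian torus. Because $D_Y$ is anti-canonical, the Maslov index of any disc $v$ with boundary on $L_Y\subseteq Y\backslash D_Y$ equals $2(v\cdot D_Y)$, so every Maslov $2$ disc intersects $D_Y$ exactly once; in particular $W_{L_Y}^{Y\backslash D_Y}=0$, $L_Y$ is automatically Maslov positive, and $f_Y=W_{L_Y}=W_{L_Y}^{D_Y}$. Plugging into the main theorem gives
\begin{equation*}
\phi_*W_{L_X}=f_Y^{\,r}-r!\langle\psi_{r-2}\text{pt}\rangle_r^Y.
\end{equation*}
Applying Tonkonog's theorem to $L_Y$ with $k=r$ identifies $r!\langle\psi_{r-2}\text{pt}\rangle_r^Y=c_0(W_{L_Y}^r)=c_0(f_Y^{\,r})$, so $\phi_*W_{L_X}=f_Y^{\,r}-c_0(f_Y^{\,r})$.

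Next I would use the fact that the pushforward $\phi_*:\mathbb{C}[H_1(L_X)]\rightarrow\mathbb{C}[H_1(L_Y)]$ is a ring homomorphism which is injective on monomials (since $L_X\rightarrow L_Y$ is a connected cover of tori of the same dimension, so $H_1(L_X)\hookrightarrow H_1(L_Y)$). Hence it preserves constant terms: $c_0((\phi_*W_{L_X})^k)=c_0(\phi_*(W_{L_X}^k))=c_0(W_{L_X}^k)$ for every $k\geq 2$. Applying Tonkonog's theorem now to $L_X$ yields
\begin{equation*}
c_0\bigl((f_Y^{\,r}-c_0(f_Y^{\,r}))^k\bigr)=c_0(W_{L_X}^k)=k!\langle\psi_{k-2}\text{pt}\rangle_k^X,
\end{equation*}
which is precisely the assertion that $f_Y^{\,r}-c_0(f_Y^{\,r})$ is a weak LG-model for $X$.

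The main obstacle is the Lagrangian realization step: not every Laurent polynomial that is a weak LG-model for $Y$ is known to arise as the super-potential of a monotone Lagrangian torus in $Y\backslash D_Y$, and even when one exists the hypothesis that $L_X=\phi^{-1}(L_Y)$ is a single connected monotone torus must be checked. To remove these symplectic hypotheses one would need to replace the intermediate Lagrangian step by a direct algebro-geometric comparison of quantum periods. A plausible route is to degenerate $X$ to a reducible fibre whose components are copies of $Y$ glued along $D_Y$, and extract a relative/logarithmic Gromov-Witten identity relating $\langle\psi_{k-2}\text{pt}\rangle_k^X$ to tangency-weighted invariants of $(Y,D_Y)$, so that the transformation $f_Y\mapsto f_Y^r-c_0(f_Y^r)$ at the level of constant terms of powers becomes a purely numerical consequence of the combinatorics of the degeneration.
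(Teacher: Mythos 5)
The statement you are trying to prove is a conjecture, not a theorem: the paper explicitly leaves it open (see the second conjecture following Remark \ref{correct mirror is quotient}) and proves only a conditional version, Corollary \ref{LG-model for cyclic covers}, whose extra hypothesis is that $Y\backslash D_Y$ contains a graded exact Lagrangian torus. Your proposal is, in substance, a re-derivation of that conditional corollary: Theorem \ref{super-potential-theorem} for the disc counts plus Tonkonog's quantum period theorem for $L_Y$ and $L_X$, together with the observation that $\phi_*$ preserves constant terms because $L_X\rightarrow L_Y$ is a connected finite cover of tori. You also correctly flag that the Lagrangian realization is exactly the unresolved point, which is the same gap that makes this a conjecture rather than a theorem in the paper.

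Two refinements are worth noting. First, you are demanding more than necessary by asking to realize the given $f_Y$ literally as $W_{L_Y}$. Since the property ``$g$ is a weak LG-model'' depends only on the sequence of constant terms $c_0(g^k)$, and $c_0\bigl((f_Y^r-c_0(f_Y^r))^k\bigr)=\sum_j\binom{k}{j}(-c_0(f_Y^r))^{k-j}c_0(f_Y^{rj})$ is determined by $\{c_0(f_Y^m)\}_m$, it suffices to find \emph{some} graded exact Lagrangian torus $L_Y\subseteq Y\backslash D_Y$, deduce that $W_{L_Y}^r-c_0(W_{L_Y}^r)$ is a weak LG-model for $X$, and then transport the conclusion to $f_Y$ using $c_0(f_Y^m)=c_0(W_{L_Y}^m)$ for all $m$ (both being weak LG-models for $Y$). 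This is precisely the reduction the paper implicitly uses, and it removes the ``realize $f_Y$'' difficulty you raise, leaving only the existence of a suitable torus. Second, the step asserting that $\frac{1}{2}\mu_{L_Y}(v)=v\cdot D_Y$ for all discs (so $W_{L_Y}^{Y\backslash D_Y}=0$ and Maslov positivity holds) requires $\eta_{L_Y}=0$ by identity \eqref{Maslov identity}, i.e.\ that $L_Y$ is graded in $Y\backslash D_Y$; monotonicity alone does not imply this. The paper's hypothesis ``graded exact'' supplies both the vanishing of $\eta_{L_Y}$ and, via the integral formula in the proof of Corollary \ref{LG-model for cyclic covers}, monotonicity.

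For the unconditional conjecture, you and the paper are blocked at the same place. Your closing suggestion of a purely algebro-geometric argument via degeneration of $X$ to a reducible fibre glued along $D_Y$, converting the statement into a relative GW identity, is consistent with the remark in the paper that this is ``purely an algebro-geometric statement,'' but no such argument is given there.
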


Our methods show for instance that this conjecture holds if $Y\backslash D_Y$ contains a graded exact Lagrangian torus, see also Remark \ref{correct mirror is quotient} and the discussion preceding it.

The deformation argument outlined above (see Figure \ref{intro-figure}) only requires $L_Y$ to be Maslov positive, oriented, spin, but not necessarily a torus. Unsurprisingly, one can exploit the spheres contributing to $\langle\psi_{r-2}\text{pt} \rangle^Y_r$ in order to produce enough circles in $L_Y$ that bound holomorphic discs in $Y$, hence constraining the topology of $L_Y$.
\begin{theorem}\label{topological constraint}
Suppose $Y$ is a Fano variety of index $r\geq 2$ such that $\langle \psi_{r-2}\emph{\text{pt}}\rangle^Y_{r}\neq 0$. Let $H_1,\dots,H_r$ be a collection of homologous divisors in general position whose union $D_Y=\cup_{i=1}^r H_i$ is anti-canonical. If $L_Y\subseteq Y\backslash D_Y$ is an oriented, spin, graded, and exact Lagrangian of non-positive sectional curvature, then $L_Y$ is finitely covered by a product $(S^1)^{r-1}\times K$. 
\end{theorem}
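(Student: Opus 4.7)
The strategy is to extend the degeneration argument behind the super-potential formula to the present multi-divisor setting, use the non-vanishing of $\langle\psi_{r-2}\text{pt}\rangle_r^Y$ to produce $r$ holomorphic Maslov-$2$ discs bounding $L_Y$ whose boundaries span a rank-$(r-1)$ subgroup of $H_1(L_Y;\mathbb{R})$, and then invoke a Gromoll-Wolf style splitting theorem to deduce the structure of a finite cover.

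For the pseudo-holomorphic step, I consider the one-parameter moduli $\mathcal{M}(t)$, for $t\in(0,1]$, of discs $v:\disk\to(Y,L_Y)$ through a basepoint $\text{pt}\in L_Y$, with $r$ interior marked points at $t\zeta,t\zeta^2,\dots,t\zeta^r$ (where $\zeta=e^{2\pi i/r}$) such that the $i$-th marked point maps to $H_i$. Since $L_Y$ is graded and $D_Y=\bigcup H_i$ is anti-canonical, any such $v$ has Maslov index $2r$; meanwhile exactness of $L_Y$ in $Y\setminus D_Y$ kills Maslov-$0$ discs and thus ensures Maslov positivity of $L_Y$, giving compactness of $\mathcal{M}(t)$ away from the prescribed boundary at $t=0,1$. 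As $t\to 0$, the $r$ interior marked points coalesce, producing a disc with an $r$-fold tangency to $D_Y$ at the origin together with a spherical bubble whose signed count is $r!\langle\psi_{r-2}\text{pt}\rangle_r^Y\neq 0$ (the $r!$ coming from permutations of the coalescing marked points). As $t\to 1$ the disc breaks, in the manner of Figure~\ref{intro-figure}, into $r$ Maslov-$2$ sub-discs $\beta_1,\dots,\beta_r$ with $\beta_i\cdot H_j=\delta_{ij}$ by general position of the $H_i$. The cobordism $\mathcal{M}([0,1])$ equates the two counts, forcing the existence of holomorphic representatives in each of the classes $\beta_i$.

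To bound $\text{rank}_{\mathbb{Z}}\langle\partial\beta_1,\dots,\partial\beta_r\rangle\geq r-1$, observe that since the $H_i$ are homologous in $Y$, each difference $[H_i]-[H_j]\in H^2(Y,L_Y)$ lies in the image of the connecting map $\delta:H^1(L_Y)\to H^2(Y,L_Y)$; writing $\delta\alpha_{ij}=[H_i]-[H_j]$, one gets $\alpha_{ij}(\partial\beta_k)=\beta_k\cdot(H_i-H_j)=\delta_{ik}-\delta_{jk}$, and the classes $\alpha_{1r},\dots,\alpha_{r-1,r}$ then detect $r-1$ independent boundary cycles (modulo the single relation $\sum_i\partial\beta_i=\partial\beta_0$ carried by the parent disc at $t=0$). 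To upgrade this to $\pi_1$, apply Cartan-Hadamard ($\widetilde{L_Y}\cong\mathbb{R}^n$) and observe that each $\partial\beta_i$ gives a deck translation $g_i$ whose axis is determined, via non-positive curvature and a lift of $\beta_i$ to a pseudo-holomorphic half-plane, by the asymptotics of that lift. A flat-strip argument in the spirit of Eberlein-O'Neill (and as used by Damian for exact Lagrangians of non-positive curvature) then forces $g_ig_j=g_jg_i$, giving $\mathbb{Z}^{r-1}\subset\pi_1(L_Y)$. The Gromoll-Wolf splitting theorem then exhibits a finite cover of $L_Y$ diffeomorphic to $T^{r-1}\times K$ for some closed $K$.

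The main obstacle is the commutation step: upgrading the homological independence of the $\partial\beta_i$ to pairwise commuting $\pi_1$-elements requires combining grading, exactness, pseudo-holomorphicity of the $\beta_i$, and non-positive sectional curvature into a precise axial-isometry statement for the deck translations $g_i$. A secondary technical point is controlling the bubble structure at $t\to 0$ sharply enough to isolate the descendant contribution and produce the $r!$-prefactor, while excluding stray boundary strata of $\mathcal{M}([0,1])$ that would interfere with the cobordism identity.
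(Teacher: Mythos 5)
Your first two steps—the degeneration producing Maslov-$2$ classes $\beta_1,\dots,\beta_r$ with $\beta_i\cdot H_j=\delta_{ij}$ and $m_{0,\beta_i}\neq 0$, and the homological rank argument showing the $\partial\beta_i$ span a rank $r-1$ free abelian subgroup of $H_1(L_Y)$—match the paper. (The paper shows the rank bound directly by observing that any relation $\sum a_i\partial\beta_i=0$ forces a spherical class $C$ with $C\cdot H_j=a_j$ for each $j$, and since the $H_j$ are homologous the $a_j$ must be equal; your $\alpha_{ij}$ formulation via the connecting map is an equivalent repackaging.)

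The gap is in the $\pi_1$ step. You propose to show the deck translations $g_i$ commute pairwise via a flat-strip / axis-geometry argument, then apply Gromoll--Wolf. This has two problems. First, a flat-strip argument as you sketch it has no clear hypothesis to launch from: flat strips appear once you know two axial isometries bound a flat region, but nothing in your setup supplies that. The actual geometric input available is that $m_{0,\beta_i}\neq 0$, which says the evaluation map $ev:\mathscr{M}_{H_i}(L,\beta_i)\to L$ has nonzero degree; since this map factors through the component $\mathcal{L}_{\gamma_i}(L)$ of the free loop space, one concludes $H_n(\mathcal{L}_{\gamma_i}(L))\neq 0$. That is what plugs into Fukaya's Lemma 2.4 (\cite{Fukaya-icm}), which uses non-positive curvature to conclude the \emph{centralizer} $C_{\gamma_i}\subseteq\pi_1(L)$ has finite index. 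You identify the right references but route around the loop-space homology, which is the actual mechanism. Second, even granting pairwise commutation of the $g_i$, a $\mathbb{Z}^{r-1}$ subgroup of $\pi_1(L)$ is not enough for the splitting theorem: Gromoll--Wolf / Lawson--Yau need a \emph{normal} (in practice, central) free abelian subgroup. The paper handles this by intersecting the finite-index centralizers $C_{\gamma_i}$, passing to the corresponding finite cover $L^+$, and noting that a suitable power $\gamma_i^d$ then lies in the center of $\pi_1(L^+)$; Lawson--Yau's center theorem is applied there, to a central $\mathbb{Z}^{r-1}$. Your sketch never produces normality or centrality, so the splitting theorem does not apply even if the commutation claim were salvaged.
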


\textbf{Related works.} 
In \cite{tonkonog-higher-maslov-potentials}, Tonkonog uses SFT neck-stretching techniques to obtain a similar formula to (\ref{super-potential-formula-introduction}), involving certain structure constants of the symplectic cohomology $SH^*(Y\backslash D_Y)$ see \cite[Theorem 1.3]{tonkonog-higher-maslov-potentials}. The approach we follow uses fairly primitive techniques of $J$-holomorphic curve theory instead, it allows for divisors that are not anti-canonical (we only require $c_1(Y)\geq D_Y$), and Lagrangians that are not necessarily monotone. These generalizations are essential for our intended applications. We also note that the deformation argument we employ suggests an interpretation of Tonkonog's constants $c_{i,k}$ as relative Gromov-Witten invariants (see Figure \ref{intro-figure}). Making this connection rigorous, however, seems to require major technical work that goes beyond the scope of this article.

In \cite{logpss}, Ganatra and Pomerleano use their LogPSS map to explain how the non-vanishing of certain Gromov-Witten invariants of $Y$ with constraints on $D_Y$ yields a \emph{quasi-dilation} on $SH^*(Y\backslash D_Y)$, and hence (by Viterbo restriction), on the free loop space homology of any graded exact Lagrangian brane $L_Y\subseteq Y\backslash D_Y$. These quasi-dilations impose serious topological restriction when $L_Y$ has real dimension $3$. Our approach to Theorem \ref{topological constraint} is similar in spirit, but perhaps closer to Fukaya's work in 
\cite{Fukaya-icm}.

\textbf{Acknowledgements.} I want to thank Mark McLean and Nick Sheridan for numerous conversations about this project which have led to major improvements. I want to thank Paul Seidel for suggesting the deformation argument depicted in Figure \ref{intro-figure}, Aleksey Zinger for teaching me about SFT curves in relative GW-theory, Yank\i\ Lekili for the reference \cite{ritter-smith}, and Kenji Fukaya for explaining to me some of the content of \cite{FO3-book1}. This project was partially funded by ERC grant 850713 -- HMS.

%%%%%%%
%\subfile{Tangencies v2}

\section{Discs with a global tangency}
\subsection{Preliminaries}
Let $(Z,I)$ be a smooth Fano variety of complex dimension $n\geq 2$ and let $\omega$ be a K\"ahler form on $Z$. We will sometimes call $(Z,I,\omega)$ a Fano K\"ahler triple. A divisor $D\subseteq\ Z$ is said to be \emph{simple normal crossings} if it is given as a union of smooth divisors $D = \cup_{i=1}^N D_i$ such that for any subset $I\subseteq  \{1,2,\dots,N\}$, the intersection $D_I = \bigcap_{i\in I} D_i$ is transverse. This means that near each point $p\in D_I$, there is a holomorphic chart with coordinates $(z_1,\dots,z_n)$ where
\begin{equation*}
    D_i=\{z_i=0\} \quad\hbox{for all}\ \ i\in I.
\end{equation*}

Let $D \subseteq Z$ be an anti-canonical simple normal crossings divisor and let $\Omega$ be a holomorphic volume form on $Z$ with poles along $D$.

For each closed oriented Lagrangian $L\subseteq Z\backslash D$, one can associate two Maslov classes $\mu_L\in H^2(Z,L)$ and $\eta_L\in H^1(L)$ which we now recall.
\begin{itemize}
    \item[-] \underline{The class $\mu_L$}: Given a map $u:\disk\rightarrow (Z,L)$, choose a generic section $s$ of the complex line bundle $\bigwedge^n_{\mathbb{C}} u^*TZ\rightarrow \mathbb{D}$ such that the restriction $s_{|\partial \mathbb{D}}$ agrees with the orientation of $\bigwedge^n_{\mathbb{R}}TL$. Then, $$\mu_L(u)\defeq 2\# s^{-1}(0).$$
    \item[-] \underline{The class $\eta_L$}: For each $p\in L$, there is a unique complex number $\gamma(p)\in\mathbb{C}^*$ such that $\gamma(p)^{-1}\Omega_p\in \bigwedge^n_{\mathbb{C}}T_pZ$ agrees with the orientation form on $\bigwedge^n_{\mathbb{R}}T_pL$. The class $\eta_L$ is twice the pullback of the generator $[d\theta]\in H^1(\mathbb{C}^*,\mathbb{Z})$ by the map $\gamma:L\rightarrow \mathbb{C}^*$,
    \begin{equation*}
        \eta_L = 2\gamma^*[d\theta].
    \end{equation*}
\end{itemize}
These two Maslov classes are related by the identity
\begin{equation}\label{Maslov identity}
    \mu_L(u) = 2u\cdot D + \eta_L(\partial u),
\end{equation}
which holds for all disc maps $u:\disk\rightarrow (Z,L).$

\begin{definition}
The Lagrangian manifold $L\subseteq Z\backslash D$ is \emph{admissible} if
\begin{equation}\label{admissibility-assumption}
    \langle \omega, \pi_2(Z\backslash D,L)\rangle = 0 \quad\hbox{and}\quad \eta_L=0.
\end{equation}
\end{definition}
From now on, we refer to $\mu_L$ as the Maslov class unless otherwise stated.
The notion of admissibility is flexible enough to include the following examples:
\begin{itemize} 
    \item[-] Let $Z$ be a toric Fano variety and $M:Z\rightarrow \Delta$ its moment map. Then $M^{-1}(\delta)$ is an admissible Lagrangian torus for all $\delta\in\text{int}(\Delta)$.
    \item[-] Objects of the relative Fukaya category $\text{Fuk}(Z,D)$ are admissible.
\end{itemize}
The purpose of admissibility is to ensure compactness for certain moduli spaces of holomorphic curves that we will construct later. 

Let $N$ be an open neighborhood of $D$ which is disjoint from $L$. For transversality purposes, we need to perturb the complex structure $I$ in the space $\mathscr{J}_N(Z,\omega)$ of almost complex structures $J$ on $Z$ which are $\omega$-compatible, and such that $J_{|N}=I_{|N}$. We fix $N$ for once and for all and we often simply say that
\begin{equation*}
    J = I \quad \quad \text{near}\  D.
\end{equation*}

For each Maslov index $2$ class $\beta\in H_2(Z,L)$, we may define a numerical invariant $m_{0,\beta}(L)\in\mathbb{Z}$ as follows.
Consider the moduli space
\begin{equation}\label{discs no constraint}
    \mathscr{M}(L,\beta)  = \{ v :\disk\rightarrow (Z,L) |\ \overline{\partial}_J v = 0, [v]=\beta\}.
\end{equation}
The pseudo-holomorphic discs in this moduli space are \`a-priori somewhere-injective (see \cite[Theorem A]{Lazzarini-simplediscs}), and therefore regular for a generic choice of $J\in\mathscr{J}_N(Z,\omega)$. Note that when $L$ is admissible, it does not bound non-constant $J$-holomorphic discs of index $0$ for any $J\in \mathscr{J}_N(Z,\omega)$. With that in mind, the integer $m_{0,\beta}(L)$ is the degree of the (pseudo-cycle) evaluation map
\begin{equation*}
    ev:\mathscr{M}_{0,1}(L,\beta)\defeq \mathscr{M}(L,\beta)\times\partial \mathbb{D}/\text{Aut}(\mathbb{D})\rightarrow L.
\end{equation*}
These numerical invariants are often conveniently packaged in a polynomial
\begin{equation*}
    W_L\in \mathbb{C}[H_2(Z,L)]\defeq\left\{a_1q^{\beta_1}+\dotsi+ a_mq^{\beta_m}| a_i\in\mathbb{C},\ \beta_i\in H_2(Z,L)  \right\},
\end{equation*}
where $q$ is a formal parameter.
\begin{definition}\label{super-potential-definition}
The super-potential associated with $L$ is the polynomial
\begin{equation*}
    W_L = \sum_{\mu_L(\beta)=2} m_{0,\beta}(L) q^{\beta},
\end{equation*}
where $q$ is the formal parameter of the ring $\mathbb{C}[H_2(Z,L)]$.
\end{definition}
If $P\in \mathbb{C}[H_2(Z,L)]$ is a polynomial, and $\alpha\in H_2(Z,L)$ is a homology class, we denote by $P[\alpha]$ the coefficient of $P$ in degree $\alpha$.

\subsection{Transversality}
In order to achieve transversality for discs with higher Maslov indices, we need to perturb the $J$-holomorphic equation using domain-dependent almost complex structures $K=(J_z)\in \mathscr{K}_J$,
\begin{equation}\label{perturbation datum}
  \mathscr{K}_J = \{J_z\in \mathscr{J}_N(Z,\omega), z\in \mathbb{D}\ |\ J_z = J \hbox{ for all } z\in \{0\}\cup\partial\mathbb{D}\}. 
\end{equation}
The perturbed $\overline{\partial}$-equation for disc maps $u:\disk\rightarrow (Z,L)$ is then
\begin{equation}\label{pseudo-holomorphic-equation}
    \partial_s u + J_{s,t}(u)\partial_t u =0,
\end{equation}
where $(s,t)$ are the real and imaginary parts of the holomorphic coordinate $z\in\mathbb{D}$. Observe that near the divisor $D$, the equation above reduces to an honest $I$-holomorphic equation. The choice of a pair $(J,K)$, where $K= (J_z)\in\mathscr{K}_J$, is called a perturbation datum. For each such datum, non-constant solutions of  (\ref{pseudo-holomorphic-equation}) have energy
\begin{equation*}
    E(u)\defeq\frac{1}{2}\int_{\mathbb{D}}\abs{du}^2_{J_{s,t}}ds\wedge dt  = \int_{\mathbb{D}} u^*\omega > 0.
\end{equation*}

\begin{definition}
Let $l\geq 0$ be an integer. A disc map $u:\disk \rightarrow (Z,L)$ is said to be tangent to $D$ to order $l$ at $0\in\mathbb{D}$ if, near $u(0)\in Z$, the divisor $D$ is the (reduced) zero locus $(f=0)$ of a holomorphic function $f$ such that
\begin{equation*}
    f(u(z)) = O(z^l).
\end{equation*}
When this holds, we use the notation
\begin{equation*}
j^D_{0,l-1}(u)=0.    
\end{equation*}
\end{definition}

For each $l\geq 0$, define the tangency moduli space
\begin{equation}\label{tangency moduli space}
\mathscr{T}^D_{l} = \{u:\disk\rightarrow (Z,L) \ |\ \partial_s u + J_{s,t}(u)\partial_t u = 0,\ j^D_{0,l-1}(u)=0\}.
\end{equation}
For example, $\mathscr{T}^D_{0}$ is the space of unconstrained pseudo-holomorphic discs with boundary on $L$.
We will later restrict to a fixed homology class $\alpha\in H_2(Z,L)$ and work with the subspace
\begin{equation*}
 \mathscr{T}^D_{l}(\alpha) = \{u\in \mathscr{T}^D_{l} \ |\ [u]=\alpha\}.
\end{equation*}
\begin{remark}\label{remark about notation}
It would be more correct if we denoted the space  (\ref{tangency moduli space}) above by $\mathscr{T}_l(Z,L,J,(J_z))$. For the sake of clarity, we avoid this notation when there is little risk of confusion.
\end{remark}

Suppose for a moment that $D$ is smooth. Then, the jet maps $j^D_{0,l}$ have a simple geometric interpretation which we now explain. Let $\sigma$ be a section of the line bundle $\mathscr{O}_Z(D)$, whose zero locus is $D$. The jet map $j^D_{0,0}$ should be thought of as a section of the the line bundle
\begin{equation}
ev_0^*\mathscr{O}_Z(D)\rightarrow \mathscr{T}^D_0: \quad   j^D_{0,0}=ev_0^*\sigma,
\end{equation}
where $ev_0:\mathscr{T}^D_0\rightarrow Z$ is evaluation at $0\in\mathbb{D}$. The zero locus of this section is $\mathscr{T}^D_1$ as defined above. The restriction of the line bundle $ev_0^*\mathscr{O}_Z(D)$ to $\mathscr{T}^D_1$ is the normal line bundle $N_{D/Z}$, i.e.
\begin{equation}\label{normals}
    (ev_0^*\mathscr{O}_Z(D))_u = T_{u(0)}Z/T_{u(0)}D,
\end{equation}
for all $u\in \mathscr{T}^D_1$. The jet map $j^D_{0,1}$ is a section of the line bundle in (\ref{normals}). Indeed, in a neighborhood $U$ of $u(0)\in Z$, the section $\sigma$ gives a local defining equation $f: U\rightarrow \mathbb{C}$ for the divisor $D$. The function $f\circ u : \mathbb{D}\rightarrow\mathbb{C}$ has an analytic expansion near $0\in\mathbb{D}$,
\begin{equation*}
    f(u(z)) = \lambda z + o(z).
\end{equation*}
The $1$-jet map at $u$ may be interpreted as the normal vector
\begin{equation}\label{jet map formula}
    j^D_{0,1}(u)=(df)_{u(0)}^{-1}(\lambda) \in N_{D/Z,u(0)}.
\end{equation}
The later is independent of the choice of a holomorphic (local) defining equation for the smooth divisor $D$. The higher jets can be interpreted as sections
\begin{equation*}
   j^D_{0,l} : \mathscr{T}^D_{l} \rightarrow N_{D/Z} 
\end{equation*}
in exactly the same way.

Following the general strategy outlined in \cite[(9k)]{PL}, transversality for the spaces $\mathscr{T}^D_{l}$ can be achieved using a generic domain dependent perturbation datum $(J_z)
\in \mathscr{K}_J$. The complications of the tangency constraint are resolved in the work of Cieleback and Mohnke in \cite[\S 6]{CM-transversality}, where it is shown inductively that for each $l$, there is a comeagre set of perturbation data $(J_z)$ for which $\mathscr{T}^D_{l}$ is smooth. Moreover, for each $u\in \mathscr{T}^D_{l} $, the vertical derivative
\begin{equation*}
     d_uj^D_{0,l} : T_u\mathscr{T}^D_{l} \rightarrow \mathbb{C} 
\end{equation*}
is surjective, see in particular \cite[Proposition 6.9]{CM-transversality}. The main difference in our setup is that we allow domain dependent perturbations, so we need not restrict to somewhere-injective discs for transversality purposes.

When $D$ is simple normal crossings with components $D_i$, the tangency order $l$ has a contribution $l_i\geq 0$ from each divisor $D_i$ which can all be organized in a \emph{tangency vector} $\mathbf{v}=(l_1,\dots,l_N)$, such that $\sigma(\mathbf{v})\defeq l_1+\dots+l_n=l$. For each tangency vector $\mathbf{v}$, set 
\begin{align*}
    \mathscr{T}_{\mathbf{v}} = \left\{u\in\mathscr{T}^D_l \ \ \big{|}\ \  j^{D_i}_{0,l_i-1}(u)=0\ \hbox{for}\ 0\leq i \leq N\right\}
    = \bigcap_{i=1}^N \mathscr{T}^{D_i}_{l_i}.
\end{align*}

Note that one has an inclusion $\mathscr{T}_{\mathbf{v}}\rightarrow \mathscr{T}_{\mathbf{v}'}$ whenever $\mathbf{v}\leq \mathbf{v}'$.
Transversality for these moduli spaces is achieved inductively on the multiplicity vector $\mathbf{v}$ using the same methods of \cite[\S 6]{CM-transversality}, see also \cite[Lemma 4.15]{logpss}.
\begin{lemma}\label{transversality}
For each $J\in \mathscr{J}_N(Z,\omega)$, there is a comeagre set of domain-dependent perturbations $(J_z)\in \mathscr{K}_{J}$ for which all the spaces $\mathscr{T}_{\mathbf{v}}(\alpha)$ are smooth manifolds. The associated dimension is given by the Riemann-Roch formula
\begin{equation*}
    \dim\mathscr{T}_{\mathbf{v}}(\alpha) = n+\mu_L(\alpha) - 2\sigma(\mathbf{v}).
\end{equation*}
\end{lemma}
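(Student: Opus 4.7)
The plan is to prove smoothness and the dimension formula simultaneously by induction on the tangency vector $\mathbf{v}\in\mathbb{Z}^N_{\geq 0}$ with respect to the componentwise partial order, closely following the scheme of Cieliebak--Mohnke \cite[\S 6]{CM-transversality}.

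For the base case $\mathbf{v}=\mathbf{0}$, I would set up the universal moduli space of pairs $(u,(J_z))$ satisfying the perturbed equation (\ref{pseudo-holomorphic-equation}), where $(J_z)$ ranges over a Banach completion of $\mathscr{K}_J$. The crucial point is that because $(J_z)$ is genuinely domain-dependent, a variation of the perturbation supported near any interior point $z\in\mathbb{D}$ with $du(z)\neq 0$ produces an arbitrary variation of the linearized operator at that point. This bypasses somewhere-injectivity entirely and yields surjectivity of the linearization of the universal $\overline{\partial}$-operator; a standard Sard--Smale argument then delivers a comeagre set of perturbations for which $\mathscr{T}_{\mathbf{0}}(\alpha)$ is a smooth manifold of Riemann--Roch dimension $n+\mu_L(\alpha)$.

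For the inductive step, assume smoothness has been established for all $\mathbf{v}'<\mathbf{v}$. Choose an index $i$ with $l_i\geq 1$ and set $\mathbf{v}'=\mathbf{v}-e_i$. As explained around (\ref{normals})--(\ref{jet map formula}), the jet map
\[ j^{D_i}_{0,l_i-1}:\mathscr{T}_{\mathbf{v}'}(\alpha)\longrightarrow N_{D_i/Z} \]
is a smooth section of a rank-one complex vector bundle over $\mathscr{T}_{\mathbf{v}'}(\alpha)$, whose zero locus is precisely $\mathscr{T}_{\mathbf{v}}(\alpha)$. It therefore suffices to show that for generic $(J_z)$ this section is transverse to the zero section; combined with the induction hypothesis, this yields that $\mathscr{T}_{\mathbf{v}}(\alpha)$ is smooth of real codimension two in $\mathscr{T}_{\mathbf{v}'}(\alpha)$, giving the claimed dimension $n+\mu_L(\alpha)-2\sigma(\mathbf{v})$.

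The crux of the argument is surjectivity of the universal vertical derivative of $j^{D_i}_{0,l_i-1}$, which I would obtain by adapting \cite[Proposition 6.9]{CM-transversality}. The key observation is that variations of $(J_z)$ supported in arbitrarily small punctured neighborhoods of $z=0$ (which are admissible, since the constraint $J_z=J$ pins down the perturbation only at the single point $z=0$ and on $\partial\mathbb{D}$) can be used to prescribe the first nonvanishing Taylor coefficient of a local defining function $f\circ u$ at $0$, while preserving the lower-order tangency data encoded by $\mathbf{v}'$. Applying Sard--Smale to the universal moduli space and intersecting the resulting comeagre sets over the countable collection of pairs $(\mathbf{v},\alpha)$ completes the induction. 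The main obstacle I anticipate is verifying that the pointwise normalization $J_0=J$ does not obstruct the jet-perturbation step at each inductive stage; this is essentially the content of Cieliebak--Mohnke's calculation, and it extends to the simple normal crossings setting because near each stratum $D_I$ one can perturb separately along each local branch, so the jet maps $j^{D_i}_{0,l_i-1}$ can be made transverse one divisor at a time.
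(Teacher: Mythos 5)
Your overall scheme---induction on the tangency vector, reducing each step to transversality of a jet section, with $\mathscr{T}_{\mathbf{0}}(\alpha)$ handled by the standard Sard--Smale argument for domain-dependent perturbations---matches the approach the paper takes by citing \cite[\S 6]{CM-transversality} and \cite[Lemma 4.15]{logpss}, and the dimension count is correct.

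However, the mechanism you describe for the inductive step cannot work as stated. You propose using variations of $(J_z)$ ``supported in arbitrarily small punctured neighborhoods of $z=0$,'' but this ignores the second constraint defining $\mathscr{J}_N(Z,\omega)$: every $J_z$ is required to agree with $I$ on the fixed neighborhood $N$ of $D$. Since $u(0)\in D$, continuity forces $u(z)\in N$ for all $z$ close to $0$, and the linearized equation at such $z$ only sees $J_z$ at the point $u(z)\in N$, where no perturbation is permitted. Thus variations of the perturbation datum supported near $z=0$ contribute \emph{nothing} to the linearized operator, and they certainly cannot be used to ``prescribe the first nonvanishing Taylor coefficient of $f\circ u$ at $0$.'' The actual mechanism in \cite[Prop.\ 6.9]{CM-transversality} is the opposite: one perturbs $J_z$ only at parameters $z$ for which $u(z)$ has \emph{left} the neighborhood $N$ (such $z$ always exist, e.g.\ near $\partial\mathbb{D}$ since $L$ is disjoint from $N$), and then invokes unique continuation / Aronszajn's theorem for the formal adjoint of the linearized Cauchy--Riemann operator to show that any cokernel element annihilating all such far-away perturbations, and also annihilating the jet data at $0$, must vanish identically. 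It is this non-local argument, not a local deformation near $z=0$, that gives surjectivity of the universal vertical derivative of $j^{D_i}_{0,l_i-1}$. Your claim that the ``pointwise normalization $J_0=J$'' is the only constraint to check near $z=0$ is therefore the place where the proposal breaks; the constraint $J_z|_N = I|_N$ is the one that bites, and the fix must come from unique continuation rather than from localized perturbations.
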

The previous transversality result can be understood concretely as follows. Let $\mathbf{v}=(l_1,\dots,l_N)$ be a tangency vector, and let $\mathbf{v}'=(l_1+1,l_2,\dots,l_N)$. For each $u_0\in \mathscr{T}_{\mathbf{v}'}\subseteq \mathscr{T}_{\mathbf{v}}$, the divisors $D_i$ are cut out by equations $\{f_i=0\}$ locally near $u_0(0)\in Z$ (some $f_i$'s may be invertible in this local chart). Assuming $\mathscr{T}_{\mathbf{v}}$ is smooth, there is a well defined smooth map in a neighborhood of $u_0\in \mathscr{T}_{\mathbf{v}}$ given by
\begin{equation*}
   d^{l_1}: \mathscr{T}_{\mathbf{v}} \dashrightarrow \mathbb{C},\quad
    u \mapsto \frac{d^{l_1}f_1(u)}{dz^{l_1}}\bigg|_{z=0}.
\end{equation*}
The content of the previous transversality lemma is that $0\in\mathbb{C}$ is a regular value of the map $d^{l_1}$. The zero-set $(d^{l_1})^{-1}(0)$ therefore provides a local chart for $\mathscr{T}_{\mathbf{v}'}$ near $u_0$. Inductively, one sees in fact that $0\in\mathbb{C}^{\sigma(\mathbf{v})}$ is a regular value of the map $j_{0,\mathbf{v}}: \mathscr{T}_{0} \dashrightarrow \mathbb{C}^{\sigma(\mathbf{v})}$ given by
\begin{equation}\label{big jet function}
    j_{0,\mathbf{v}}(u) = \left(j_{0,l_1-1}(f_1(u)),\dots,j_{0,l_N-1}(f_N(u))\right),
\end{equation}
where we have used the notation
\begin{equation*}
    j_{0,d-1}(h)=(h(0),h'(0),\dots,h^{(d-1)}(0))\in\mathbb{C}^d
\end{equation*}
for functions $h:\mathbb{D}\rightarrow \mathbb{C}$ which are holomorphic near $0$.
%%%%%%%%%%%%%%%%%%%%%
%%%%%%%%%%%%%%%%%%%%%
\subsection{Unfolding tangency points}
We now explain how a disc which is maximally tangent to $D$ to order $r$ can be deformed to a nearby disc with $r$ transverse intersection points with $D$.
Let $\alpha \in H_2(Z,L)$ be an integral class for which
\begin{equation}\label{constraint on alpha}
    r\defeq\frac{1}{2}\mu_L(\alpha) = \alpha\cdot D\geq 2.
\end{equation}
We associate with $\alpha$ the tangency vector
\begin{equation*}
    \mathbf{v}= (\alpha\cdot D_1,\dots,\alpha\cdot D_N).
\end{equation*}
Let $s_i\in H^0(Z,\mathscr{O}_Z(D_i))$ be a holomorphic section whose zero locus is $D_i$. For each $u\in \mathscr{T}_0(\alpha)$, we can pullback the pair $(\mathscr{O}_Z(D_i),s_i)$ in order to produce a holomorphic line bundle $u^*\mathscr{O}_Z(D_i)$ over $\mathbb{D}$, with a \emph{smooth} section $s_{i,u} = u^*s_i$ that satisfies the following properties:
\begin{itemize}
    \item[-] The section $s_{i,u}$ is nowhere vanishing along the boundary $\partial\mathbb{D}$.
    \item[-] Near its zeroes, $s_{i,u}$ is holomorphic. 
    \item[-] The count $\#s_{i,u}^{-1}(0)$ of zeroes with multiplicity is $r_i\defeq\alpha\cdot D_i$.
\end{itemize}
As a consequence, for each $i=1,\dots,N$, we have a globally defined map
\begin{equation}\label{Phi map components}
    \Phi_{i,\alpha}: \mathscr{T}_0(\alpha) \rightarrow \Sym^{r_i}(\mathbb{D}),\quad
    u\mapsto s_{i,u}^{-1}(0).
\end{equation}
These maps are smooth because of the argument principle, see equation (\ref{argument principle}) below. These maps are also the component of
\begin{equation}\label{Phi map}
    \Phi_{\alpha}:\mathscr{T}_0(\alpha) \rightarrow \Sym^{\mathbf{v}}(\mathbb{D}),
\end{equation}
where we have used the notation $\mathbf{v}=(r_1,\dots,r_N)$ and 
\begin{equation*}
    \Sym^{\mathbf{v}}(\mathbb{D})\defeq \Sym^{r_1}(\mathbb{D})\times\dots\times \Sym^{r_N}(\mathbb{D}).
\end{equation*}

\begin{lemma}\label{transversality near 0}
The map $\Phi_{\alpha}$ is regular above $0\in\Sym^{\mathbf{v}}(\mathbb{D})$.
\end{lemma}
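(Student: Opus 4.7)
The plan is to identify $\Phi_\alpha^{-1}(0)$ with the maximally tangent stratum $\mathscr{T}_{\mathbf{v}}(\alpha)$, and then to compare $d_{u_0}\Phi_\alpha$ with the jet differential $d_{u_0}j_{0,\mathbf{v}}$ at every $u_0$ in this preimage. The equation $\Phi_\alpha(u)=0$ forces all $r_i=\alpha\cdot D_i$ zeros of each section $s_{i,u}$ to coincide at $0\in\mathbb{D}$, which is exactly $j^{D_i}_{0,r_i-1}(u)=0$; thus $\Phi_\alpha^{-1}(0)=\mathscr{T}_{\mathbf{v}}(\alpha)$ for $\mathbf{v}=(r_1,\dots,r_N)$. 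I would then reduce surjectivity of $d_{u_0}\Phi_\alpha$ onto $T_0\Sym^{\mathbf{v}}(\mathbb{D})\cong\mathbb{C}^{\sigma(\mathbf{v})}$ to surjectivity of $d_{u_0}j_{0,\mathbf{v}}$, which is guaranteed by Lemma \ref{transversality}.

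To set this up, fix $u_0\in\mathscr{T}_{\mathbf{v}}(\alpha)$, pick local holomorphic defining equations $f_i$ for $D_i$ near $u_0(0)\in Z$, and identify a neighborhood of the diagonal point $r_i\cdot 0\in \Sym^{r_i}(\mathbb{D})$ with $\mathbb{C}^{r_i}$ via the coefficients of the associated monic polynomial of degree $r_i$. The next step is the Weierstrass preparation theorem, applied smoothly in $u$: for $u$ near $u_0$, one obtains a unique factorization
\begin{equation*}
  f_i(u(z))=Q_{i,u}(z)\cdot g_{i,u}(z),\qquad Q_{i,u}(z)=z^{r_i}+\sum_{j=0}^{r_i-1}a_{i,j}(u)\,z^{j},
\end{equation*}
with $g_{i,u}(0)\neq 0$ and $a_{i,j}(u_0)=0$, since $u_0$ has tangency of order exactly $r_i$ to $D_i$ at $0$. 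In these coordinates, the $i$-th component of $\Phi_\alpha$ reads $u\mapsto(a_{i,0}(u),\dots,a_{i,r_i-1}(u))$.

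The last step is to differentiate this factorization along a variation $\xi\in T_{u_0}\mathscr{T}_0(\alpha)$. Since $Q_{i,u_0}(z)=z^{r_i}$, the term $Q_{i,u_0}\cdot\partial_\xi g_{i,u}$ contributes only Taylor monomials of degree $\geq r_i$ in $z$, so equating coefficients in degrees $k=0,1,\dots,r_i-1$ yields the triangular system
\begin{equation*}
  \partial_\xi c_{i,k}=\sum_{j=0}^{k} g_{i,u_0,k-j}\,\partial_\xi a_{i,j},
\end{equation*}
where $c_{i,k}(u)$ is the $k$-th Taylor coefficient of $f_i(u(z))$ at $z=0$ and $g_{i,u_0,l}$ that of $g_{i,u_0}$. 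This is lower triangular with nonzero diagonal $g_{i,u_0}(0)$, hence invertible; equivalently, $d_{u_0}\Phi_{i,\alpha}$ and $d_{u_0}j^{D_i}_{0,r_i-1}$ differ by an invertible linear isomorphism. Assembling over $i$ shows that $d_{u_0}\Phi_\alpha$ is surjective iff $d_{u_0}j_{0,\mathbf{v}}$ is, and the latter holds by Lemma \ref{transversality}, proving regularity at $0$.

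The main obstacle I anticipate is the smooth-in-$u$ Weierstrass preparation and the correct identification of $T_0\Sym^{r_i}(\mathbb{D})$ at the fully diagonal point with the space of monic-polynomial coefficients; after that, the proof reduces to a triangular linear-algebra computation and an invocation of the transversality lemma already in hand.
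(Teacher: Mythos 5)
Your proposal is correct and follows essentially the same strategy as the paper: identify $\Phi_\alpha^{-1}(0)$ with $\mathscr{T}_{\mathbf{v}}(\alpha)$, then show that $d_{u_0}\Phi_\alpha$ and $d_{u_0}j_{0,\mathbf{v}}$ are related by an invertible (lower-)triangular matrix with diagonal $g_{i,u_0}(0)\neq 0$, and conclude from Lemma~\ref{transversality}. The one point of real technical difference is the choice of chart and how smoothness of the factorization is established: you chart $\Sym^{r_i}(\mathbb{D})$ near $r_i\cdot 0$ by Weierstrass polynomial coefficients and appeal to a smooth-in-$u$ Weierstrass preparation, whereas the paper charts via jets of Blaschke products (Lemma 3.9 of \cite{mypaper1}) and factors $f_i(u_t(z)) = m^i_t(z)\,g_i(t,z)$ with $m^i_t$ a Blaschke product, establishing smoothness of the path $t\mapsto m^i_t$ directly from the argument-principle formula~(\ref{argument principle}), which exhibits the Newton power sums of the roots (and hence their elementary symmetric functions) as visibly smooth in $t$. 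The "obstacle" you flag — justifying the smooth-parameter Weierstrass preparation — is thus exactly the issue the paper's contour-integral device is designed to bypass; you could close that gap either by invoking Malgrange's preparation theorem or by substituting the argument-principle computation as the paper does, and the rest of your triangular linear-algebra argument goes through unchanged.
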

\begin{proof}
Recall that near each $u_0\in\Phi_{\alpha}^{-1}(0)=\mathscr{T}_{\mathbf{v}}(\alpha)\subseteq \mathscr{T}_0(\alpha)$, there is a locally defined smooth function $j_{0,\mathbf{v}}: \mathscr{T}_{0}(\alpha) \dashrightarrow \mathbb{C}^{\sigma(\mathbf{v})}$ whose zero locus provides a smooth local chart for $\Phi_{\alpha}^{-1}(0)$, see (\ref{big jet function}). Moreover, there is a local diffeomorphism $j_{\mathbf{v}}: \Sym^{\mathbf{v}}(\mathbb{D})\rightarrow \mathbb{C}^{\sigma(\mathbf{v})}$ near $0$. It is given by components $j_{r_i}: \Sym^{r_i}(\mathbb{D})\rightarrow \mathbb{C}^{r_i}$ which are defined by
\begin{equation*}
    j_{r_i}([(a_1,\dots,a_{r_i})])= j_{0,r_i-1}\left(\prod_{j=1}^{r_i}\frac{z-a_j}{1-\overline{a}_{j}z} \right),
\end{equation*}
see Lemma 3.9 of \cite{mypaper1} for a proof. We will show that the derivatives $d_{u_0}j_{0,\mathbf{v}}$ and $d_{u_0}(j_{\mathbf{v}}\circ\Phi_{\alpha})$ are related by an invertible matrix. Since the former is surjective, then so is the latter. 

Let $u_t\in \mathscr{T}_0({\alpha})$ be a smooth path of pseudo-holomorphic discs through $u_0$. Let $f_i$ be a local defining equation for $D_i$ near $u_0$. Then for each $t$, there is a unique product decomposition
\begin{equation*}
    f_i(u_t(z)) = m^i_t(z)\times g_i(t,z),
\end{equation*}
where $g_i(t,z)$ is nowhere vanishing, and
\begin{equation*}
    m^i_t(z) = \prod_{j=1}^{r_i}\frac{z-a_j(t)}{1-\overline{a}_j(t)z}
\end{equation*}
is product of M\"obius transformations. By the argument principle,
\begin{equation} \label{argument principle}
    \sum_{j=1}^{r_i} a_j(t)^k = \frac{1}{2\pi i}\oint_{\gamma} z^k\frac{(f\circ u_t)'(z)}{f(u_t(z))} dz,
\end{equation}
 where $\gamma$ is the sum the boundaries of small discs around each $a_j(t)$. It follows that the path $m^i_t(z)$ is smooth in $t$. Finally, since $m_0^i(z)=z^{r_i}$,
 \begin{align*}
    \frac{d}{dt}\Bigg|_{t=0} j_{0,r_i-1}\left( f(u_t(z)) \right) 
    &= j_{0,r_i-1}\left(\frac{d}{dt}\Bigg|_{t=0}m^i_t(z) g_i(0,z)\right)\\
    &= j_{0,r_i-1}\left(\frac{d}{dt}\Bigg|_{t=0}m^i_t(z)\right)M(g_i),
\end{align*}
where $M(g_i)$ is an upper triangular matrix whose diagonal elements are all equal to $g_i(0,0)\neq 0$. Combining this computation for all $i=1,\dots,N$, we sees that $d_{u_0}j_{0,\mathbf{v}}$ and $d_{u_0}(j_{\mathbf{v}}\circ\Phi_{\alpha})$ are related by an upper triangular matrix with non-zero elements on its diagonal, hence invertible.
\end{proof}

In contrast with the previous lemma, when $u$ has transverse intersections with $D$, its regularity can be checked by means of classical transversality methods, i.e. the regularity of an appropriate evaluation map.

\begin{lemma}\label{transversality by evaluation}
Let $\mathbf{z}=(z_i)\in \mathbb{D}^r$ be an ordered collection of pair-wise distinct interior points. Then $[\mathbf{z}]\in \Sym^{\mathbf{v}}\mathbb{D}$ is regular for $\Phi_{\alpha}$ if and only if the evaluation map
\begin{equation*}
    ev_\mathbf{z}: \mathscr{T}_0(\alpha)\rightarrow Z^r
\end{equation*}
is transverse to $D^r\subseteq Z^r$.
\end{lemma}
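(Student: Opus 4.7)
The plan is to exploit the fact that pairwise distinct $z_i$ correspond to transverse intersections of $u$ with $D$, so that near $[\mathbf{z}]$ the symmetric product $\Sym^{\mathbf{v}}(\mathbb{D})$ is genuinely a manifold and the zero-locus map $\Phi_\alpha$ is simply ``track each intersection point''. The main computation is then to recognize that, up to invertible rescalings coming from transversality, the differential $d_u\Phi_\alpha$ is exactly the normal-bundle projection of $d_u\,ev_{\mathbf{z}}$.

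Concretely, I would first split $\mathbf{z}=(\mathbf{z}_1,\dots,\mathbf{z}_N)$ with $\mathbf{z}_i\in\mathbb{D}^{r_i}$ according to the factorisation $\Sym^{\mathbf{v}}(\mathbb{D})=\prod_i \Sym^{r_i}(\mathbb{D})$, and fix, for each $u_0\in\Phi_\alpha^{-1}([\mathbf{z}])$, pairwise disjoint small discs $U_j\subset\mathbb{D}$ around the $z_j$ together with local holomorphic defining equations $f_{i(j)}$ for the relevant $D_{i(j)}$ near $u_0(z_j)$. Because $\mathbf{z}$ has pairwise distinct entries, $[\mathbf{z}]$ admits a holomorphic chart in $\Sym^{\mathbf{v}}(\mathbb{D})$ obtained by tracking the unique zero of each $f_{i(j)}(u(z))$ in $U_j$; in this chart, a neighbourhood of $[\mathbf{z}]$ is identified with $\prod_j U_j$ and $T_{[\mathbf{z}]}\Sym^{\mathbf{v}}(\mathbb{D})\cong\bigoplus_j T_{z_j}\mathbb{D}$.

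Next, for a smooth path $u_t\in\mathscr{T}_0(\alpha)$ with $u_0=u$ and $v=\partial_t u_t|_{t=0}$, the implicit function theorem applied to $f_{i(j)}(u_t(z))$ gives the variation of the zero as
\begin{equation*}
    \dot z_j \;=\; -\,\frac{df_{i(j)}\bigl(v(z_j)\bigr)}{\tfrac{d}{dz}\bigl(f_{i(j)}\circ u\bigr)(z_j)},
\end{equation*}
and the denominator is nonzero precisely because $u$ meets $D_{i(j)}$ transversally at $z_j$. On the other hand, the composition of $d_u\,ev_{\mathbf{z}}$ with the projections $T_{u(z_j)}Z\twoheadrightarrow N_{D_{i(j)}/Z,u(z_j)}$ is, in the same local trivialisations, the map $v\mapsto\bigl(df_{i(j)}(v(z_j))\bigr)_j$.

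Thus $d_u\Phi_\alpha$ and the normal-projected $d_u\,ev_{\mathbf{z}}$ differ only by the invertible diagonal rescaling given by the nonzero scalars $-1/\tfrac{d}{dz}(f_{i(j)}\circ u)(z_j)$. Surjectivity of one is therefore equivalent to surjectivity of the other, which by definition is the statement that $[\mathbf{z}]$ is regular for $\Phi_\alpha$ and that $ev_{\mathbf{z}}$ is transverse to the appropriate product of divisors, respectively. The only subtle point is the coordinate identification in the first paragraph; once one is convinced that the ``track the zeros'' chart is a bona fide smooth chart on $\Sym^{\mathbf{v}}(\mathbb{D})$ near $[\mathbf{z}]$ (which is standard because the $z_j$ are distinct), the rest is a bookkeeping exercise with the implicit function theorem.
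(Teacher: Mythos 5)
Your proof is correct and takes essentially the same approach as the paper: both relate $d_{u_0}\Phi_\alpha$ to the normal projection of $d_{u_0}\,ev_{\mathbf{z}}$ by an invertible diagonal matrix, the paper via a smooth factorization $f(u_t(z)) = g_t(z)\prod_i(z-z_i(t))$ with $g_t$ nowhere vanishing (mirroring Lemma \ref{transversality near 0}) and you via the implicit function theorem, which are the same computation in different presentations. The only point you should make explicit rather than assert is the nonvanishing of $\tfrac{d}{dz}(f_{i(j)}\circ u_0)(z_j)$: it holds because the $z_j$ lying on $D_{i(j)}$ are pairwise distinct and their number equals the full intersection count $r_{i(j)}=\alpha\cdot D_{i(j)}$, so each is forced to be a \emph{simple} zero of $f_{i(j)}\circ u_0$, from which the transversality of $u_0$ to $D_{i(j)}$ at $z_j$ follows automatically rather than being a hypothesis.
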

\begin{proof}
Note that any $u_0\in \Phi_{\alpha}^{-1}([\mathbf{z}])\subseteq \mathscr{T}_0(\alpha)$ intersects the divisor $D$ in its smooth locus, so the statement of the lemma makes sense.
We choose a local (near each $u_0(z_i)$) defining equation $f$ for the divisor $D$. We can again compare the two derivatives
\begin{align}
    d_{u_0}\Phi_{\alpha}: T_{u_0}\mathscr{T}_0(\alpha) &\rightarrow T_{[\mathbf{z}]} \Sym^r\mathbb{D} = \bigoplus_{i=1}^r T_{z_i}\mathbb{D},\label{two-derivatives-1}\\
    d_{u_0} ev_\mathbf{z}: T_{u_0}\mathscr{T}_0(\alpha) &\rightarrow \bigoplus_{i=1}^r T_{u_0(z_i)}Z/T_{u_0(z_i)}D \xrightarrow[]{df} \mathbb{C}^r.\label{two-derivatives-2}
\end{align}
Let $u_s$ be a small deformation of $u_0$. Following the same ideas of Lemma \ref{transversality near 0}, there is factorization
\begin{equation*}
    f(u_t(z)) = g_t(z)\prod_{i=1}^r (z-z_i(t))
\end{equation*}
which is smooth in $t$, and such that each $g_t$ is nowhere vanishing. By direct computation,
\begin{equation*}
    df\left(\frac{d}{dt}\Bigg|_{t=0} u_t(z_i)\right) = (-1)^r z_i'(0)\prod_{j\neq i} (z_j-z_i) g_0(z_i).
\end{equation*}
It follows that the two derivatives (\ref{two-derivatives-1}) and (\ref{two-derivatives-2}) are related by an invertible diagonal matrix.
\end{proof}

\subsection{Compactness}
To simplify the study of compactness (see also Remark \ref{why fix p and chi} below), we fix an angle $\chi\in \partial\mathbb{D}$ which is distinct from all roots of unity. We emphasize that all of our discs are stabilized by the parametrization given to $\mathbb{D}\subseteq \mathbb{C}$. An equivalent interpretation is to view them stabilized by the choice of the pair $(0,\chi)\in (\mathbb{D},\partial \mathbb{D})$. 

Let $z_1=(\zeta,\zeta^2,\dots,\zeta^r)\in\mathbb{D}^{\sigma(\mathbf{v})}$ be the ordered collection of $r^{\text{th}}$-roots of unity, and we denote by $[z_1]$ the corresponding element in $\Sym^{\mathbf{v}}(\mathbb{D})$.
We think of $z_1$ as the product $\boldsymbol{\zeta}_1\times\dotsi\times \boldsymbol{\zeta}_N$ of $N$ vectors, where $\boldsymbol{\zeta}_i\in \mathbb{D}^{r_i}$.

We fix an almost complex perturbation $J\in \mathscr{J}_N(Z,\omega)$ such that the following moduli spaces 
\begin{gather*}
  \mathscr{T}_r^{s}(Z,D,p) = \{\hbox{simple } u: \mathbb{P}^1 \rightarrow Z|\ \overline{\partial}_J u = 0,\ u(\infty)=p,\ j_{0,r-1}^D u = 0 \},\\
   \mathscr{M}^{s}_{\mathbf{z}_1}(Z,D,p)= \{\hbox{simple }u:\mathbb{P}^1 \rightarrow Z|\ \overline{\partial}_J u = 0,\ u(\infty)=p,\ u(\boldsymbol{\zeta_i})\subseteq D_i\},\\
   \mathscr{M}^{\text{s.i}} = \{u:\disk\rightarrow (Z,L)\ |\ \overline{\partial}_J u = 0,\ u \hbox{ is somewhere-injective}\}
\end{gather*}
are all Fredholm regular. We refer to \cite[\S 4.4]{wendl-notes} and \cite[Lemma 6.7]{CM-transversality} for a proof that such $J$ exists. Next, we choose a domain-dependent $(J_z)\in\mathscr{K}_{J}$ such that the moduli spaces $\mathscr{T}_{\mathbf{v}}(\alpha)$ are also Fredholm regular. We also fix a point $p\in L$ that is transverse to all the (countably many) evaluation maps of discs at the boundary point $\chi$, or at the roots of unity. In particular, we have regularity for the spaces
\begin{equation}\label{tangency with homology class}
    \mathscr{T}_{\mathbf{v}}^{\chi}(\alpha) = \{ u \in \mathscr{T}_\mathbf{v}(\alpha)  \ | \ u(\chi) = p \}%\quad \hbox{for all tangency vectors } \mathbf{v}.
\end{equation}
for all tangency vector $\mathbf{v}$. 

The study of compactness is fairly tricky. The main issue is the appearance of disc bubbles that are (\`a-priori) not regular, or the appearance of spherical bubbles that sink into the divisor $D$. While there are techniques in the literature that deal with both of these issues, we choose to bypass them entirely as they do not arise in our intended applications. From now on, we assume that $L\subseteq Z\backslash D$ is admissible.

\begin{definition}
The pseudo-index $j_Z$ of a Fano variety $(Z,I)$ is the smallest Chern number of an $I$-holomorphic map $u:\mathbb{P}^1\rightarrow Z$. Alternatively,
\begin{equation*}
    j_Z= \min\{-C\cdot K_Z \ | \ C\subseteq Z\ \hbox{ an algebraic curve}\}.
\end{equation*}
\end{definition}

\begin{lemma}\label{no disc bubbles}
 Let $u_k\in\mathscr{T}^{\chi}_0(\alpha)$ be a sequence of discs such that $\Phi_{\alpha}(u_k)$ converges in the interior of $\Sym^{\mathbf{v}}(\mathbb{D})$ (see (\ref{Phi map})). Then, the sequence $(u_k)$ does not exhibit disc bubbles.
\end{lemma}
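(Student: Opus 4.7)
I would argue by contradiction. By Gromov compactness, after passing to a subsequence, I may assume the $u_k$ converge to a stable bubble tree consisting of a principal disc component together with a collection of disc and sphere bubbles. Suppose, for contradiction, the tree contains at least one non-constant disc bubble $w\colon(\mathbb{D},\partial\mathbb{D})\to(Z,L)$. My aim is to derive a contradiction from the hypothesis that $\Phi_\alpha(u_k)$ stays in the interior of $\Sym^{\mathbf{v}}(\mathbb{D})$, using admissibility of $L$.

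Since disc bubbles can only attach to other disc components at boundary nodes, the bubble $w$ corresponds, on the level of the original domain, to a sequence of rescalings concentrating at some boundary point $p\in\partial\mathbb{D}$. Standard bookkeeping in Gromov compactness (intersections are conserved and distributed among components) shows that every intersection point of $w$ with $D$ in the interior of its domain is accounted for by a sequence of intersection points $z_k\in u_k^{-1}(D)$ with $z_k\to p$ in $\mathbb{D}$. By the hypothesis, $\Phi_\alpha(u_k)$ converges in the open part of $\Sym^{\mathbf{v}}(\mathbb{D})$, so the points of $\Phi_\alpha(u_k)$ stay uniformly bounded away from $\partial\mathbb{D}$. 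Consequently there are no such $z_k\to p$, and the homological intersection number $w\cdot D$ vanishes.

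Because $J=I$ on a neighborhood of $D$ and $D$ is a complex subvariety, positivity of intersection for $J$-holomorphic maps upgrades $w\cdot D=0$ to the geometric statement that the image of $w$ is actually disjoint from $D$. Thus $w$ represents a class in $\pi_2(Z\setminus D,L)$, and the admissibility condition (\ref{admissibility-assumption}) forces $\int_{\mathbb{D}} w^*\omega = 0$, i.e.\ $E(w)=0$. The rescaling limit $w$ is $J$-holomorphic (the domain-dependent perturbation is lost in the rescaling, and in any case $J_z=J$ on $\partial\mathbb{D}$), so $E(w)=0$ implies $w$ is constant, contradicting its non-triviality as a bubble. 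The same argument applies to any disc bubble in the tree, even those of higher depth, since their attaching points also correspond to sequences $z_k\to\partial\mathbb{D}$ in the original domain.

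The main technical point is the conservation of intersection numbers under Gromov convergence and the identification of intersections absorbed by a boundary bubble with sequences of intersection points accumulating at the corresponding boundary attaching point; once this is in place, positivity of intersection together with admissibility finish the argument in one line. I don't expect the argument to be genuinely hard, since admissibility was precisely designed to make this compactness step automatic.
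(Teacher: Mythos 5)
Your proposal is correct and follows essentially the same route as the paper's proof. Both arguments run: the disc bubble concentrates at a boundary point, the hypothesis that $\Phi_{\alpha}(u_k)$ stays in the interior of $\Sym^{\mathbf{v}}(\mathbb{D})$ forces the bubble to have no intersections with $D$, and then admissibility makes the bubble energy zero, a contradiction. The paper is slightly more explicit on the rescaling mechanism (invoking Hofer's Lemma and McDuff--Salamon Lemma 4.6.5, and using the argument principle applied to $f\circ v_k$ to show the limit avoids $D$), while you are more explicit on the final step (spelling out that positivity of intersection plus $\langle\omega,\pi_2(Z\backslash D,L)\rangle=0$ forces $E(w)=0$, and noting that the domain-dependent perturbation disappears in the rescaling since $J_z=J$ along $\partial\mathbb{D}$); but these are presentational differences, not mathematical ones.
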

\begin{proof}
We follow the same arguments that appear in \cite[\S 4.6]{mcduff-salamon}, especially the proof of Lemma 4.6.5. We fix an identification $\mathbb{D}\simeq \mathbb{H}$ to keep our notation consistent with this reference.

If a disc bubble arises, then there is a sequence $\xi_k\in \text{int}(\mathbb{H})$ converging to $\xi\in\partial \mathbb{H}$ such that 
$c_k \defeq \abs{du_k(\xi_k)} \rightarrow \infty$. The sequence $(\xi_k)$ is constructed from Hofer's Lemma (4.6.4 in \cite{mcduff-salamon}), which also provides a sequence $\epsilon_k\rightarrow 0$ such that $\epsilon_k c_k\rightarrow \infty$. The disc bubble $v_{\infty}$ then arises as the uniform $C^{\infty}$-limit of the re-scaled pseudo-holomorphic maps
\begin{equation*}
    v_k(z) = u_k\left(\xi_k +\frac{z}{c_k}\right) \quad \text{defined for}\ z\in B_{\epsilon_k c_k}(\xi_k)\cap\mathbb{H}\subseteq \mathbb{H}.
\end{equation*}

Because $L\cap D =\emptyset$, the disc bubble $v_{\infty}$ cannot be entirely inside of $D$. Since $\Phi_{\alpha}(u_k)$ converges in the interior of $\Sym^{\mathbf{v}}(\mathbb{D})$, the maps $v_k$ have no intersection with $D$ when $k$ is sufficiently large. Hence, the limit $v_{\infty}$ has no intersection with $D$ either (this is an application of the argument principle to $f\circ v_k$, where $f$ is a local defining equation for $D$). We deduce that
\begin{equation}\label{contradiction 1}
    v_{\infty}\cdot D = 0,
\end{equation}
which contradictions the admissibility assumption \eqref{admissibility-assumption} on $L$.
\end{proof}

\begin{lemma}\label{proper map}
 Suppose that $r\leq j_Z$. Let $\mathbf{z}\in\mathbb{D}^{r}$ be a vector whose components are pair-wise distinct. Then, the fiber $\Phi_{\alpha}^{-1}([\mathbf{z}])$ of the map
 \begin{equation}\label{Phi map 2}
 \Phi_{\alpha}:\mathscr{T}_0^{\chi}(\alpha)\rightarrow\text{Sym}^{\mathbf{v}}(\mathbb{D})
 \end{equation}
 is compact.
\end{lemma}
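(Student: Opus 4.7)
The plan is to combine Gromov compactness with a homological count of intersections with $D$: Lemma \ref{no disc bubbles} will eliminate disc bubbles because $[\mathbf{z}]$ lies in the interior of $\Sym^{\mathbf{v}}(\mathbb{D})$, and the hypothesis $r \leq j_Z$ together with the admissibility of $L$ will eliminate sphere bubbles.

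Concretely, I would start with a sequence $u_k \in \Phi_\alpha^{-1}([\mathbf{z}])$ and extract, via Gromov compactness, a limiting stable map $u_\infty$ consisting of a principal disc component $u_0^\infty$ of class $\alpha_0 \in H_2(Z,L)$ with $u_0^\infty(\chi) = p$, together with sphere bubbles $v_1,\dots,v_m$ of classes $\beta_j \in H_2(Z)$ attached at interior points $\xi_j \in \mathbb{D}$. Since $D$ is anti-canonical, $c_1(\beta_j) = v_j \cdot D$, and conservation of intersection with $D$ reads
\[
r = u_0^\infty \cdot D + \sum_{j=1}^m v_j \cdot D.
\]
After ruling out bubbles contained entirely in $D$ (using that $J = I$ near $D$, so any such bubble is an $I$-holomorphic sphere in some smooth component $D_i$), positivity of intersection yields $v_j \cdot D \geq j_Z$ for every non-constant bubble. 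Under $r \leq j_Z$, even a single sphere bubble would force $u_0^\infty \cdot D = 0$, with exactly one bubble $v_1$ saturating $v_1 \cdot D = r = j_Z$. Then $u_0^\infty$ maps into $Z \setminus D$, so the admissibility condition $\langle \omega, \pi_2(Z \setminus D, L)\rangle = 0$ makes its symplectic energy vanish and $u_0^\infty$ becomes the constant map at $p \in L$.

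The final step is to derive a contradiction from the hypothesis that the $z_i$ are pair-wise distinct. Since $u_0^\infty$ is constant and avoids $D$, applying the argument principle to $f \circ u_k$ on a small disk around each $z_i$ (in the spirit of equation (\ref{argument principle}) used in the proof of Lemma \ref{transversality near 0}) shows that every $z_i$ must coincide in the limit with a bubble point. Because there is only one bubble point $\xi_1$, all $r$ components of $\mathbf{z}$ would have to equal $\xi_1$, contradicting the distinctness assumption. Hence no sphere bubbles occur, the limit $u_\infty$ is a genuine element of $\Phi_\alpha^{-1}([\mathbf{z}])$, and the fiber is compact.

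The main anticipated obstacle is the sub-step of ruling out bubbles lying entirely inside a component $D_i$ of $D$: the pseudo-index hypothesis concerns $I$-holomorphic curves in the ambient $Z$, so one must separately check (using that $D_i$ is smooth and sits inside the Fano variety $Z$) that an $I$-holomorphic sphere contained in $D_i$ still contributes at least $j_Z$ to the intersection count, or alternatively exploit the fact that the sequence $u_k$ is transverse to $D$ at $\mathbf{z}$ to exclude this pathology directly.
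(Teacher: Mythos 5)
Your proof is correct and follows essentially the same line as the paper: exclude disc bubbles via Lemma \ref{no disc bubbles}, use $r \leq j_Z$ to show that a sphere bubble would be the unique non-constant component (forcing the principal disc to be constant), and derive a contradiction from the $r\geq 2$ pair-wise distinct points constrained to lie on $D$. One clarification: your ``anticipated obstacle'' about bubbles contained in a component $D_i$ is not actually an issue, and the attribution to positivity of intersection is off — the estimate $v_j\cdot D = c_1(v_j)\geq j_Z$ comes from $D$ being anti-canonical plus the definition of pseudo-index, both of which are homological statements valid for any non-constant $I$-holomorphic sphere in $Z$ whether or not it lies in $D$; positivity of intersection is only needed later, to conclude that $u_0^\infty\cdot D=0$ forces $u_0^\infty$ to land in $Z\setminus D$ so that admissibility applies.
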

\begin{proof}
Disc bubbles are excluded by Lemma \ref{no disc bubbles}. If a sequence $(u_k)\in \Phi_{\alpha}^{-1}([\mathbf{z}])$ is not $C^1$-bounded, then a spherical bubble occurs and, because of the assumption $r\leq j_Z$, it must be the only non-constant component of the Gromov limit. But elements of $\Phi_{\alpha}^{-1}([\mathbf{z}])$ have $r\geq 2$ points constrained to $D$. In particular, the fundamental component can't be constant and that's a contradiction.
\end{proof}

\begin{lemma}\label{compactness for tangency discs}
 Suppose $r\leq j_Z$. Then, the moduli space $\mathscr{T}_{\mathbf{v}}^{\chi}(\alpha)$ is compact.
\end{lemma}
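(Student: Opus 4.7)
The plan follows the template of Lemma \ref{proper map}, modified to handle the degenerate case where all $r$ intersections with $D$ coincide at $z=0$. Take a sequence $(u_k)\subseteq\mathscr{T}^\chi_\mathbf{v}(\alpha)$ and pass to a Gromov limit. Since $\Phi_\alpha(u_k)=[\mathbf{0}]$ lies in the interior of $\Sym^\mathbf{v}(\mathbb{D})$, Lemma \ref{no disc bubbles} already excludes disc bubbles, so only sphere bubbling can obstruct compactness.

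As $D$ is anti-canonical and $Z$ is Fano, each non-constant sphere $v$ in the limit satisfies $v\cdot D=c_1(v)\geq j_Z\geq r$. The total intersection of the limit with $D$ equals $\alpha\cdot D=r$, so at most one non-constant sphere can appear, it has Chern number exactly $r$, and it is simple (a $k$-fold cover with $k\geq 2$ would have underlying Chern $r/k<r\leq j_Z$, contradicting the minimality of $j_Z$). By the Maslov identity $\mu_L=2(\cdot\, D)$ from admissibility, the remaining disc components in the limit have total Maslov $0$, hence are all constant; in particular the main component is constant at $u_\infty(\chi)=p\in L$. Since $p\notin D$ and all $D$-intersections of $u_k$ concentrate at $z=0$, a rescaling analysis forces the sphere bubble to form at $0$ and absorb the tangency data as a single order-$r$ tangency; relabeling the attachment point as $\infty$, the bubble lies in $\mathscr{T}^s_r(Z,D,p)$.

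For our regular choice of $J$, however, $\mathscr{T}^s_r(Z,D,p)$ is a manifold of Fredholm dimension $2c_1(v)-2r=0$, and it carries a free $\mathbb{C}^*$-action by reparametrizations fixing $\{0,\infty\}$: any nontrivial $\lambda$ stabilizing a simple non-constant sphere would force it to factor through a real $1$-dimensional quotient of $\mathbb{P}^1$, which is impossible for a non-constant holomorphic map. A $0$-dimensional manifold cannot carry a free $2$-dimensional Lie group action, so $\mathscr{T}^s_r(Z,D,p)=\emptyset$, ruling out the sphere bubble. Hence $(u_k)$ has a subsequence converging smoothly to a limit $u_\infty$; since the constraints $u_\infty(\chi)=p$ and $j^D_{0,\mathbf{v}-\mathbf{1}}(u_\infty)=0$ are closed, $u_\infty\in\mathscr{T}^\chi_\mathbf{v}(\alpha)$. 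The main obstacle is the tangency-localization step in the second paragraph — carefully verifying that the bubble inherits the order-$r$ tangency at a single point, which is the sort of relative-Gromov-compactness statement that needs care but is standard.
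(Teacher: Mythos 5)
Your proof follows essentially the same route as the paper's, which also reduces compactness to the non-existence of a spherical bubble $u_\infty\in\mathscr{T}^s_r(Z,D,p)$ attached to a constant disc at $z=0$, and then kills that bubble by observing that it would belong to a regular $0$-dimensional moduli space admitting a non-trivial $\mathbb{C}^*$-action. You supply a bit more detail than the paper in a few places — the explicit argument that the sphere bubble is simple (a $k$-fold cover with $k\ge 2$ would force underlying Chern $r/k<j_Z$), the Fredholm dimension count $2c_1(v)-2r=0$, and the verification that the $\mathbb{C}^*$-action is free on simple non-constant spheres — all of which the paper states more tersely or leaves implicit. The one step you flag at the end, namely the relative Gromov-compactness claim that the order-$r$ tangency is inherited by the bubble as a single tangency at the node, is also not elaborated in the paper (it refers back to Lemma~\ref{proper map} and Lemma~\ref{spherical bubble}, where the analogous localization-at-$0$ claim is argued by noting that $u_k(t_k\zeta^i)\in D$, $D\cap L=\emptyset$, and $t_k\to 0$); your acknowledgement of this point is appropriate. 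One minor caveat worth making explicit: a $0$-dimensional manifold can in principle carry a non-trivial group action with fixed points, so the conclusion $\mathscr{T}^s_r(Z,D,p,\alpha)=\emptyset$ really rests on the specific element $u_\infty$ having trivial stabilizer (which you do argue via simplicity), together with regularity — the latter holds because $J$ was fixed at the outset so that $\mathscr{T}^s_r(Z,D,p)$ is Fredholm regular.
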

\begin{proof}
We use a similar argument to the proof of Lemma \ref{proper map}. If a sequence $(u_k)\in\mathscr{T}_{\mathbf{v}}^{\chi}(\alpha)$ is not $C^1$-bounded, then its Gromov limit is a constant disc $\mathscr{M}_{\chi}$ attached at $0\in\mathbb{D}$ to a spherical bubble 
\begin{equation*}
    u_{\infty} \in \{ u\in \mathscr{T}_r^{s}(Z,D,p)\ |\ [u]=\alpha\}.
\end{equation*}
Such a rational curve is automatically simple (it has minimal Chern number) and hence regular. However, it belongs to a regular $0$-dimensional moduli space that carries a non-trivial $\mathbb{C}^*$-action, so it cannot exist.
\end{proof}

\begin{definition}
The tangency number $\tau^D_{\alpha}(L)$ is the signed count of the elements in the moduli space $\mathscr{T}_{\mathbf{v}}^{\chi}(\alpha)$.
\end{definition}
Just like the invariants $m_{0,\beta}(L)$, the tangency numbers $\tau^D_{\alpha}(L)$ do not depend on the choice of a generic $J\in\mathscr{J}_N(Z,\omega)$, or the perturbation datum $(J_z)\in \mathscr{K}_J$. This is because all non-constant pseudo-holomorphic discs on $L$ have a positive Maslov number.

\begin{example}
In \cite{mypaper1}, we compute the tangency number $\tau^D_{\alpha}(L)$ for the Clifford torus $L_{\cl}\subseteq \mathbb{P}^2$ relative to the toric boundary $\Delta=\{x_0x_1x_2=0\}$, and also relative to an elliptic curve $E$ obtained by smoothing the corners of $\Delta$. For instance, there is a unique class $\alpha\in H_2(Z,L)$ with the intersection numbers
\begin{equation*}
    \alpha\cdot\{x_0=0\}=0,\ \alpha\cdot\{x_1=0\}=1,\  \text{and}\ \alpha\cdot\{x_2=0\}=2.
\end{equation*}
This class has the following tangency numbers:
\begin{equation*}
    \tau_{\alpha}^{\Delta}(L_{\text{cl}})=1,\ \text{and}\ \tau_{\alpha}^{E}(L_{\text{cl}})=3.
\end{equation*}
\end{example}

\begin{remark}\label{why fix p and chi}
Although the space $\mathscr{T}_{\mathbf{v}}(\alpha)$ (without the constraint $u(\chi)=p$) may not be compact, it can be compactified by a codimension $2$ set of spheres attached to constant discs as in the proof of Lemma \ref{compactness for tangency discs}. In the language of \cite{zinger-pseudocycles}, $ev_{\chi}:\mathscr{T}_{\mathbf{v}}(\alpha)\rightarrow L$ is a pseudo-cycle and $\tau^D_{\alpha}(L)$ is its degree.
\end{remark}

Despite being proper above $0\in \text{Sym}^{\mathbf{v}}(\mathbb{D})$, the map $\Phi_{\alpha}$ from (\ref{Phi map 2}) need not be proper near $0$.

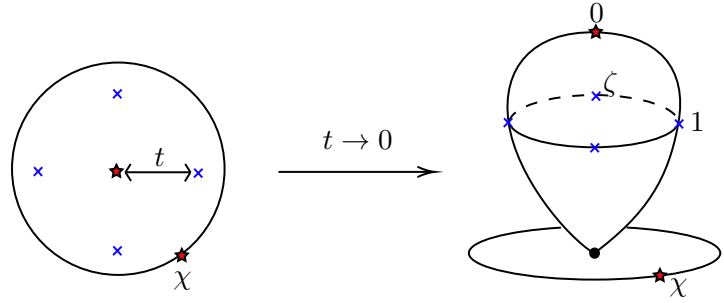
\begin{figure}
    \centering
    \begin{tikzpicture}[x=0.75pt,y=0.75pt,yscale=-.8,xscale=.8]
%uncomment if require: \path (0,341); %set diagram left start at 0, and has height of 341

%Shape: Arc [id:dp670196751894107] 
\draw  [draw opacity=0] (421.44,236.1) .. controls (455.43,237.98) and (480.5,244.5) .. (480.5,252.25) .. controls (480.5,261.48) and (445.02,268.96) .. (401.25,268.96) .. controls (357.48,268.96) and (322,261.48) .. (322,252.25) .. controls (322,244.58) and (346.58,238.11) .. (380.06,236.16) -- (401.25,252.25) -- cycle ; \draw   (421.44,236.1) .. controls (455.43,237.98) and (480.5,244.5) .. (480.5,252.25) .. controls (480.5,261.48) and (445.02,268.96) .. (401.25,268.96) .. controls (357.48,268.96) and (322,261.48) .. (322,252.25) .. controls (322,244.58) and (346.58,238.11) .. (380.06,236.16) ;  
%Curve Lines [id:da5447657719832966] 
\draw    (400.94,252.42) .. controls (434.22,227.36) and (447.69,205.65) .. (454.03,170.58) .. controls (460.37,135.5) and (441.35,113.79) .. (401.73,112.95) .. controls (362.1,112.12) and (342.29,135.5) .. (347.05,169.74) .. controls (351.8,203.98) and (383.5,241.56) .. (400.94,252.42) -- cycle ;
%Shape: Arc [id:dp08439783710332671] 
\draw  [draw opacity=0][dash pattern={on 4.5pt off 4.5pt}] (347.39,169.44) .. controls (348.96,160) and (372.29,152.51) .. (400.83,152.51) .. controls (430.34,152.51) and (454.27,160.52) .. (454.35,170.41) -- (400.83,170.46) -- cycle ; \draw  [dash pattern={on 4.5pt off 4.5pt}] (347.39,169.44) .. controls (348.96,160) and (372.29,152.51) .. (400.83,152.51) .. controls (430.34,152.51) and (454.27,160.52) .. (454.35,170.41) ;  
%Shape: Arc [id:dp03394670487423701] 
\draw  [draw opacity=0] (454.06,170.48) .. controls (451.29,178.9) and (428.51,185.46) .. (400.82,185.46) .. controls (372.19,185.46) and (348.8,178.44) .. (347.37,169.61) -- (400.82,168.76) -- cycle ; \draw   (454.06,170.48) .. controls (451.29,178.9) and (428.51,185.46) .. (400.82,185.46) .. controls (372.19,185.46) and (348.8,178.44) .. (347.37,169.61) ;  
%Shape: Star [id:dp5027692689391787] 
\draw  [fill={rgb, 255:red, 224; green, 16; blue, 16 }  ,fill opacity=1 ] (442.46,262.27) -- (443.62,264.76) -- (446.23,265.16) -- (444.34,267.1) -- (444.79,269.83) -- (442.46,268.54) -- (440.13,269.83) -- (440.58,267.1) -- (438.69,265.16) -- (441.3,264.76) -- cycle ;
%Shape: Ellipse [id:dp9544201846383757] 
\draw  [fill={rgb, 255:red, 7; green, 0; blue, 0 }  ,fill opacity=1 ] (398.48,252.25) .. controls (398.48,250.64) and (399.72,249.33) .. (401.25,249.33) .. controls (402.78,249.33) and (404.02,250.64) .. (404.02,252.25) .. controls (404.02,253.87) and (402.78,255.18) .. (401.25,255.18) .. controls (399.72,255.18) and (398.48,253.87) .. (398.48,252.25) -- cycle ;
\draw  [color={rgb, 255:red, 18; green, 6; blue, 247 }  ,draw opacity=1 ] (398.87,183) -- (403.95,188.55)(404.11,183.04) -- (398.71,188.51) ;
\draw  [color={rgb, 255:red, 18; green, 6; blue, 247 }  ,draw opacity=1 ] (344.19,167.13) -- (349.26,172.69)(349.42,167.17) -- (344.03,172.65) ;
\draw  [color={rgb, 255:red, 18; green, 6; blue, 247 }  ,draw opacity=1 ] (451.97,167.97) -- (457.04,173.52)(457.2,168.01) -- (451.81,173.48) ;
\draw  [color={rgb, 255:red, 18; green, 6; blue, 247 }  ,draw opacity=1 ] (399.66,150.43) -- (404.74,155.99)(404.9,150.47) -- (399.5,155.94) ;
%Shape: Star [id:dp9894070702094082] 
\draw  [fill={rgb, 255:red, 224; green, 16; blue, 16 }  ,fill opacity=1 ] (402.04,108.62) -- (403.21,111.1) -- (405.81,111.5) -- (403.93,113.44) -- (404.37,116.17) -- (402.04,114.88) -- (399.71,116.17) -- (400.16,113.44) -- (398.27,111.5) -- (400.88,111.1) -- cycle ;
%Shape: Circle [id:dp711641291457733] 
\draw   (34,199) .. controls (34,162) and (64,132) .. (101,132) .. controls (138,132) and (168,162) .. (168,199) .. controls (168,236) and (138,266) .. (101,266) .. controls (64,266) and (34,236) .. (34,199) -- cycle ;
%Shape: Star [id:dp448843985485047] 
\draw  [fill={rgb, 255:red, 224; green, 16; blue, 16 }  ,fill opacity=1 ] (141.04,249.62) -- (142.21,252.1) -- (144.81,252.5) -- (142.93,254.44) -- (143.37,257.17) -- (141.04,255.88) -- (138.71,257.17) -- (139.16,254.44) -- (137.27,252.5) -- (139.88,252.1) -- cycle ;
%Shape: Star [id:dp25013899032987275] 
\draw  [fill={rgb, 255:red, 224; green, 16; blue, 16 }  ,fill opacity=1 ] (99.84,196.51) -- (101,199) -- (103.6,199.4) -- (101.72,201.33) -- (102.16,204.07) -- (99.84,202.78) -- (97.51,204.07) -- (97.95,201.33) -- (96.07,199.4) -- (98.67,199) -- cycle ;
\draw  [color={rgb, 255:red, 18; green, 6; blue, 247 }  ,draw opacity=1 ] (148.87,199) -- (153.95,204.55)(154.11,199.04) -- (148.71,204.51) ;
\draw  [color={rgb, 255:red, 18; green, 6; blue, 247 }  ,draw opacity=1 ] (97.87,149) -- (102.95,154.55)(103.11,149.04) -- (97.71,154.51) ;
\draw  [color={rgb, 255:red, 18; green, 6; blue, 247 }  ,draw opacity=1 ] (97.87,248) -- (102.95,253.55)(103.11,248.04) -- (97.71,253.51) ;
\draw  [color={rgb, 255:red, 18; green, 6; blue, 247 }  ,draw opacity=1 ] (47.87,198) -- (52.95,203.55)(53.11,198.04) -- (47.71,203.51) ;
%Straight Lines [id:da3483663019194507] 
\draw    (105.5,201.23) -- (146.5,201.23) ;
%Straight Lines [id:da5996545936731421] 
\draw    (105.5,201.23) -- (109.02,206.73) ;
%Straight Lines [id:da948349713314355] 
\draw    (146.5,201.23) -- (142.98,207) ;
%Straight Lines [id:da5865023625232892] 
\draw    (105.5,201.23) -- (108.81,196) ;
%Straight Lines [id:da3982927829620153] 
\draw    (146.5,201.23) -- (143.19,196) ;
%Straight Lines [id:da19727985213000232] 
\draw    (202,201) -- (298.5,201) ;
\draw [shift={(300.5,201)}, rotate = 180] [color={rgb, 255:red, 0; green, 0; blue, 0 }  ][line width=0.75]    (10.93,-3.29) .. controls (6.95,-1.4) and (3.31,-0.3) .. (0,0) .. controls (3.31,0.3) and (6.95,1.4) .. (10.93,3.29)   ;

% Text Node
\draw (446.67,266.26) node [anchor=north west][inner sep=0.75pt]   [align=left] {$\chi$};
% Text Node
\draw (396.15,93.02) node [anchor=north west][inner sep=0.75pt]   [align=left] {$0$};
% Text Node
\draw (459.55,161.5) node [anchor=north west][inner sep=0.75pt]   [align=left] {$1$};
% Text Node
\draw (404.86,135.61) node [anchor=north west][inner sep=0.75pt]   [align=left] {$\zeta$};
% Text Node
\draw (134,262) node [anchor=north west][inner sep=0.75pt]   [align=left] {$\chi$};
% Text Node
\draw (122,184) node [anchor=north west][inner sep=0.75pt]   [align=left] {$t$};
% Text Node
\draw (228,173) node [anchor=north west][inner sep=0.75pt]   [align=left] {$t\rightarrow 0$};

\end{tikzpicture}
    \caption{domain-stable components of the limit, $r=4$.}
    \label{Gromov limit figure}
\end{figure}
\begin{lemma}\label{spherical bubble}
Suppose that $r\leq j_Z$. Let $(u_k)\in \mathscr{T}_0^{\chi}(\alpha)$ be a sequence such that
\begin{equation*}
    \Phi_{\alpha}(u_k) = t_k[\mathbf{z}_1] = (t_k\zeta,\dots,t_k\zeta^r)\in \text{Sym}^{\mathbf{v}}(\mathbb{D}) ,\quad t_k\rightarrow 0.
\end{equation*}
   If the sequence $(u_k)$ is not $C^1$-bounded, then it converges (after passing to a subsequence) to the constant disc $\mathscr{M}_{\chi}$ attached at $0\in\mathbb{D}$ to a spherical bubble $u_{\infty}$ from the moduli space
\begin{equation}\label{moduli of spherical bubbles}
    \mathscr{M}^{s}_{\mathbf{z}_1}(Z,D,p,\alpha) = \{u\in \mathscr{M}^{s}_{\mathbf{z}_1}(Z,D,p) \ | \ [u]=\alpha\}.
\end{equation}
\end{lemma}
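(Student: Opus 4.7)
The plan is to apply Gromov compactness to $(u_k)$, exclude disc bubbles using Lemma \ref{no disc bubbles}, pin down the combinatorics of the Gromov limit via the bound $r \leq j_Z$ and the Maslov identity (\ref{Maslov identity}), and finally identify the surviving sphere with an element of $\mathscr{M}^s_{\mathbf{z}_1}(Z,D,p,\alpha)$ by rescaling $(u_k)$ around $0 \in \mathbb{D}$ at rate $1/t_k$.

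First, since $\Phi_{\alpha}(u_k) = t_k[\mathbf{z}_1] \to 0$ stays in the interior of $\text{Sym}^{\mathbf{v}}(\mathbb{D})$, Lemma \ref{no disc bubbles} rules out disc bubbling, so after passing to a subsequence Gromov compactness produces a limit consisting of a principal disc (still containing $\chi$ mapping to $p$) together with some sphere bubbles $A_1,\ldots,A_m$. Writing $\alpha = \beta + \sum_j [A_j]$ for the total class decomposition and invoking (\ref{Maslov identity}) with $\eta_L = 0$, one gets
\[ 2r = \mu_L(\alpha) = \mu_L(\beta) + 2\sum_j c_1(A_j). \]
Every non-constant disc component contributes $\mu_L \geq 2$ by admissibility, while every non-constant $J$-holomorphic sphere contributes $c_1 \geq j_Z \geq r$. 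The only decomposition consistent with these inequalities has exactly one non-constant sphere bubble $u_\infty$ with $c_1(u_\infty) = r$ and all the disc components constant. Simplicity of $u_\infty$ follows from the minimality of $j_Z$: a $k$-fold cover would yield a simple class of Chern number $r/k < j_Z$, which is impossible.

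Next, because the principal disc is constant with value $p$ and the tangency points $t_k \boldsymbol{\zeta}_i$ all collapse to $0 \in \mathbb{D}$ where $|du_k|$ must blow up, the sphere bubble $u_\infty$ must attach at $0$. Identifying the resulting node with $\infty \in \mathbb{P}^1$ then gives $u_\infty(\infty) = p$.

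Finally, to pin down the tangency marking, I would use the explicit rescaling $v_k(z) \defeq u_k(t_k z)$, defined on $\{|z| < 1/t_k\}$, whose intersections with $D_i$ lie by construction exactly at $\boldsymbol{\zeta}_i$. Since $J_z = J$ at $z = 0$ and $(J_z)$ is continuous in $z$, the perturbed equation for $v_k$ converges uniformly on compacta to the honest $J$-holomorphic equation. Gromov compactness on compact subsets of $\mathbb{C}$ combined with removable singularity at $\infty$ then yields a simple $J$-holomorphic sphere $\mathbb{P}^1 \to Z$ sending $\infty$ to $p$ and $\boldsymbol{\zeta}_i$ into $D_i$ with the right multiplicities, exhibiting $u_\infty$ as an element of $\mathscr{M}^s_{\mathbf{z}_1}(Z,D,p,\alpha)$. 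The main obstacle is this last step: one has to check that no further bubbling of the $v_k$ takes place on compact subsets of $\mathbb{C}$ and that no energy escapes to $\infty$, so that $1/t_k$ is genuinely the correct rescaling rate. Both points follow from the class count of step two, which has already exhausted all the non-constant components of the Gromov limit of $(u_k)$.
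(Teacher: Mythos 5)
Your proposal is correct and rests on the same ingredients the paper uses: Lemma \ref{no disc bubbles} to exclude disc bubbles, the Maslov identity \eqref{Maslov identity} with $\eta_L=0$ plus $r\leq j_Z$ to force a single non-constant sphere of Chern number $r$ and a constant fundamental disc, and the constraints $u_k(t_k\zeta^i)\in D_i$, $p\in L$, $D\cap L=\emptyset$ to pin the bubble at $0\in\mathbb{D}$. The one place you diverge in presentation is the identification of the marked-point structure on the bubble: the paper reads it off the Deligne--Mumford limit of the marked domains $(\mathbb{D},0,\chi,t_k\boldsymbol{\zeta})$, whereas you reconstruct the same thing by explicitly rescaling $v_k(z)=u_k(t_kz)$, which is the computation the DM convergence packages abstractly. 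Your justification that no further bubbling or energy escape occurs for $v_k$ ``because the class count has already exhausted the non-constant components'' is correct but slightly compressed; the missing half-sentence is that the scale-$t_k$ sphere is itself necessarily non-constant (it must meet each $D_i$ at $\zeta^i$ while its node limits to $p\notin D$), so that any further bubbling would produce a second non-constant sphere in the Gromov limit of $(u_k)$ and contradict the index count.
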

\begin{proof}
The convergence stated above is in the Gromov topology. The marked domains $(\mathbb{D},0,\chi,t_k\zeta,\dots,t_k\zeta^r)$ associated with $(u_k)$ converge (in the Deligne-Mumford space of discs with one boundary marked point and $r+1$ interior marked points) to a disc $\mathbb{D}$ with one boundary marked point $\chi\in\partial\mathbb{D}$ that is attached to a sphere $(\mathbb{P}^1,0,\zeta,\dots,\zeta^r)$. The attachment identifies $0\in\mathbb{D}$ with $\infty\in\mathbb{P}^1$, see Figure \ref{Gromov limit figure}.
By Lemma \ref{no disc bubbles}, the sequence does not exhibit disc bubbles. Furthermore, the assumption $r\leq j_Z$ ensures that if a spherical bubble arises, it will be the only non-constant component. In particular, the fundamental component of the limit is constant. This implies that bubbling occurs at $z=0 \in\mathbb{D}$ because $u_k(t_k\zeta^i)\in D$, $D$ is disjoint from $L$, and $t_k$ converges to $0$.
\end{proof}

We now study compactness of the moduli space $\{(u,t)\ |\ \Phi_{\alpha}(u)=t[z_1]\}$ as $t\rightarrow 1$. Let $D_i$ be a smooth component of the simple normal crossings divisor $D$. For each root of unity $\zeta^k\in\partial \mathbb{D}$ and relative homology class $\beta\in H_2(Z,L)$, consider the moduli space of pseudo-holomorphic discs
\begin{equation*}
    \mathscr{M}^{D_i}_{\zeta^k}(L,\beta) = \{ v :(\mathbb{D},\partial\mathbb{D},0)\rightarrow (Z,L,D_i) |\ \overline{\partial}_J v = 0, v(-\zeta^k)=p,[v]=\beta \}.
\end{equation*}
This moduli space is essentially the same as $\mathscr{T}^{\chi}_{1}(\beta)$ defined in (\ref{tangency with homology class}), except that the $\overline{\partial}$-equation uses a constant $J$ as in (\ref{discs no constraint}).

\begin{lemma}\label{count agrees with classical count}
 Suppose that
 \begin{equation}\label{constraint for small discs}
     \mu_L(\beta)=2 \quad\text{and}\quad \beta\cdot D_i = 1.
 \end{equation}
 Then $\mathscr{M}^{D_i}_{\zeta^k}(L,\beta)$ is a closed oriented manifold of dimension $0$. The count of elements in $\mathscr{M}^{D_i}_{\zeta^k}(L,\beta)$ is $m_{0,\beta}(L)$.
\end{lemma}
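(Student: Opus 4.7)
My plan is to verify that $\mathscr{M}^{D_i}_{\zeta^k}(L,\beta)$ is a compact $0$-dimensional manifold, cut out transversally, and then to exhibit a canonical orientation-preserving bijection between it and the preimage $ev_\chi^{-1}(p)\subseteq \mathscr{M}_{0,1}(L,\beta)$, whose signed count is by definition $m_{0,\beta}(L)$.

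\emph{Dimension and transversality.} By Riemann-Roch the space of parameterized $J$-holomorphic discs in class $\beta$ has virtual dimension $n+\mu_L(\beta)=n+2$. The constraint $v(-\zeta^k)=p$ has codimension $n$ and the constraint $v(0)\in D_i$ has codimension $2$, giving expected dimension $0$. For generic $J\in\mathscr{J}_N(Z,\omega)$ all discs in question are somewhere-injective by Lazzarini's decomposition, so the standard linearization argument makes the combined evaluation $ev_0\times ev_{-\zeta^k}:\mathscr{M}(L,\beta)\to Z\times L$ transverse to $D_i\times\{p\}$ for a generic choice of $(J,p)$; this is exactly the $p$ already fixed in the transversality setup preceding (\ref{tangency with homology class}).

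\emph{Compactness.} Admissibility rules out disc bubbles: by the Maslov identity (\ref{Maslov identity}) together with $\eta_L=0$, every non-constant disc on $L$ satisfies $\mu_L\geq 2$, so a splitting $\beta=\beta_1+\beta_2$ with two non-constant components would force $\mu_L(\beta)\geq 4$. Sphere bubbles are excluded by the Fano pseudo-index hypothesis combined with admissibility: a sphere of Chern number $k\geq 1$ bubbling off leaves a disc of Maslov $2-2k\leq 0$, which must be constant, and the resulting ghost-disc-plus-sphere configuration meeting both $L$ at $p$ and $D_i$ at $0$ is cut out by a codimension $\geq 1$ condition in the parameter space, exactly as in the proofs of Lemmas \ref{no disc bubbles} and \ref{proper map}.

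\emph{Identification with $m_{0,\beta}(L)$.} The key geometric observation is that $\text{Aut}(\mathbb{D})=\text{PSL}(2,\mathbb{R})$ acts simply transitively on pairs $(z_0,\xi)$ with $z_0\in\text{int}(\mathbb{D})$ and $\xi\in\partial\mathbb{D}$. Given any $[v,\xi]\in ev_\chi^{-1}(p)$, positivity of intersections (applicable because $J=I$ near $D$) together with $\beta\cdot D_i=1$ produces a unique transverse interior point $z_0\in\mathbb{D}$ with $v(z_0)\in D_i$. The unique automorphism sending $(z_0,\xi)\mapsto (0,-\zeta^k)$ reparameterizes $v$ to an element of $\mathscr{M}^{D_i}_{\zeta^k}(L,\beta)$, and this assignment is a bijection with the inverse given by forgetting the parametrization. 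Since $\text{Aut}(\mathbb{D})$ is connected and acts by orientation-preserving diffeomorphisms of $\mathbb{D}$, and since the $D_i$-constraint cuts out a complex codimension $1$ locus, the bijection preserves orientations, yielding $\#\mathscr{M}^{D_i}_{\zeta^k}(L,\beta)=m_{0,\beta}(L)$.

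The main obstacle I expect is the exclusion of sphere bubbles in the presence of both the interior divisor constraint and the boundary point constraint; checking that a Chern number one sphere sinking into $D_i$ at $0$ can be avoided by a generic choice of $(J,p)$ is routine but needs to be carried out in view of the additional constraint $v(0)\in D_i$, which a priori makes the locus of such degenerations larger than in the unconstrained problem.
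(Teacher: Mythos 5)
Your proof is correct and follows essentially the same route as the paper: both rest on Lazzarini's theorem for transversality, admissibility to exclude disc bubbles, a dimension count to exclude sphere bubbles, and the simply transitive action of $\text{Aut}(\mathbb{D})$ on pairs consisting of an interior and a boundary point of $\mathbb{D}$, combined with $\beta\cdot D_i=1$ and positivity of intersection, to match $\mathscr{M}^{D_i}_{\zeta^k}(L,\beta)$ with the fiber $ev^{-1}(p)\subset\mathscr{M}_{0,1}(L,\beta)$. The paper merely packages the reparametrization step as a global diffeomorphism $\iota:\mathscr{M}_{D_i}(L,\beta)\to\mathscr{M}_{0,1}(L,\beta)$, $v\mapsto[v,-\zeta^k]$, where $\mathscr{M}_{D_i}(L,\beta)$ drops the boundary point constraint, so that the comparison of counts becomes a comparison of pseudo-cycle degrees, and it defers the orientation comparison to the subsection on orientations rather than arguing via connectedness of $\text{Aut}(\mathbb{D})$ as you do. The concern you flag at the end is not a genuine obstruction: the bubbled sphere is attached to a ghost disc mapped to $p\in L$, so it passes through $p\notin D_i$ and cannot be contained in $D_i$; positivity of intersection then applies, and a simple sphere with $c_1=1$ constrained to pass through $p$ and meet $D_i$ has generic dimension $2c_1-8<0$, which is exactly what the paper's appeal to Lemma \ref{compactness for tangency discs} and Remark \ref{why fix p and chi} amounts to.
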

\begin{proof}
We interpolate between the two counts using the moduli space
\begin{gather*}
  \mathscr{M}_{D_i}(L,\beta) =\{ v :\disk\rightarrow (Z,L) |\ \overline{\partial}_J v = 0, v(0)\in D_i, [v]=\beta\}.
\end{gather*}
Again, by the structure theorem of Lazzarini (see \cite[Theorem A]{Lazzarini-simplediscs}), all discs in this moduli space are \`a-priori somewhere-injective. Thus, for a generic choice of $J$, both moduli spaces must be regular. The point $p\in L$ is (by assumption) transverse to the map
\begin{equation*}
    ev_{-\zeta^k}: \mathscr{M}_{D_i}(L,\beta)\rightarrow L; \quad \mathscr{M}^{D_i}_{\zeta^i}(L,\beta) = ev_{-\zeta^k}^{-1}(p).
\end{equation*}
Using Lemma \ref{compactness for tangency discs} and the remark thereafter, $ev_{-\zeta^ik}$ is a pseudo-cycle whose degree is
\begin{equation*}
    \deg(ev_{-\zeta^k})=\#\mathscr{M}^{D_i}_{\zeta^k}(L,\beta).
\end{equation*}
Recall that $m_{0,\beta}(L)$ is the degree of the pseudo-cycle
\begin{equation*}
    ev:\mathscr{M}_{0,1}(L,\beta)\defeq \mathscr{M}(L,\beta)\times\partial \mathbb{D}/\text{Aut}(\mathbb{D})\rightarrow L.
\end{equation*}
But since $\beta\cdot D_i = 1$, the tautological map
\begin{equation*}
\iota : \mathscr{M}_{D_j}(L,\beta)\rightarrow \mathscr{M}_{0,1}(L,\beta),\quad v\mapsto [v,-\zeta^k]
\end{equation*}
is a diffeomorphism. The lemma now follows from the identity $ev_{-\zeta^k}=ev\circ\iota$.
\end{proof}

\begin{lemma}\label{disc bubbles at t=1}
Let $(u_k)\in \mathscr{T}_0^{\chi}(\alpha)$ be a sequence such that
\begin{equation*}
    \Phi_{\alpha}(u_k) = t_k[\mathbf{z}_1] = (t_k\zeta,\dots,t_k\zeta^r),\quad t_k\rightarrow 1.
\end{equation*}
   Then, $(u_k)$ converges (after passing to a subsequence) to a nodal disc
   \begin{equation}\label{small discs I}
    u_{\infty}\in \mathscr{M}_{\chi}\bigtimes_{i=1}^N\bigtimes_{j=1}^{r_i} \mathscr{M}^{D_i}_{(\boldsymbol{\zeta}_i)_j}(L,\beta^i_j), \quad \alpha = \sum_{i,j}\beta^i_j, \quad \mu_L(\beta^i_j)=2,\quad \beta_i^j\cdot D_i=1
   \end{equation}
   where $(\boldsymbol{\zeta}_i)_j$ is the $j^{\text{th}}$ component of $\boldsymbol{\zeta}_i$.
   The fundamental component of the limit $u_{\infty}$ is a constant disc $\mathscr{M}_{\chi}$. It is attached to a collection of discs $v^i_j\in \mathscr{M}^{D_i}_{(\boldsymbol{\zeta}_i)_j}(L,\beta^i_j)$ at the roots of unity $(\boldsymbol{\zeta}_i)_j\in\partial\mathbb{D}$.
\end{lemma}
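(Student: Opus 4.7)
The plan is to extract a Gromov limit of $(u_k)$, locate where bubbling must occur, and use an index-balance argument together with admissibility to pin down its combinatorial structure. For the first step, the energy $E(u_k) = \int_\alpha \omega$ is constant in $k$, the boundary point $\chi$ is pinned to $p$ (since $(u_k)\in\mathscr{T}_0^{\chi}(\alpha)$), and the domain-dependent perturbation $(J_z)$ equals $J$ on $\{0\}\cup\partial\mathbb{D}$. Hence a subsequence converges in the Deligne--Mumford sense to a stable map $u_\infty$ consisting of a fundamental disc component attached to a finite tree of disc and sphere bubbles. In particular, since bubbling at the roots of unity happens at boundary nodes, any such disc bubble satisfies the honest (constant-$J$) $\overline{\partial}_J$-equation and therefore fits into the moduli spaces $\mathscr{M}^{D_i}_{\zeta^k}(L,\beta)$ used in Lemma \ref{count agrees with classical count}.

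For the second step, the marked points $t_k\zeta^j$ converge to the distinct boundary points $\zeta^j$, and the constraint $u_k(t_k\zeta^j)\in D_{i(j)}$ persists in the limit, where $i(j)$ is the index such that $\boldsymbol{\zeta}_{i(j)}$ contains $\zeta^j$. Since $L\cap D=\emptyset$, if no bubbling occurred at $\zeta^j$ the fundamental component $u_{\infty,0}$ would satisfy $u_{\infty,0}(\zeta^j)\in L\cap D_{i(j)}$, a contradiction. Hence a non-constant disc bubble $v^j$ must appear at each of the $r$ distinct roots $\zeta^j$, absorbing the corresponding interior marked point and its intersection with $D_{i(j)}$.

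Finally I would run a Maslov-index balance to rigidify the configuration. Admissibility of $L$ via \eqref{admissibility-assumption} forces every non-constant $J$-holomorphic disc on $L$ to satisfy $\beta\cdot D\geq 1$, and then the Maslov identity \eqref{Maslov identity} combined with $\eta_L=0$ gives $\mu_L(\beta)=2\beta\cdot D\geq 2$. Writing
\begin{equation*}
2r = \mu_L(\alpha) = \mu_L(u_{\infty,0}) + \sum_{j=1}^r \mu_L(\beta^j) + 2S,
\end{equation*}
with $S\geq 0$ the total Chern number of the spherical components and $\beta^j=[v^j]$, the lower bounds $\mu_L(\beta^j)\geq 2$ saturate the budget: each $\mu_L(\beta^j)=2$, $S=0$, and $\mu_L(u_{\infty,0})=0$. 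Admissibility collapses $u_{\infty,0}$ to the constant disc at $p$, and $\mu_L(\beta^j)=2$ combined with $\beta^j\cdot D_{i(j)}\geq 1$ forces $\beta^j\cdot D=\beta^j\cdot D_{i(j)}=1$, which after relabeling yields precisely the decomposition \eqref{small discs I}. The main obstacle is ruling out hidden bubbling away from the $\zeta^j$ and interior sphere bubbles; the former is excluded by Lemma \ref{no disc bubbles} applied to the complement of the roots of unity (where $\Phi_{\alpha}(u_k)$ converges in the interior of $\Sym^{\mathbf{v}}(\mathbb{D})$ after restricting away from $\zeta^j$), while the latter, along with any extra disc components, is ruled out by the fact that the $r$ boundary bubbles already exhaust the Maslov budget.
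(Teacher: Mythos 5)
Your proof is correct and takes the same approach as the paper's, but it actually spells out what the paper leaves implicit: the paper's proof is the single sentence ``Identical to Lemma~\ref{spherical bubble},'' which silently swaps the $t\to 0$ domain degeneration (one interior sphere bubble at $0$) for the $t\to 1$ degeneration ($r$ boundary disc bubbles at the roots of unity). You supply exactly what that replacement requires: the interior marked points $t_k\zeta^j$ escape to $\partial\mathbb{D}$, each must be absorbed into a non-constant boundary disc bubble because $L\cap D=\emptyset$ rules out the fundamental component carrying the constraint, the perturbation datum degenerates to the constant $J$ along $\partial\mathbb{D}$ so these bubbles lie in the spaces $\mathscr{M}^{D_i}_{\zeta^k}(L,\beta)$ of Lemma~\ref{count agrees with classical count}, and then admissibility together with \eqref{Maslov identity} pins each bubble's index to exactly $2$, leaving no Maslov budget for the fundamental component, additional disc bubbles, or sphere bubbles. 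One small remark: the appeal to Lemma~\ref{no disc bubbles} ``applied to the complement of the roots of unity'' is not literally an invocation of that lemma as stated (its hypothesis requires $\Phi_\alpha(u_k)$ to converge in the interior of $\Sym^{\mathbf{v}}(\mathbb{D})$, which fails here since the limit lies on the boundary), and it is also redundant: once the $r$ forced bubbles account for all of $\mu_L(\alpha)=2r$, any extra non-constant disc or sphere component would overshoot the total, so the budget computation alone suffices. The underlying mechanism — admissibility forcing $\mu\geq 2$ for any non-constant disc — is the same one driving both estimates, so presenting the budget argument alone is cleaner.
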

\begin{proof}
Identical to Lemma \ref{spherical bubble}, see also Figure \ref{gluing map figure}.
\end{proof}

In order to simplify notation, we can forget about the decomposition of $D$ into its irreducible components $D_i$. The limiting element $u_{\infty}$ from the previous lemma can then be thought of as
\begin{equation}\label{small discs II}
    u_{\infty}\in \mathscr{M}_{\chi}\bigtimes_{k=1}^r\mathscr{M}_{\zeta^k}^D(L,\beta_k),
\end{equation}
where the classes $\beta_k$ satisfy the conditions
\begin{equation}\label{small discs}
    \alpha = \beta_1+\dots+\beta_r,\quad \mu_L(\beta_k)=2,\quad \beta_k\cdot D =1.
\end{equation}
The elements of the moduli space (\ref{small discs II}) are essentially the same as those of (\ref{small discs I}), but over-counted with a factor of
\begin{equation*}
    \frac{r!}{r_1!\dotsi r_N!}.
\end{equation*}
\subsection{Counting} 
The final ingredients we need are gluing and orientations. We explain how some of the theory appearing in the literature applies to our setup.
\subsubsection{Gluing} We rely on the work of Fukaya-Oh-Ohta-Ono in \cite{FO+gluing} for gluing analysis, particularly Theorems 3.13 and 8.16.
Let $\beta_1,\dots,\beta_r\in H_2(Z,L)$ be an ordered collection of relative homology classes, satisfying the constrains in (\ref{small discs}). Set
\begin{equation*}
    \alpha = \beta_1+\dots+\beta_r.
\end{equation*}
Then, there is an open embedding
\begin{equation}\label{gluing-map}
   \Psi=(G,t) :(R,\infty) \times \mathscr{M}_{\chi}\bigtimes_{k=1}^r \mathscr{M}^D_{\zeta^k}(L,\beta_k)  \rightarrow \mathscr{T}_0^{\chi}(\alpha)\times (0,1)
\end{equation}
whenever the gluing length $R$ is sufficiently large. The first component $G$ of this map is the result of gluing a collection of discs $v_i\in \mathscr{M}^D_{\zeta^k}(L,\beta_k)$ to the constant disc $\mathscr{M}_{\chi}$ at the roots of unity $\zeta^i$ in the domain of $\mathscr{M}_{\chi}$ and using a gluing length $\rho\in (R,\infty)$, this is the map described in Theorem 3.13 of \cite{FO+gluing}.

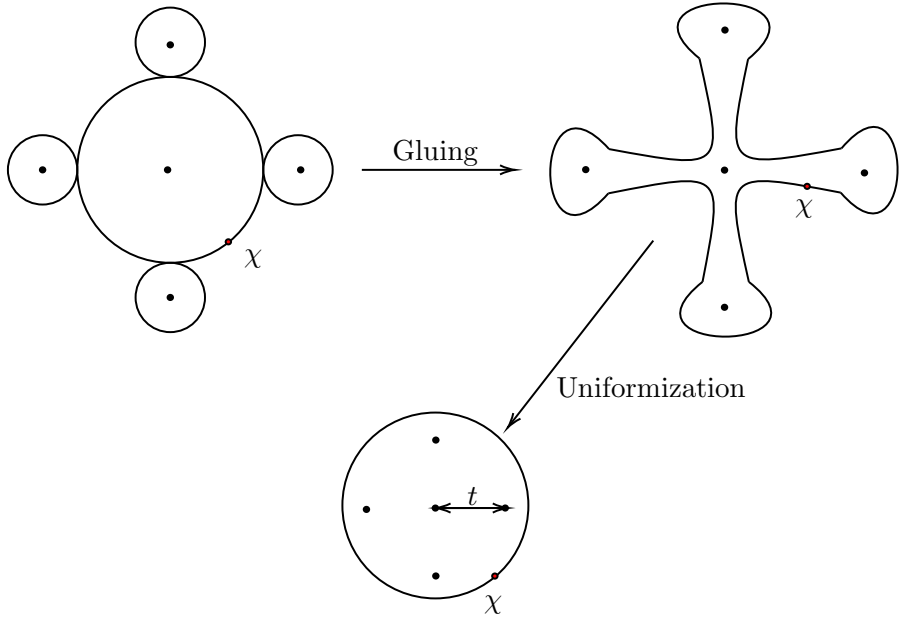
\begin{figure}
    \centering
    \begin{tikzpicture}[x=0.75pt,y=0.75pt,yscale=-.7,xscale=.7]
%uncomment if require: \path (0,487); %set diagram left start at 0, and has height of 487
%Curve Lines [id:da14649726047339073] 
\draw    (458.16,136.03) .. controls (401.98,73.28) and (401.43,232.29) .. (457.61,167.94) ;
%Curve Lines [id:da20434815039881515] 
\draw    (625.6,135.5) .. controls (679.56,71.68) and (679.56,230.17) .. (625.04,167.41) ;
%Curve Lines [id:da33153518235560364] 
\draw    (523.25,70.62) .. controls (457.61,17.97) and (623.38,17.44) .. (558.29,71.15) ;
%Curve Lines [id:da34459496647351995] 
\draw    (525.47,231.76) .. controls (458.72,284.95) and (625.6,283.88) .. (558.85,231.76) ;
%Curve Lines [id:da6626818646493478] 
\draw    (458.16,136.03) .. controls (542.16,151.46) and (541.05,151.46) .. (523.25,70.62) ;
%Curve Lines [id:da8439281549114777] 
\draw    (457.61,167.94) .. controls (541.6,151.46) and (541.05,150.93) .. (525.47,231.76) ;
%Curve Lines [id:da48489601405956106] 
\draw    (558.29,71.15) .. controls (541.6,150.93) and (542.16,150.93) .. (625.6,135.5) ;
%Curve Lines [id:da03985010546958567] 
\draw    (558.85,231.76) .. controls (541.05,151.46) and (541.05,151.46) .. (625.04,167.41) ;
%Shape: Ellipse [id:dp33872091861840614] 
\draw  [fill={rgb, 255:red, 0; green, 0; blue, 0 }  ,fill opacity=1 ] (539.38,151.19) .. controls (539.38,150.16) and (540.25,149.33) .. (541.33,149.33) .. controls (542.4,149.33) and (543.27,150.16) .. (543.27,151.19) .. controls (543.27,152.22) and (542.4,153.05) .. (541.33,153.05) .. controls (540.25,153.05) and (539.38,152.22) .. (539.38,151.19) -- cycle ;
%Shape: Ellipse [id:dp44241072627500055] 
\draw  [fill={rgb, 255:red, 0; green, 0; blue, 0 }  ,fill opacity=1 ] (539.38,250.09) .. controls (539.38,249.06) and (540.25,248.23) .. (541.33,248.23) .. controls (542.4,248.23) and (543.27,249.06) .. (543.27,250.09) .. controls (543.27,251.12) and (542.4,251.95) .. (541.33,251.95) .. controls (540.25,251.95) and (539.38,251.12) .. (539.38,250.09) -- cycle ;
%Shape: Ellipse [id:dp38230950077096204] 
\draw  [fill={rgb, 255:red, 0; green, 0; blue, 0 }  ,fill opacity=1 ] (439.38,150.83) .. controls (439.38,149.8) and (440.25,148.97) .. (441.33,148.97) .. controls (442.4,148.97) and (443.27,149.8) .. (443.27,150.83) .. controls (443.27,151.86) and (442.4,152.69) .. (441.33,152.69) .. controls (440.25,152.69) and (439.38,151.86) .. (439.38,150.83) -- cycle ;
%Shape: Ellipse [id:dp526460062627661] 
\draw  [fill={rgb, 255:red, 0; green, 0; blue, 0 }  ,fill opacity=1 ] (640.19,153.19) .. controls (640.19,152.16) and (641.06,151.33) .. (642.14,151.33) .. controls (643.21,151.33) and (644.08,152.16) .. (644.08,153.19) .. controls (644.08,154.22) and (643.21,155.05) .. (642.14,155.05) .. controls (641.06,155.05) and (640.19,154.22) .. (640.19,153.19) -- cycle ;
%Shape: Ellipse [id:dp054523813397286736] 
\draw  [fill={rgb, 255:red, 0; green, 0; blue, 0 }  ,fill opacity=1 ] (539.68,50.19) .. controls (539.68,49.16) and (540.55,48.33) .. (541.63,48.33) .. controls (542.7,48.33) and (543.58,49.16) .. (543.58,50.19) .. controls (543.58,51.22) and (542.7,52.05) .. (541.63,52.05) .. controls (540.55,52.05) and (539.68,51.22) .. (539.68,50.19) -- cycle ;
%Shape: Ellipse [id:dp152719569397475] 
\draw  [fill={rgb, 255:red, 245; green, 9; blue, 9 }  ,fill opacity=1 ] (598.9,162.89) .. controls (598.9,161.86) and (599.77,161.03) .. (600.85,161.03) .. controls (601.92,161.03) and (602.79,161.86) .. (602.79,162.89) .. controls (602.79,163.92) and (601.92,164.75) .. (600.85,164.75) .. controls (599.77,164.75) and (598.9,163.92) .. (598.9,162.89) -- cycle ;
%Shape: Circle [id:dp13955989067838392] 
\draw   (75,151) .. controls (75,114) and (105,84) .. (142,84) .. controls (179,84) and (209,114) .. (209,151) .. controls (209,188) and (179,218) .. (142,218) .. controls (105,218) and (75,188) .. (75,151) -- cycle ;
%Shape: Circle [id:dp21954030076226339] 
\draw   (209,151) .. controls (209,137.19) and (220.19,126) .. (234,126) .. controls (247.81,126) and (259,137.19) .. (259,151) .. controls (259,164.81) and (247.81,176) .. (234,176) .. controls (220.19,176) and (209,164.81) .. (209,151) -- cycle ;
%Shape: Circle [id:dp4810566254843396] 
\draw   (25,151) .. controls (25,137.19) and (36.19,126) .. (50,126) .. controls (63.81,126) and (75,137.19) .. (75,151) .. controls (75,164.81) and (63.81,176) .. (50,176) .. controls (36.19,176) and (25,164.81) .. (25,151) -- cycle ;
%Shape: Circle [id:dp13948252029741193] 
\draw   (117,243) .. controls (117,229.19) and (128.19,218) .. (142,218) .. controls (155.81,218) and (167,229.19) .. (167,243) .. controls (167,256.81) and (155.81,268) .. (142,268) .. controls (128.19,268) and (117,256.81) .. (117,243) -- cycle ;
%Shape: Circle [id:dp9771325857750515] 
\draw   (117,59) .. controls (117,45.19) and (128.19,34) .. (142,34) .. controls (155.81,34) and (167,45.19) .. (167,59) .. controls (167,72.81) and (155.81,84) .. (142,84) .. controls (128.19,84) and (117,72.81) .. (117,59) -- cycle ;
%Shape: Circle [id:dp9708504190331007] 
\draw   (266,393) .. controls (266,356) and (296,326) .. (333,326) .. controls (370,326) and (400,356) .. (400,393) .. controls (400,430) and (370,460) .. (333,460) .. controls (296,460) and (266,430) .. (266,393) -- cycle ;
%Shape: Ellipse [id:dp026301358311572987] 
\draw  [fill={rgb, 255:red, 245; green, 9; blue, 9 }  ,fill opacity=1 ] (373.9,443.89) .. controls (373.9,442.86) and (374.77,442.03) .. (375.85,442.03) .. controls (376.92,442.03) and (377.79,442.86) .. (377.79,443.89) .. controls (377.79,444.92) and (376.92,445.75) .. (375.85,445.75) .. controls (374.77,445.75) and (373.9,444.92) .. (373.9,443.89) -- cycle ;
%Shape: Ellipse [id:dp8478181438248831] 
\draw  [fill={rgb, 255:red, 245; green, 9; blue, 9 }  ,fill opacity=1 ] (181.9,202.89) .. controls (181.9,201.86) and (182.77,201.03) .. (183.85,201.03) .. controls (184.92,201.03) and (185.79,201.86) .. (185.79,202.89) .. controls (185.79,203.92) and (184.92,204.75) .. (183.85,204.75) .. controls (182.77,204.75) and (181.9,203.92) .. (181.9,202.89) -- cycle ;
%Shape: Ellipse [id:dp12980481332850968] 
\draw  [fill={rgb, 255:red, 0; green, 0; blue, 0 }  ,fill opacity=1 ] (140.05,60.86) .. controls (140.05,59.83) and (140.92,59) .. (142,59) .. controls (143.08,59) and (143.95,59.83) .. (143.95,60.86) .. controls (143.95,61.89) and (143.08,62.72) .. (142,62.72) .. controls (140.92,62.72) and (140.05,61.89) .. (140.05,60.86) -- cycle ;
%Shape: Ellipse [id:dp5708542029027981] 
\draw  [fill={rgb, 255:red, 0; green, 0; blue, 0 }  ,fill opacity=1 ] (48.05,151) .. controls (48.05,149.97) and (48.92,149.14) .. (50,149.14) .. controls (51.08,149.14) and (51.95,149.97) .. (51.95,151) .. controls (51.95,152.03) and (51.08,152.86) .. (50,152.86) .. controls (48.92,152.86) and (48.05,152.03) .. (48.05,151) -- cycle ;
%Shape: Ellipse [id:dp1864283034096459] 
\draw  [fill={rgb, 255:red, 0; green, 0; blue, 0 }  ,fill opacity=1 ] (140.05,243) .. controls (140.05,241.97) and (140.92,241.14) .. (142,241.14) .. controls (143.08,241.14) and (143.95,241.97) .. (143.95,243) .. controls (143.95,244.03) and (143.08,244.86) .. (142,244.86) .. controls (140.92,244.86) and (140.05,244.03) .. (140.05,243) -- cycle ;
%Shape: Ellipse [id:dp7243233333295933] 
\draw  [fill={rgb, 255:red, 0; green, 0; blue, 0 }  ,fill opacity=1 ] (234,151) .. controls (234,149.97) and (234.87,149.14) .. (235.95,149.14) .. controls (237.02,149.14) and (237.89,149.97) .. (237.89,151) .. controls (237.89,152.03) and (237.02,152.86) .. (235.95,152.86) .. controls (234.87,152.86) and (234,152.03) .. (234,151) -- cycle ;
%Shape: Ellipse [id:dp8563100790256428] 
\draw  [fill={rgb, 255:red, 0; green, 0; blue, 0 }  ,fill opacity=1 ] (331.05,394.86) .. controls (331.05,393.83) and (331.92,393) .. (333,393) .. controls (334.08,393) and (334.95,393.83) .. (334.95,394.86) .. controls (334.95,395.89) and (334.08,396.72) .. (333,396.72) .. controls (331.92,396.72) and (331.05,395.89) .. (331.05,394.86) -- cycle ;
%Straight Lines [id:da8124437753437583] 
\draw    (280,151) -- (388.5,151) ;
\draw [shift={(390.5,151)}, rotate = 180] [color={rgb, 255:red, 0; green, 0; blue, 0 }  ][line width=0.75]    (10.93,-3.29) .. controls (6.95,-1.4) and (3.31,-0.3) .. (0,0) .. controls (3.31,0.3) and (6.95,1.4) .. (10.93,3.29)   ;
%Straight Lines [id:da5469138775719233] 
\draw    (490,202) -- (386.23,335.42) ;
\draw [shift={(385,337)}, rotate = 307.87] [color={rgb, 255:red, 0; green, 0; blue, 0 }  ][line width=0.75]    (10.93,-3.29) .. controls (6.95,-1.4) and (3.31,-0.3) .. (0,0) .. controls (3.31,0.3) and (6.95,1.4) .. (10.93,3.29)   ;
%Shape: Ellipse [id:dp39891831540680345] 
\draw  [fill={rgb, 255:red, 0; green, 0; blue, 0 }  ,fill opacity=1 ] (281.38,395.83) .. controls (281.38,394.8) and (282.25,393.97) .. (283.33,393.97) .. controls (284.4,393.97) and (285.27,394.8) .. (285.27,395.83) .. controls (285.27,396.86) and (284.4,397.69) .. (283.33,397.69) .. controls (282.25,397.69) and (281.38,396.86) .. (281.38,395.83) -- cycle ;
%Shape: Ellipse [id:dp08208034047364676] 
\draw  [fill={rgb, 255:red, 0; green, 0; blue, 0 }  ,fill opacity=1 ] (381.38,394.83) .. controls (381.38,393.8) and (382.25,392.97) .. (383.33,392.97) .. controls (384.4,392.97) and (385.27,393.8) .. (385.27,394.83) .. controls (385.27,395.86) and (384.4,396.69) .. (383.33,396.69) .. controls (382.25,396.69) and (381.38,395.86) .. (381.38,394.83) -- cycle ;
%Shape: Ellipse [id:dp015169328765697987] 
\draw  [fill={rgb, 255:red, 0; green, 0; blue, 0 }  ,fill opacity=1 ] (331.38,345.83) .. controls (331.38,344.8) and (332.25,343.97) .. (333.33,343.97) .. controls (334.4,343.97) and (335.27,344.8) .. (335.27,345.83) .. controls (335.27,346.86) and (334.4,347.69) .. (333.33,347.69) .. controls (332.25,347.69) and (331.38,346.86) .. (331.38,345.83) -- cycle ;
%Shape: Ellipse [id:dp5672714611952674] 
\draw  [fill={rgb, 255:red, 0; green, 0; blue, 0 }  ,fill opacity=1 ] (331.38,443.83) .. controls (331.38,442.8) and (332.25,441.97) .. (333.33,441.97) .. controls (334.4,441.97) and (335.27,442.8) .. (335.27,443.83) .. controls (335.27,444.86) and (334.4,445.69) .. (333.33,445.69) .. controls (332.25,445.69) and (331.38,444.86) .. (331.38,443.83) -- cycle ;
%Straight Lines [id:da7754098284550341] 
\draw    (331.05,394.86) -- (383.27,394.83) ;
\draw [shift={(385.27,394.83)}, rotate = 179.96] [color={rgb, 255:red, 0; green, 0; blue, 0 }  ][line width=0.75]    (10.93,-3.29) .. controls (6.95,-1.4) and (3.31,-0.3) .. (0,0) .. controls (3.31,0.3) and (6.95,1.4) .. (10.93,3.29)   ;
%Straight Lines [id:da2615176117119533] 
\draw    (381.38,394.83) -- (335,394.86) ;
\draw [shift={(333,394.86)}, rotate = 359.96] [color={rgb, 255:red, 0; green, 0; blue, 0 }  ][line width=0.75]    (10.93,-3.29) .. controls (6.95,-1.4) and (3.31,-0.3) .. (0,0) .. controls (3.31,0.3) and (6.95,1.4) .. (10.93,3.29)   ;
%Shape: Ellipse [id:dp892530904109639] 
\draw  [fill={rgb, 255:red, 0; green, 0; blue, 0 }  ,fill opacity=1 ] (138.11,151) .. controls (138.11,149.97) and (138.98,149.14) .. (140.05,149.14) .. controls (141.13,149.14) and (142,149.97) .. (142,151) .. controls (142,152.03) and (141.13,152.86) .. (140.05,152.86) .. controls (138.98,152.86) and (138.11,152.03) .. (138.11,151) -- cycle ;

% Text Node
\draw (193,206) node [anchor=north west][inner sep=0.75pt]   [align=left] {$\chi$};
% Text Node
\draw (590,170) node [anchor=north west][inner sep=0.75pt]   [align=left] {$\chi$};
% Text Node
\draw (366,456) node [anchor=north west][inner sep=0.75pt]   [align=left] {$\chi$};
% Text Node
\draw (300,127) node [anchor=north west][inner sep=0.75pt]   [align=left] {Gluing};
% Text Node
\draw (418,298) node [anchor=north west][inner sep=0.75pt]   [align=left] {Uniformization};
% Text Node
\draw (354,378) node [anchor=north west][inner sep=0.75pt]   [align=left] {$t$};

\end{tikzpicture}
    \caption{Gluing map for $r=4$; length of $t$ represents the second component of (\ref{gluing-map}).}
    \label{gluing map figure}
\end{figure}

 When the $r+1$ domain discs are glued, the resulting Riemann surface is biholomorphic to a disc. The choice of a biholomorphism is determined by the interior marked point $0$ and the boundary marked point $\chi$. We call this choice a \emph{uniformization}. The second component of the map (\ref{gluing-map}) tracks how far the new extra marked points are from $0$ in the \emph{uniformization} of the glued (domain) disc, see Figure \ref{gluing map figure}.

Consider the evaluation map
\begin{align*}
  ev:   \mathscr{T}_0^{\chi}\times (0,1) &\rightarrow Z^r\\
        (u,t)&\mapsto (u(t\zeta ),\dots, u(t\zeta^r )).
\end{align*}

\begin{lemma}\label{tranversality near boundary}
When the gluing length $\rho_+\in (R,\infty)$ is sufficiently large, the composition $ev\circ\Psi_{|\rho=\rho_+}$ is transverse to $D^r\subseteq Z^r$.
\end{lemma}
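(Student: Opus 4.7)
The plan is to reduce the transversality of $ev\circ\Psi_{|\rho=\rho_+}$ to $D^r$ to the regularity of $\Phi_\alpha$ via Lemma~\ref{transversality by evaluation}, and then to derive this regularity from the Fukaya--Oh--Ohta--Ono gluing estimates combined with the transverse intersection of each $v_k$ with $D$.

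Fix a glued disc $u_{\rho_+}=G(\rho_+,v_1,\dots,v_r)$ in the image of $\Psi_{|\rho=\rho_+}$. By Lemma~\ref{transversality by evaluation}, transversality at $u_{\rho_+}$ is equivalent to surjectivity of the linearization $d_{u_{\rho_+}}\Phi_\alpha:T_{u_{\rho_+}}\mathscr{T}_0(\alpha)\to T_{[t_{\rho_+}\mathbf{z}_1]}\Sym^{\mathbf{v}}(\mathbb{D})\cong\mathbb{C}^r$. Since $\beta_k\cdot D=1$, each $v_k$ meets $D$ transversely at $0$, and the proof of Lemma~\ref{transversality near 0} applied to the single disc $v_k$ gives that the linearization of $\Phi_{\beta_k}:\mathscr{M}(L,\beta_k)\to\mathbb{D}$ at $v_k$ is surjective onto $T_0\mathbb{D}$.

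Next I would invoke the FOOO gluing estimates (Theorems~3.13 and~8.16 of \cite{FO+gluing}) in two ways. Geometrically, on a small neighborhood of $t_{\rho_+}\zeta^k\in\mathbb{D}$, the glued disc $u_{\rho_+}$ is an $O(e^{-c\rho_+})$-perturbation of a biholomorphic reparametrization of $v_k$ near $0$. Analytically, the linearization of the gluing decomposes $T_{u_{\rho_+}}\mathscr{T}_0(\alpha)$ into contributions from each $v_k$ (supported near the corresponding node) and from the gluing parameters, again up to an $O(e^{-c\rho_+})$ remainder. Under this decomposition, $d_{u_{\rho_+}}\Phi_\alpha$ is approximately block-diagonal, with $k$-th diagonal block coinciding, to leading order, with the surjection established above and with off-diagonal blocks of size $O(e^{-c\rho_+})$.

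The hard part will be controlling these off-diagonal errors in an appropriate operator norm; this is essentially a bookkeeping exercise within the FOOO framework, but the estimates have to be set up carefully so that the exponential decay is seen by the precomposition with the normal-jet maps defining $\Phi_\alpha$. Once the block structure is in place, a standard invertibility-under-small-perturbation argument shows that the diagonal surjectivity forces the total map to remain surjective for $\rho_+$ large, and applying Lemma~\ref{transversality by evaluation} once more yields the claimed transversality of $ev\circ\Psi_{|\rho=\rho_+}$ to $D^r$.
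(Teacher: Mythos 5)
Your plan is sound in spirit, but it takes a considerably harder route than the paper and in the process conflates two distinct transversality statements. The paper's argument is a soft compactness argument: if the lemma failed, there would be a sequence $(\rho_l,(v_k)_l)\in(ev\circ\Psi)^{-1}(D^r)$ with $\rho_l\to\infty$ at which transversality fails; by Gromov compactness the $(v_k)_l$ subconverge, and in the limit $\rho=\infty$ the map $ev\circ\Psi$ becomes the product evaluation $(v_k)\mapsto(v_1(0),\dots,v_r(0))$, which is transverse to $D^r$ because each factor $ev_0$ is transverse to $D$ (this is exactly the Fredholm regularity of the small-disc moduli spaces chosen in \S 2.4). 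Since transversality is an open condition and the gluing map of \cite{FO+gluing} is smooth up to the boundary $\rho=\infty$, this yields a contradiction. There is no need to isolate exponential decay rates or to assemble an approximately block-diagonal linearization; compactness absorbs all of that.

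Your proposal tries to make the implicit continuity explicit via the FOOO error estimates, which is legitimate but amounts to reproving the part of the gluing theorem that the paper uses as a black box. Two points in your write-up deserve caution. First, your appeal to Lemma \ref{transversality by evaluation} is not quite as stated: that lemma relates regularity of $\Phi_\alpha$ at a fixed $[\mathbf{z}]$ to transversality of $ev_{\mathbf{z}}$ on all of $\mathscr{T}_0(\alpha)$, whereas Lemma \ref{tranversality near boundary} is a statement about the restriction of $ev$ to the image of the gluing slice $\Psi_{|\rho=\rho_+}$, in which the points $t(\rho_+,\vec v)\zeta^k$ vary with $\vec v$. These are a priori different conditions (a codimension-one slice versus the full moduli space), and passing between them requires the openness of $\Psi$ as well as control on the $\partial_\rho$-direction; the paper sidesteps this entirely by arguing at $\rho=\infty$, where the two coincide. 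Second, what you call ``a bookkeeping exercise'' is really the place where a wrong norm choice would sink the argument — the compactness route lets you avoid this.

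So: your approach would work if pushed through, but it genuinely differs from the paper's, trading a short compactness-plus-openness argument for an explicit quantitative gluing estimate, and along the way it requires you to patch the mismatch between $ev\circ\Psi_{|\rho=\rho_+}$ and the fixed-point evaluation $ev_{\mathbf z}$ that Lemma \ref{transversality by evaluation} actually concerns.
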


\begin{proof} By Gromov compactness, a sequence $(\rho_l,(v_k)_l) \in (ev\circ\Psi)^{-1}(D^r)$, with $\rho_k\rightarrow \infty$ must have a converging sub-sequence of $(v_k)_l$. Moreover, at $\rho=\infty$, the composition $ev\circ\Psi$ is the product map
\begin{align*}
    \bigtimes_{k=1}^r \mathscr{M}^D_{\zeta^k}(L,\beta_k) &\rightarrow Z^r\\
    (v_k) &\mapsto (v_1(0),\dots, v_r(0)),
\end{align*}
which is transverse to $D^r$.
\end{proof}
\begin{remark}
    Some of the literature uses a gluing \emph{parameter} $\delta = e^{-\rho}\in (0,\epsilon)$ instead of a gluing length $\rho\in (R,\infty)$. One advantage is that the nodal curves at the boundary of the moduli space correspond to $\delta = 0$, as opposed to $\rho = \infty$.
\end{remark}
\subsubsection{Orientations} We now briefly explain how the various moduli spaces we've been studying are oriented. We use the notation $\lambda^{\tp}(V)$ for the top exterior power of a finite dimensional vector space $V$, and $\det(D)$ for the determinant line of a Fredholm operator $D$ between Banach spaces,
\begin{equation*}
    \det(D) \defeq \lambda^{\tp}(\coker(D))^{\vee} \otimes \lambda^{\tp}(\ker(D)).
\end{equation*}

The orientation problem for the moduli space $\mathscr{T}_0(\alpha)$ amounts to choosing trivializations of the determinant lines $\det(D_u)$ depending continuously on $u\in \mathscr{T}_0(\alpha)$, where $D_u$ is the Cauchy-Riemann operator obtained by linearizing equation \eqref{pseudo-holomorphic-equation}. Let us denote the line bundle formed using the lines $\det(D_u)$ by $\underline{\det}_{\alpha}\rightarrow \mathscr{T}_0(\alpha)$. Let $u_{\theta}: S^1\rightarrow \mathscr{T}_0(\alpha)$ be a loop of pseudo-holomoprhic discs. Then, Lemma 11.7 of \cite[(11e)]{PL} shows that because $L$ is orientable, we have
\begin{equation*}
    \langle w_1(\underline{\det}_{\alpha}),[u_{\theta}]\rangle = \langle w_2(L), [T]\rangle,
\end{equation*}
where $T: S^1\times S^1 \rightarrow L$, given by $(\theta_1,\theta_2)\mapsto u_{\theta_1}(\theta_2)$, is the torus swept by the boundaries of the pseudo-holomoprhic discs $u_{\theta}$.

Moreover, the choice of a spin structure on $L$ canonically determines a trivialization of $\underline{\det}_{\alpha}$ across all classes $\alpha$ and these trivializations are consistent with gluing. This is essentially the content of Lemma 11.12 of \cite[(11h)]{PL} and the gluing formulae (11.11) and (11.12) of \cite[(11c)]{PL}. A more detailed description of this orientation process can be found in the proof of \cite[Theorem 8.1.1]{FO3-book2}, and compatibility with gluing is proved in \cite[Lemma 8.3.5]{FO3-book2}. From now on, we assume that $L$ is equipped with a spin structure and we use the induced trivializations $\det(D_u)\cong \mathbb{R}$ on determinant lines.

Using Lemma \ref{transversality} and the usual orientation of $\mathbb{C}$, we inductively obtain orientations
\begin{equation}\label{orientation}
    \lambda^{\tp}T_u\mathscr{T}_{\mathbf{v}}(\alpha) \cong \mathbb{R}
\end{equation}
for the moduli spaces $\mathscr{T}_{\mathbf{v}}(\alpha)$ for various tangency vectors $\mathbf{v}$. Finally, the moduli spaces $\mathscr{T}^{\chi}_{\mathbf{v}}(\alpha)$ are oriented using the short exact sequence
\begin{equation*}
    0\rightarrow T_u\mathscr{T}^{\chi}_{\mathbf{v}}(\alpha)\rightarrow T_u\mathscr{T}_{\mathbf{v}}(\alpha) \xrightarrow[]{d_uev_{\chi}} T_{u(\chi)}L\rightarrow 0,
\end{equation*}
together with the orientation given to $L$. 

The previous discussion covers the orientation problem for the moduli spaces $\mathscr{M}(L,\beta),\mathscr{M}_D(L,\beta)$ and $\mathscr{M}^D_{\zeta^i}(L,\beta)$ from Lemma \ref{count agrees with classical count}. The moduli space 
\begin{equation*}
    \mathscr{M}_{0,1}(L,\beta)\defeq \mathscr{M}(L,\beta)\times\partial \mathbb{D}/\text{Aut}(\mathbb{D})
\end{equation*}
is oriented using the isomorphism
\begin{equation*}
    T_{[(u,\xi)]}\mathscr{M}_{0,1}(L,\beta)\oplus T_{\id}\text{Aut}(\mathbb{D}) \cong T_u\mathscr{M}(L,\beta)\oplus T_{\xi}\mathbb{\partial D}.
\end{equation*}
This orientation does not depend on the choice of lift $(u,\xi)$. Indeed, two such lifts are related by the action of $\text{Aut}(\mathbb{D})$ on $\mathscr{M}(L,\beta)\times\partial \mathbb{D}$, which preserves the orientation described in \eqref{orientation}.

\subsubsection{Homotopy} Let $\mathbf{z}_1=(\zeta,\dots\zeta^r)$ be the collection of $r^{\text{th}}$-roots of unity. By Lemmas \ref{tranversality near boundary}, \ref{transversality by evaluation}, and \ref{transversality near 0}, the points
\begin{equation}
    [\mathbf{z}_t] = t\times [\mathbf{z}_1]
\end{equation}
are regular values of the map $\Phi_{\alpha}: \mathscr{T}_0^{\chi}(\alpha)\rightarrow \Sym^{\mathbf{v}}(\mathbb{D})$ from (\ref{Phi map}), when $t$ is close to $0$ or to $1$. Let $\gamma:[0,1)\rightarrow \Sym^{\mathbf{v}}(\mathbb{D})$ be an embedded path which is disjoint from the big diagonal of $\Sym^{\mathbf{v}}(\mathbb{D})$ (except at $t=0$), and such that
\begin{equation*}
    \gamma(t) = [\mathbf{z}_t] \ \ \text{for}\ t\in[0,\epsilon)\cup(1-\epsilon,1).
\end{equation*}
Then, assuming $\gamma$ is generic, the pre-image $\Phi_{\alpha}^{-1}(\gamma)$ is a smooth oriented $1$-dimensional manifold with boundary.  It is oriented using the isomorphism
\begin{equation}\label{orientation of homotopy moduli}
   \mathbb{R}\langle\partial_t\rangle\otimes \lambda^{\tp} T_{(u,t)}\Phi_{\alpha}^{-1}(\gamma) \cong \det(D_u)
\end{equation}
which in turn is obtained from the short exact sequence
\begin{equation*}
   0\rightarrow T_{(u,t)}\Phi_{\alpha}^{-1}(\gamma) \hookrightarrow T_u \mathscr{T}_0^{\chi}(\alpha)\rightarrow T_{\gamma(t)} \Sym^{\mathbf{v}}(\mathbb{D})/(\partial_t\gamma) \rightarrow 0.
\end{equation*}

Moreover, the projection $\Phi_{\alpha}^{-1}(\gamma)\rightarrow [0,1)$ has compact fibers, see Lemma \ref{proper map}.
Using the compactness results of Lemma \ref{spherical bubble} and Lemma \ref{disc bubbles at t=1}, we deduce that the projection $\Phi_{\alpha}^{-1}(\gamma)\rightarrow [0,1)$ can be extended to a compact oriented $1$-dimensional manifold with boundary $\mathscr{M}_{\alpha}^{[0,1]}\rightarrow [0,1]$ such that 
\begin{gather*}
    \partial_0\mathscr{M}_{\alpha}^{[0,1]} = \mathscr{T}^{\chi}_{\mathbf{v}}(\alpha)\sqcup (\mathscr{M}_{\chi}\times\mathscr{M}^{s}_{\mathbf{z}_1}(Z,D,p,\alpha)),\quad \hbox{and}\\
    \partial_1\mathscr{M}_{\alpha}^{[0,1]} = \cup \bigg\{\bigtimes_{i=1}^N\bigtimes_{j=1}^{r_i} \mathscr{M}^{D_i}_{(\boldsymbol{\zeta}_i)_j}(L,\beta^i_j)\ |\   \alpha=\sum_{i,j}\beta_j^i\bigg\}.
\end{gather*}
Note that the (inward) boundary orientation on $\partial_0 \mathscr{M}_{\alpha}^{[0,1]}$ agrees with its Fredholm theoretic orientation described in \S 1.5.2 above, while the boundary orientation on $\partial_1\mathscr{M}_{\alpha}^{[0,1]}$ opposes its Fredholm theoretic orientation. This is because (i) we are orienting $\mathscr{M}_{\alpha}^{[0,1]}$ using $\partial_t$ in \eqref{orientation of homotopy moduli}, (ii) the trivialization of $\det(D_u)$ induced by the spin structure is compatible with gluing (see \cite[Lemma 8.3.5]{FO3-book2}), and (iii) $\partial_t$ points towards larger gluing parameters near $t=0$, but it points towards smaller gluing parameters near $t=1$. 

Next, we give a Gromov-Witten theoretic interpretation of the count of rational curves appearing in the moduli space $\partial_0 \mathscr{M}_{\alpha}^{[0,1]}$.

\begin{lemma}\label{gravitational-descendants}
The signed count of the elements of the $0$-dimensional moduli space  $\mathscr{M}^{s}_{\mathbf{z}_1}(Z,D,p,\alpha)$ (see (\ref{moduli of spherical bubbles})) is $ r_1!\dotsi r_N!\langle \psi_{r-2}\emph{\text{pt}} \rangle_{\alpha}$, where 
\begin{equation*}
    \langle \psi_{r-2}\emph{\text{pt}} \rangle_{\alpha} \defeq \int_{\overline{\mathscr{M}}_{0,1}(Z,\alpha)}c_1(\matheu{L})^{r-2}\wedge ev^*([p])
\end{equation*}
is the point Gromov-Witten descendant in the class $\alpha$. Recall that $\matheu{L}\rightarrow \overline{\mathscr{M}}_{0,1}(Z,\alpha)$ in this formula is the universal cotangent line bundle over the space of genus $0$ stable maps in the class $\alpha$ with $1$-marked point.
\end{lemma}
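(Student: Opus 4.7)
The plan is to identify the count $\#\mathscr{M}^{s}_{\mathbf{z}_1}$ with a descendant Gromov--Witten integral via intersection theory on the Kontsevich space of stable rational maps.

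\emph{Intersection-theoretic identification.} An element of $\mathscr{M}^{s}_{\mathbf{z}_1}(Z,D,p,\alpha)$ is a parametrized simple rational curve $u\colon \mathbb{P}^1\to Z$ with $u(\infty)=p$ and $u(\zeta^k)\in D_{i(k)}$, where $i(k)$ records the component index $\boldsymbol{\zeta}_{i(k)}\ni \zeta^k$. Forgetting the $\text{PSL}_2$-parametrization and remembering the marked points $(\infty,\zeta,\ldots,\zeta^r)$, each element corresponds bijectively to an intersection point on $\overline{\mathscr{M}}_{0,r+1}(Z,\alpha)$ of the cycles $ev_0^{-1}(p)$, $ev_k^{-1}(D_{i(k)})$ for $k\geq 1$, and $\pi^{-1}([\text{pt}_{DM}])$, where $\pi\colon \overline{\mathscr{M}}_{0,r+1}(Z,\alpha)\to \overline{\mathscr{M}}_{0,r+1}$ is the forgetful map to Deligne--Mumford space and $[\text{pt}_{DM}]$ is the class of the configuration $(\infty,\zeta,\ldots,\zeta^r)$ modulo $\text{PSL}_2$. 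Transversality and proper compactness are inherited from Lemma \ref{compactness for tangency discs} and the Fano assumption $r\leq j_Z$.

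\emph{Reduction to a descendant.} By the standard identity $[\text{pt}_{DM}]=\psi_0^{r-2}$ in $H^{2(r-2)}(\overline{\mathscr{M}}_{0,r+1})$ (which follows from $\int_{\overline{\mathscr{M}}_{0,m}}\psi_k^{m-3}=1$), we have an expression for $\#\mathscr{M}^{s}_{\mathbf{z}_1}$ as an integral against $\pi^*\psi_0^{r-2}\cdot ev_0^*[p]\cdot \prod_k ev_k^*[D_{i(k)}]$. Pushing forward along the forgetful map $\sigma\colon \overline{\mathscr{M}}_{0,r+1}(Z,\alpha)\to \overline{\mathscr{M}}_{0,1}(Z,\alpha)$ that retains only the marked point $x_0$, and iteratively absorbing each $ev_k^*[D_{i(k)}]$ via the divisor axiom, one recovers $\langle\psi_{r-2}\text{pt}\rangle_\alpha$ up to a combinatorial factor; the ``correction'' term in the divisor axiom vanishes at each step since $[p]\smile[D_{i(k)}]=0$ in $H^*(Z)$ (as $p\notin D$).

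\emph{Combinatorial factor and main obstacle.} The factor $r_1!\cdots r_N!$ has a direct geometric interpretation: a generic simple curve $[u_0]\in\overline{\mathscr{M}}_{0,1}(Z,\alpha)$ through $p$ meets $D_i$ transversely in exactly $r_i$ distinct points (again by the Fano assumption and generic transversality), and a parametrized lift satisfying $u(\zeta^k)\in D_{i(k)}$ corresponds to a bijective assignment of the $r_i$ marked positions in $\boldsymbol{\zeta}_i$ to these intersection points, giving $r_i!$ choices per divisor $D_i$. The main technical obstacle lies in rigorously tracking this combinatorial factor through Step 2: the intrinsic stable-map class $\psi_0^{\text{stable}}$ differs from $\pi^*\psi_0^{DM}$ by boundary divisors supported on strata where $x_0$ sits on a non-contracted tail, and these corrections must be accounted for in the iterated divisor-axiom computation. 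A cleaner alternative, which bypasses this bookkeeping entirely, is to invoke Gathmann's relative GW formula (or the logarithmic GW formalism), which directly identifies the ordered fixed-contact count $\#\mathscr{M}^{s}_{\mathbf{z}_1}$ with $r_1!\cdots r_N!\cdot\langle\psi_{r-2}\text{pt}\rangle_\alpha$.
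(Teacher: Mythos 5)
Your proposal follows the same basic framework as the paper — realize the count as a degree of the stabilization map $\mathfrak{st}$ from a constrained Kontsevich space to Deligne--Mumford, and use $[\text{pt}_{DM}]=\psi_0^{r-2}$ — but the intermediate step where you propose to ``push forward along $\sigma$ and iteratively absorb each $ev_k^*[D_{i(k)}]$ via the divisor axiom'' does not give the right combinatorial factor, and the obstacle you name is the reason. The naive iterated divisor axiom applied to $\langle\psi^{r-2}[p]\cdot\prod_k[D_{i(k)}]\rangle_\alpha$ (with the \emph{intrinsic} stable-map class $\psi_0$, and the correction terms vanishing because $[p]\smile[D_i]=0$) produces the factor $\prod_i r_i^{r_i}=\prod_k(D_{i(k)}\cdot\alpha)$, not $r_1!\cdots r_N!$; already for $r=2$, $N=1$ one gets $(D\cdot\alpha)^2=4$ versus the correct $2!=2$. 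The discrepancy is exactly the difference between the intrinsic $\psi_0$ and the pullback $\mathfrak{st}^*\psi_0^{DM}$ appearing in $\deg(\mathfrak{st})$, which is supported on boundary strata where the $x_0$-component is contracted by stabilization (the constant trees carrying all the $z_k$'s). You flag this, correctly, as the main technical obstacle, but you do not resolve it.

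The paper sidesteps the divisor axiom entirely by introducing the forgetful map $\mathfrak{f}\colon \overline{\mathscr{M}}_{1+r}(Z,D,p,\alpha)\to \overline{\mathscr{M}}_{0,1}(Z,p,\alpha)$ to the \emph{constrained} one-pointed space, which has degree $r!$ because a generic curve through $p$ meets $D$ transversely at $r$ points. The key observation, which needs the Fano bound $r\le j_Z$ in two places, is that (i) $\overline{\mathscr{M}}_{0,1}(Z,p,\alpha)=\mathscr{M}_{0,1}(Z,p,\alpha)$ has no nodal curves, and (ii) the component carrying $x_0$ in any element of $\overline{\mathscr{M}}_{1+r}(Z,D,p,\alpha)$ is never constant. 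Together these yield the line bundle identity $\mathfrak{f}^*\matheu{L}=\mathfrak{st}^*\matheu{L}_0$, from which $\deg(\mathfrak{st})=r!\langle\psi_{r-2}\text{pt}\rangle_\alpha$ follows immediately by the projection formula. Passing from the unordered count $\widehat{\mathscr{M}}^s_{\mathbf{z}_1}$ to the component-ordered $\mathscr{M}^s_{\mathbf{z}_1}$ then rescales $r!$ to $r_1!\cdots r_N!$. Your alternative of invoking Gathmann's relative formula is a reasonable escape route, but it is not what the paper does, and it replaces one set of boundary bookkeeping by another. The concrete gap in your proof as written is the unjustified ``one recovers $\langle\psi_{r-2}\text{pt}\rangle_\alpha$ up to a combinatorial factor'': without the paper's use of the constrained space and the Fano pseudo-index bound, the factor comes out wrong.
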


\begin{proof}
It is instructive to compare the moduli space $\mathscr{M}^{s}_{\mathbf{z}_1}(Z,D,p,\alpha)$ with
\begin{equation*}
\widehat{\mathscr{M}}^{s}_{\mathbf{z}_1}(Z,D,p,\alpha)= \{u:\mathbb{P}^1 \rightarrow Z|\ \overline{\partial}_J u = 0,\ u(\infty)=p,\ u(\zeta^{k})\in D,\ [u]=\alpha\}.    
\end{equation*}
These two moduli spaces describe essentially the same rational curves, except that the latter over-counts by a factor of $r!/(r_1!\dots r_N!)$.
The count $\# \widehat{\mathscr{M}}^{s}_{\mathbf{z}_1}(Z,D,p,\alpha)$ is independent of the choice of $(J,p,\mathbf{z}_1)$ as long as transversality is achieved. Consider the space 
\begin{equation*}
    \overline{\mathscr{M}}_{1+r}(Z,D,p,\alpha)=\{(u,w,(z_k)_{k=1}^r)\in \overline{\mathscr{M}}_{1+r}(Z,\alpha)\ |\ u(w)=p,\ u(z_k)\in D \},
\end{equation*}
where $\overline{\mathscr{M}}_{1+r}(Z,\alpha)$ is the space of genus $0$ stable maps with $1+r$ marked points in the class $\alpha$. Then,
\begin{equation}\label{formula 1}
    \#\widehat{\mathscr{M}}^{s}_{\mathbf{z}_1}(Z,D,p,\alpha) =\deg (\overline{\mathscr{M}}_{1+r}(Z,D,p,\alpha)\xrightarrow[]{\mathfrak{st}} \overline{\mathscr{M}}_{1+r}).
\end{equation}
The Deligne-Mumford space $\overline{\mathscr{M}}_{1+r}$ carries a universal line bundle $\matheu{L}_0$ which tracks the cotangent lines at the first marked point $w$. Recalling that $c_1(\matheu{L}_0)^{r-2}=\text{PD}(\text{pt})$, we deduce that
\begin{equation}\label{formula 2}
    \deg (\overline{\mathscr{M}}_{1+r}(Z,D,p,\alpha)\xrightarrow[]{\mathfrak{st}} \overline{\mathscr{M}}_{1+r}) = \int_{\overline{\mathscr{M}}_{1+r}(Z,D,p,\alpha)}c_1(\mathfrak{st}^*\matheu{L}_0)^{r-2}.
\end{equation}
At the same time, there is a forgetful map of degree $r!$,
\begin{equation*}
    \overline{\mathscr{M}}_{1+r}(Z,D,p,\alpha)\xrightarrow{\mathfrak{f}}\overline{\mathscr{M}}_{0,1}(Z,p,\alpha),
\end{equation*}
which forgets the extra marked points $z_1,\dots,z_r$. Observe that 
\begin{equation*}
\overline{\mathscr{M}}_{0,1}(Z,p,\alpha) = \mathscr{M}_{0,1}(Z,p,\alpha),
\end{equation*}
due to the constraint $r=c_1(\alpha)\leq j_Z$.
Moreover, in a nodal curve $C\in \overline{\mathscr{M}}_{1+r}(Z,D,p,\alpha)$, the component containing the marked point that is mapped to $p$ is never constant. It follows that
\begin{equation}\label{pullback is pullback}
    \mathfrak{f}^*\matheu{L} = \mathfrak{st}^*\matheu{L}_0.
\end{equation}
The lemma now follows by combining (\ref{formula 1}), (\ref{formula 2}), and (\ref{pullback is pullback}).
\end{proof}

\begin{remark}
A couple of remarks are in order:
\begin{itemize}
    \item[-] Lemma \ref{gravitational-descendants} probably holds as long as $\alpha$ is a spherical class such that $c_1(\alpha)=\alpha\cdot D$, without the assumption $c_1(\alpha)\leq j_Z$. The key identity (\ref{pullback is pullback}) in this case holds outside of a codimension $4$ subset, which should be enough.
    \item[-] In \cite{tonkonog-periods}, D.Tonkonog shows an interpretation of the gravitational descendant $\langle \psi_{r-2}\emph{\text{pt}}\rangle_{\alpha}$ using a certain moduli space of spheres which are tangent to a local hypersurface near \emph{pt}. That interpretation seems different from the one presented here.
\end{itemize}
\end{remark}
We have now collected all the ingredients for our main technical result.
\begin{theorem}\label{main theorem}
Let $L\subseteq (Z,I,\omega)$ be an admissible Lagrangian submanifold which is oriented and spin, such that $L\cap D=\emptyset$. Let $\alpha\in H_2(Z,L)$ be a relative class such that
\begin{equation*}
r\defeq\frac{1}{2}\mu_L(\alpha)=\alpha\cdot D\leq j_Z.    
\end{equation*}
Then,
\begin{equation}\label{main-formula}
    \frac{r!}{r_1!\dotsi r_N!}\tau^D_{\alpha}(L) + r!\langle \psi_{r-2}\emph{\text{pt}}\rangle_{\alpha} = W_L^r [\alpha].
\end{equation}
\end{theorem}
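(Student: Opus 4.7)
The plan is to interpret the identity as the vanishing signed boundary count of the compact oriented $1$-dimensional cobordism $\mathscr{M}_\alpha^{[0,1]}\to [0,1]$ constructed immediately above the statement. The text already records that the induced boundary orientation agrees with the Fredholm orientation on $\partial_0\mathscr{M}_\alpha^{[0,1]}$ but disagrees on $\partial_1\mathscr{M}_\alpha^{[0,1]}$, coming from the fact that $\partial_t$ points toward larger gluing lengths near $t=0$ and toward smaller ones near $t=1$. Thus $\sum_{p\in\partial}\epsilon_{\text{induced}}(p)=0$ reduces to $\#_F\partial_0\mathscr{M}_\alpha^{[0,1]}=\#_F\partial_1\mathscr{M}_\alpha^{[0,1]}$, and the task becomes to evaluate both sides in the desired terms.

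For the $t=0$ count, Lemma \ref{compactness for tangency discs} and Lemma \ref{spherical bubble} together yield the decomposition $\partial_0\mathscr{M}_\alpha^{[0,1]}=\mathscr{T}_{\mathbf{v}}^\chi(\alpha)\sqcup\bigl(\mathscr{M}_\chi\times\mathscr{M}^s_{\mathbf{z}_1}(Z,D,p,\alpha)\bigr)$. The first piece contributes $\tau^D_\alpha(L)$ by definition, while the second piece contributes $r_1!\cdots r_N!\,\langle\psi_{r-2}\text{pt}\rangle_\alpha$ by Lemma \ref{gravitational-descendants}. Hence
$$\#_F\partial_0\mathscr{M}_\alpha^{[0,1]}=\tau_\alpha^D(L)+r_1!\cdots r_N!\,\langle\psi_{r-2}\text{pt}\rangle_\alpha.$$

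For the $t=1$ count, Lemma \ref{disc bubbles at t=1} identifies $\partial_1\mathscr{M}_\alpha^{[0,1]}$ as a disjoint union over ordered decompositions $\alpha=\sum_{i,j}\beta_j^i$ (with $\mu_L(\beta_j^i)=2$ and $\beta_j^i\cdot D_i=1$) of product moduli spaces $\prod_{i,j}\mathscr{M}^{D_i}_{(\boldsymbol{\zeta}_i)_j}(L,\beta_j^i)$. Applying Lemma \ref{count agrees with classical count} to each factor gives
$$\#_F\partial_1\mathscr{M}_\alpha^{[0,1]}=\sum_{\alpha=\sum\beta_j^i}\prod_{i,j}m_{0,\beta_j^i}(L).$$
Repackaging this sum in terms of ordered $r$-tuples indexed by roots of unity, as in \eqref{small discs II}, instead of by the pair $(i,j)$, introduces the over-counting factor $r!/(r_1!\cdots r_N!)$ noted just after \eqref{small discs}, so the right-hand side equals $\tfrac{r_1!\cdots r_N!}{r!}W_L^r[\alpha]$.

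Equating the two counts and multiplying by $r!/(r_1!\cdots r_N!)$ yields \eqref{main-formula}. The main obstacle is not any single estimate (compactness, transversality, gluing, and descendant interpretation have all been established in the preceding subsections) but rather the careful bookkeeping: one must verify that the factor $r_1!\cdots r_N!$ arising from the descendant identification in Lemma \ref{gravitational-descendants} is exactly the factor needed to cancel against the combinatorial reordering from partitioning the $r$ intersection points by divisor, and that the sign flip between $\partial_0$ and $\partial_1$ propagates correctly through the Fredholm orientation prescriptions of \S 1.5.2.
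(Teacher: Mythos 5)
Your proposal is correct and follows exactly the argument the paper sets up but leaves implicit after the sentence ``We have now collected all the ingredients.'' The cobordism $\mathscr{M}_\alpha^{[0,1]}$, the boundary identifications at $t=0$ (Lemmas \ref{compactness for tangency discs}, \ref{spherical bubble}, \ref{gravitational-descendants}) and $t=1$ (Lemmas \ref{disc bubbles at t=1}, \ref{count agrees with classical count}), the sign flip between the two ends, and the combinatorial factor $r!/(r_1!\cdots r_N!)$ all appear as you describe, and your bookkeeping of the factor against $W_L^r[\alpha]$ is accurate. One small slip: you write that $\partial_t$ points toward \emph{larger} gluing \emph{lengths} near $t=0$ and smaller near $t=1$; the paper's item (iii) says the opposite when phrased in terms of the gluing \emph{parameter} $\delta=e^{-\rho}$ (larger $\delta$ means smaller length $\rho$), and in fact the convention in (\ref{gluing-map}) and the surrounding discussion makes $\partial_t$ point toward smaller gluing lengths near $t=0$ and larger ones near $t=1$. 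This does not affect the essential point --- that the orientation relation flips sign between the two ends --- so your conclusion $\#_F\partial_0=\#_F\partial_1$ stands.
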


We now briefly discuss the case when $D$ is smooth and $c_1(Z)-D$ is nef, but not necessarily effective. In this setup, we also have an analogue of the counting formula \eqref{main-formula}, provided that $L\subseteq Z\backslash D$ satisfies an appropriate positivity assumption.

\begin{definition}\label{Maslov-positive-definition}
Let $D\subseteq (Z,I)$ be a smooth divisor. A Lagrangian embedding $L\subseteq Z\backslash D$ is said to be Maslov positive if
\begin{equation}\label{Maslov positive}
    \frac{1}{2}\mu_{L}(u) \geq \max\{u\cdot D, 1 \},
\end{equation}
for all $I$-holomorphic disc $u:\disk\rightarrow (Z,L)$.
\end{definition}

For example, if there is a smooth divisor $D_1$ such that $D\cup D_1$ is anti-canonical and $L\subseteq Z\backslash (D\cup D_1)$ is admissible, then $L\subseteq Z\backslash D$ is Maslov positive. More importantly, we have the following direct consequence of Gromov compactness.
\begin{lemma}
    If $L\subseteq Z\backslash D$ is Maslov positive and $J_z$ is a sufficiently small domain-dependent perturbation of $I$, then
    \begin{equation*}
    \frac{1}{2}\mu_{L}(u) \geq \max\{u\cdot D, 1 \}
    \end{equation*}
    for all $J_z$-pseudo-holomorphic discs $u:\disk\rightarrow (Z,L)$.
\end{lemma}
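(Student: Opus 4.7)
The plan is to argue by contradiction via Gromov compactness. Suppose the conclusion fails; then there exist a sequence of perturbation data $J^{(n)}=(J^{(n)}_z)\in \mathscr{K}_J$ converging to $I$ in $C^\infty$ and $J^{(n)}$-holomorphic discs $u_n:\disk\rightarrow (Z,L)$ each violating the inequality. I would first reduce to a fixed homology class: for any a priori energy bound $E_0$ (implicit in the lemma, and harmless for the intended applications), only finitely many classes $\beta\in H_2(Z,L)$ with $\omega(\beta)\leq E_0$ can support a pseudo-holomorphic disc, since $\omega$ evaluates positively on every effective disc class and the set of effective classes of bounded energy is finite by a standard Gromov compactness lemma. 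Passing to a subsequence we may assume $[u_n]=\beta$ is constant.

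Next I would apply Gromov compactness to extract a further subsequence converging to an $I$-holomorphic stable map $u_\infty$ whose domain is a nodal disc: a tree of disc components $v_1,\dots,v_p$ (with $p\geq 1$) and sphere bubbles $s_1,\dots,s_q$. Invariance of the total class under Gromov convergence yields
\begin{equation*}
\beta=\sum_i[v_i]+\sum_j[s_j],\quad \mu_L(\beta)=\sum_i\mu_L(v_i)+2\sum_j c_1(s_j),\quad \beta\cdot D=\sum_i v_i\cdot D+\sum_j s_j\cdot D.
\end{equation*}

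The decisive step combines the two positivity inputs on the limit. Each disc component $v_i$ is an $I$-holomorphic disc with boundary on $L$, so the Maslov positivity of $L$ with respect to $I$ gives $\tfrac{1}{2}\mu_L(v_i)\geq \max\{v_i\cdot D,1\}$. Each sphere bubble $s_j$ is an $I$-holomorphic rational curve in $Z$, hence represents an algebraic class, so by the nefness of $c_1(Z)-D$ one has $c_1(s_j)\geq s_j\cdot D$. Summing over components and using $p\geq 1$,
\begin{equation*}
\tfrac{1}{2}\mu_L(\beta)=\sum_i\tfrac{1}{2}\mu_L(v_i)+\sum_j c_1(s_j)\geq \sum_i \max\{v_i\cdot D,1\}+\sum_j s_j\cdot D\geq \max\{\beta\cdot D,1\},
\end{equation*}
contradicting the fact that $[u_n]=\beta$ was a bad class.

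The main obstacle I expect is the uniform finiteness step: the phrase ``for all $J_z$-pseudo-holomorphic discs'' is a priori a claim over an infinite set of classes, so one must either restrict to a bounded-energy regime (which suffices for every intended application of the lemma) or rule out high-energy counter-example classes by a separate argument. Once that restriction is in place, Gromov compactness combined with the positivity estimates above closes the argument cleanly; no new analytic ingredient is required beyond $C^\infty$-continuity of the $\overline{\partial}_{J_z}$-equation in $J_z$, which is standard.
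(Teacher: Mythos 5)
The paper offers no explicit proof of this lemma—it is asserted as a direct consequence of Gromov compactness and left to the reader. Your argument is that consequence, and the overall structure (contradiction via a sequence $J_z^{(n)}\to I$, Gromov limit, additivity of $\mu_L$ and of intersection with $D$, Maslov positivity for the $I$-holomorphic disc components, nefness of $c_1(Z)-D$ for the sphere components) is the right one. Two details need tightening before the chain of inequalities closes.

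First, you apply $\tfrac{1}{2}\mu_L(v_i)\geq\max\{v_i\cdot D,1\}$ to every disc component $v_i$ of the limit, but Maslov positivity is only stated (and only true) for \emph{non-constant} $I$-holomorphic discs. The Gromov limit can contain constant disc components stabilized by nodal points, for which $\mu_L(v_i)=0$ and $v_i\cdot D=0$, so your inequality reads $0\geq 1$ on those components. The fix is harmless: sum only over non-constant components, since constant ones contribute zero to both sides of the identity
\begin{equation*}
\tfrac{1}{2}\mu_L(\beta)=\sum_{v_i\ \mathrm{non\text{-}const.}}\tfrac{1}{2}\mu_L(v_i)+\sum_{s_j\ \mathrm{non\text{-}const.}}c_1(s_j).
\end{equation*}
Second, the phrase ``using $p\geq 1$'' to secure the bound $\geq 1$ does not work as stated: the Gromov limit of a disc always has a disc component, but that component could be constant with all the energy pushed into sphere bubbles. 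What you actually need is that the limit has positive total energy (there is a uniform energy lower bound for non-constant $J_z$-holomorphic discs over a compact family of almost complex structures), hence some component is non-constant; if it is a disc, Maslov positivity gives a contribution $\geq 1$, and if it is a sphere, $c_1(s_j)\geq j_Z\geq 1$ because $Z$ is Fano. With these two repairs the argument is complete.

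Your remark about the implicit energy bound is a genuine observation, not a flaw in your proof: the ``sufficiently small'' perturbation must in principle depend on a fixed energy cap, and this is entirely adequate for every use of the lemma in the paper since the homology classes $\alpha,\beta_i$ are always fixed in advance. The paper does not address this point, and you are right to flag it.
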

The methods leading up to Theorem \ref{main theorem} carry through, word-for-word, as long as one restricts to \emph{small} perturbations of $I$ when choosing a generic $J\in \mathscr{J}_N(Z,\omega)$ and a generic $(J_z)\in\mathscr{K}_J$. In a sense, this means that the enumerative invariants $m_{0,\beta}(L)$ and $\tau^D_{\alpha}(L)$ may depend on the complex structure $I$. 

The counting formula in this setup is
\begin{equation}\label{main-formula-smooth}
    \tau^D_{\alpha}(L) + r!\langle \psi_{r-2}\text{pt}\rangle_{\alpha} = (W^D_{L})^r [\alpha],
\end{equation}
where $\alpha\in H_2(Z,L)$ is a relative class and $W^D_L$ is the part of the super-potential which accounts for discs intersecting $D$,
\begin{equation*}
    W^D_L = \sum_{\mu_L(\beta)=2,\beta\cdot D=1} m_{0,\beta}(L) q^\beta.
\end{equation*}

Of course, in either counting formula (\ref{main-formula}) or (\ref{main-formula-smooth}), the quantity $r!\langle \psi_{r-2}\emph{\emph{\text{pt}}}\rangle_{\alpha}$ only contributes when $\alpha$ is a spherical class, i.e. $\partial \alpha = 0$ in $H_1(L)$. The sum over all such classes $\alpha$ is denoted by
\begin{equation*}
 r!\langle \psi_{r-2}\emph{\emph{\text{pt}}}\rangle_{r} \defeq \sum_{c_1(\alpha)=r, \alpha\in H_2(Z)} r!\langle \psi_{r-2}\emph{\emph{\text{pt}}}\rangle_{\alpha}.
\end{equation*}
It is the first potentially non-zero coefficient of the \emph{regularized quantum period} of $Z$, which we recall is given by
\begin{equation*}
    \widehat{G}_Z(t) = 1+\sum_{k\geq j_Z} k!\langle \psi_{k-2}\emph{\emph{\text{pt}}}\rangle_{k} t^k.
\end{equation*}
Quantum periods have been extensively studied in the literature due to their relevance in mirror symmetry. For instance, \cite{coates-et-al-quantum-periods} computes the regularized quantum periods of all Fano threefolds. Mirror symmetry for $Z$ predicts the existence of a Laurent polynomial $f$ in $n$-variables such that the constant term of the polynomial $f^k$ is the $k^{\text{th}}$-coefficient of $\widehat{G}_Z$. In \cite{tonkonog-periods}, D. Tonkonog shows that the super-potential $W_L$ of a \emph{monotone Lagrangian torus} $L$ satisfies this property. Mirror symmetry further predicts that $Z$ can be degenerated to a toric variety whose fan polytope is the Newton polytope of $f$. The pair $(X,f)$ is called a toric Landau-Ginzburg model for $Z$, see  \cite{katzarkov-przyj-old-and-new, coates-mirror-fano} for more literature on this mirror correspondence.  

\subsection{First applications} We now briefly mention a few (fairly direct) consequences of the formula (\ref{main-formula}). Recall that for any Lagrangian embedding $L\hookrightarrow Z$, the minimal Maslov number is defined as
\begin{equation*}
    N_L = \inf\{ \mu_L(\beta) \ | \beta\in H_2(Z,L) \}.
\end{equation*}
It was conjectured by Audin that monotone Lagrangian tori in $\mathbb{P}^n$ have minimal Maslov number $2$. This conjecture was proved by Cieleback-Mohnke in \cite{CM-neck-stretching} using an SFT neck-stretching arguments. The quantum period theorem of Tonkonog (see \cite{tonkonog-periods}) extends this result to monotone Lagrangian tori in other Fano varieties.

\begin{corollary}\label{Audin's conjecture}
Let $Z$ be a Fano variety such that $\langle \psi_{r-2}\emph{\text{pt}}\rangle_{r}\neq 0$, where $r$ is the pseudo-index of $Z$. Let $D\subseteq Z$ be an anti-canonical divisor which is simple normal crossings. If $L\subseteq Z\backslash D$ is an admissible Lagrangian of non-positive sectional curvature, then $N_L=2$. 
\end{corollary}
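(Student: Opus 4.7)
The plan is to argue by contradiction: suppose $N_L>2$. Since $L$ is admissible, $\eta_L=0$ and the Maslov identity \eqref{Maslov identity} gives $\mu_L(u)=2\,u\cdot D$ for every holomorphic disc $u$. Hence $\mu_L$ takes only even values on such discs, so $N_L$ is even and the standing hypothesis reads $N_L\geq 4$.

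Under this assumption no Maslov $2$ disc bounds $L$, so the superpotential $W_L\equiv 0$ in $\mathbb{C}[H_2(Z,L)]$. I apply Theorem \ref{main theorem} to any spherical class $\alpha\in H_2(Z)\subseteq H_2(Z,L)$ with $c_1(\alpha)=r$; such a class satisfies $\alpha\cdot D=r$ and $\mu_L(\alpha)=2r$, and the constraint $r\leq j_Z$ is automatic because $r=j_Z$ by definition of the pseudo-index. Since $W_L^r=0$, formula \eqref{main-formula} collapses to
\[
\frac{r!}{r_1!\cdots r_N!}\,\tau^D_\alpha(L)\;=\;-\,r!\,\langle\psi_{r-2}\text{pt}\rangle_\alpha.
\]
Summing over all spherical $\alpha$ with $c_1(\alpha)=r$ and invoking the hypothesis $\langle\psi_{r-2}\text{pt}\rangle_r\neq 0$ produces a nonzero tangency count. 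Hence there is such an $\alpha_0$ and a $J$-holomorphic tangency disc $u_0\colon(\mathbb{D},\partial\mathbb{D})\to(Z,L)$ in class $\alpha_0$, of Maslov index $2r$, whose boundary loop $\gamma_0\subseteq L$ is null-homologous.

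The main obstacle is to contradict the existence of $u_0$ using non-positive sectional curvature. By the Cartan--Hadamard theorem, $L$ is aspherical and its universal cover is diffeomorphic to $\mathbb{R}^{\dim L}$; moreover, in non-positive curvature any null-homotopic loop in $L$ bounds a disc in $L$ of area at least $\text{length}^2/(4\pi)$. The plan is to combine this isoperimetric bound with the positive symplectic area $\omega(u_0)=\omega(\alpha_0)>0$, together with admissibility (which forces $\omega$ to vanish on any relative class in $H_2(Z\setminus D,L)$), to cap $\gamma_0$ by a disc in $L$ and assemble a closed $2$-cycle in $Z\setminus D$ of area $\omega(u_0)$, a contradiction. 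The delicate point --- and the principal obstacle --- is to upgrade the null-homology of $\gamma_0$ to null-homotopy in $L$; I would attempt this via a rescaling argument at the tangency point $u_0(0)\in D$, which in the bubble limit produces a rational curve in $Z$ attached to a disc of strictly smaller Maslov index with boundary on $L$. Iterating, one either produces a Maslov $2$ disc on $L$ (a direct contradiction with $W_L=0$) or forces $\gamma_0$ to be null-homotopic, after which the isoperimetric / admissibility argument closes the proof.
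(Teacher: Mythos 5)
Your reduction to the vanishing of the tangency invariants $\tau^D_\alpha(L)$ (via Theorem \ref{main theorem} together with the hypothesis that $W_L=0$ and $\langle\psi_{r-2}\text{pt}\rangle_r\neq 0$) matches the paper, and is correct. The mechanism you propose for deriving a contradiction from a non-zero tangency count, however, has a gap and is not the argument the paper uses.

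You want to cap the boundary loop $\gamma_0$ of the tangency disc $u_0$ by a disc in $L$ and assemble a closed $2$-cycle \emph{in $Z\setminus D$} of area $\omega(u_0)>0$, contradicting $\langle\omega,\pi_2(Z\setminus D,L)\rangle=0$. This cannot work: by construction $u_0$ satisfies $j^D_{0,r-1}(u_0)=0$ with $r\geq 2$, so $u_0(0)\in D$ and $u_0$ does \emph{not} lie in $Z\setminus D$. Any $2$-cycle built out of $u_0$ and a cap in $L$ is therefore not a class in $\pi_2(Z\setminus D,L)$, and admissibility says nothing about it --- indeed, after capping you simply recover the spherical class $\alpha_0$ itself, which of course has positive symplectic area, with no contradiction. (The isoperimetric inequality you invoke is also oriented the wrong way for nonpositive curvature --- in CAT(0) spaces one obtains \emph{upper} bounds on filling area in terms of perimeter --- but that is secondary.) The sub-step you flag as ``delicate,'' promoting null-homology of $\gamma_0$ to null-homotopy by rescaling at $u_0(0)$, is likewise not available: a fixed smooth tangency disc has no energy concentration at $0$, so rescaling there produces no bubble and no induction on Maslov index.

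The paper's route is genuinely different. Starting from $\tau^D_\alpha(L)\neq 0$ for a spherical class $\alpha$ with $c_1(\alpha)=r$, one neck-stretches an almost complex structure along the unit cotangent bundle of $L$. The resulting sequence of tangency discs Gromov--Hofer converges to a holomorphic building whose top part, living in $Z\setminus L$, contains the unique non-constant component (the one carrying the order-$r$ tangency to $D$), and whose bottom part is a punctured holomorphic disc in $T^*L$ with boundary on the zero section, asymptotic to a closed Reeb orbit of $S^*L$. For a metric of non-positive sectional curvature, closed Reeb orbits on $S^*L$ project to closed geodesics, none of which is null-homotopic; this contradicts the topology of the building. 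So the role of non-positive curvature is to exclude contractible closed geodesics --- a dynamical/topological constraint --- rather than to control areas of fillings. Your proposal never produces such a constraint.
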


We note that the constraint $\langle \psi_{r-2}\text{pt}\rangle_{r}\neq 0$ automatically excludes Fano varieties which contain lines (i.e. those with $r=j_Z=1$). However, for such Fano manifolds, the conclusion of the corollary tautologically holds for any Lagrangian embedding $L\hookrightarrow Z$. Among Fano varieties with $j_Z\geq 2$, the constraint $\langle \psi_{r-2}\text{pt}\rangle_{r}\neq 0$ does not always hold, but at the same time it doesn't seem very restrictive. For instance, Corollary \ref{Audin's conjecture} applies to all Fano surfaces and threefolds.
\vspace{5pt}\\
\textit{Proof sketch.}
Requiring that $L$ admits a metric of non-positive curvature ensures that $\tau_{\alpha}(L)=0$ whenever $\partial \alpha = 0$ in $\pi_1(L)$ and $c_1(\alpha)=r$. Indeed, if $\tau_{\alpha}(L)\neq 0$, then the tangency moduli space can't be empty for any choice of almost complex structure $J\in \mathscr{J}_N(Z,\omega)$. In particular, one gets a disc $u_k$ that is fully tangent to $D$ (i.e. $j^D_{0,r-1}(u_k) = 0$) for each of the neck-stretched almost complex structures $J_k$ constructed in \cite{CM-neck-stretching}. The SFT limit of the sequence $(u_k)$ is a holomorphic building as described in \cite[corollary 2.9]{CM-neck-stretching}. Examining the top part of the building, $Z\backslash L$, only the component tangent to $D$ can be non-constant. Therefore, the bottom part of the building is a half-cylinder in $T^*L$ with boundary on $L$, which is asymptotic to a Reeb orbit. But Reeb orbits in $T^*L$ are lifts of closed geodesics from $L$, none of which is null-homotopic.

Once we known that $\tau_{\alpha}(L)=0$, and $\langle \psi_{r-2}\text{pt}\rangle_{r}\neq 0$, the claim follows immediately from the counting formula (\ref{main-formula}).
\qed
\vspace{5pt}

Our next application is towards Lagrangian topology, along the same lines of Fukaya's work in \cite{Fukaya-icm}. Recall that the index of a Fano manifold is the largest integer $i_Z$ such that
\begin{equation*}
    c_1(Z) = i_Z H
\end{equation*}
for some primitive class $H\in H^2(Z,\mathbb{Z})$. For example, $i_{\mathbb{P}^n}=n+1$.

\begin{corollary}
Suppose that $Z$ is a Fano variety of index $r\geq 2$ such that $\langle \psi_{r-2}\emph{\text{pt}}\rangle_{r}\neq 0$. Let $H_1,\dots,H_r$ be a collection of homologous divisors in general position whose union $D=\cup_{i=1}^r H_i$ is anti-canonical. If $L\subseteq Z\backslash D$ is an admissible Lagrangian of non-positive sectional curvature, then $L$ is finitely covered by a product $(S^1)^{r-1}\times K$. 
\end{corollary}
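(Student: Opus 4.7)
The plan is to extract $r$ Maslov-$2$ disc boundaries on $L$ from the non-vanishing of $\langle\psi_{r-2}\emph{\text{pt}}\rangle_{r}$ via Theorem \ref{main theorem}, and then follow Fukaya \cite{Fukaya-icm} to upgrade these boundaries into an $(r-1)$-dimensional parallel distribution on $L$. The structure of the argument parallels that of Theorem \ref{topological constraint}, with admissibility replacing the graded/exact hypothesis; the index-$r$ Fano assumption together with the homologous structure of $D$ make the enumerative formula of Theorem \ref{main theorem} especially clean in this setting.

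First I would apply Theorem \ref{main theorem} to every class $\alpha\in H_2(Z,L)$ with $c_1(\alpha)=r$. Because $c_1(Z)=rH$ for a primitive class $H$ and each $H_i$ is homologous to $H$, any such $\alpha$ satisfies $\alpha\cdot H_i=H\cdot\alpha=1$ for every $i$, so the tangency vector is $\mathbf{v}=(1,\dots,1)$ and also $r\leq j_Z$. Admissibility together with the Maslov identity force $u\cdot D=1$ for every Maslov-$2$ disc, so each meets a unique component $H_k$ exactly once, yielding a splitting $W_L=\sum_{k=1}^r W_L^{H_k}$. A direct expansion using the commutativity of $\mathbb{C}[H_2(Z,L)]$ gives $W_L^r[\alpha]=r!\,[q^\alpha]\prod_k W_L^{H_k}$ for such $\alpha$, so \eqref{main-formula} simplifies to
\[
\tau^D_\alpha(L)+\langle\psi_{r-2}\emph{\text{pt}}\rangle_\alpha=[q^\alpha]\prod_{k=1}^r W_L^{H_k}.
\]
I would then invoke the SFT neck-stretching argument of Corollary \ref{Audin's conjecture} to kill $\tau^D_\alpha(L)$ whenever $\partial\alpha$ is null-homotopic in $L$, passing to the abelianization cover $\hat L\to L$ (on which the relevant disc counts are preserved) to convert the null-homology condition for spherical classes into null-homotopy. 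Summing the displayed identity over spherical $\alpha$ and using $\langle\psi_{r-2}\emph{\text{pt}}\rangle_r\neq 0$ forces $\prod_k W_L^{H_k}$ to carry a nonzero coefficient on some spherical class, producing Maslov-$2$ disc classes $\beta_1,\dots,\beta_r$ with $\beta_k\cdot H_j=\delta_{jk}$, $m_{0,\beta_k}(L)\neq 0$, and boundaries $\gamma_k\defeq\partial\beta_k$ satisfying $\sum_k\gamma_k=0$ in $H_1(L)$.

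The remaining step converts the boundary classes $\gamma_1,\dots,\gamma_r$ into the claimed topological splitting, along the lines of \cite{Fukaya-icm}. Non-positive sectional curvature implies that each non-trivial $\gamma_k$ is freely homotopic to a unique closed geodesic; varying the point constraint on $L$ in the moduli $\mathscr{M}_{0,1}(L,\beta_k)$ produces a $1$-parameter family of Maslov-$2$ discs whose boundaries sweep a totally geodesic flat annulus, yielding a parallel unit vector field along this geodesic. The $r$ resulting directions on $L$ are subject to the single relation $\sum_k\gamma_k=0$, so they span an $(r-1)$-dimensional parallel distribution; by the de Rham decomposition the universal cover of $L$ splits isometrically as $\mathbb{R}^{r-1}\times\tilde K$, and the Bieberbach--Gromoll--Wolf flat-factor theorem then yields a finite cover of $L$ of the form $(S^1)^{r-1}\times K$. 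I expect this last step to be the main obstacle: the rigidity that turns families of holomorphic discs into genuine parallel directions (rather than merely families of closed geodesics) requires a careful interplay between holomorphic curve theory and non-positive comparison geometry, essentially in the spirit of Fukaya's framework.
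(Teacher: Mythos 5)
Your extraction of the Maslov-$2$ disc classes $\beta_1,\dots,\beta_r$ with $\beta_k\cdot H_j=\delta_{jk}$, $m_{0,\beta_k}(L)\neq 0$ agrees with the paper's argument (apply Theorem~\ref{main theorem}, kill $\tau^D_\alpha(L)$ via the SFT neck-stretching of Corollary~\ref{Audin's conjecture}, and split $W_L$ by which component $H_k$ a disc meets). One point you assert without proof is that the boundaries $\gamma_k$ are ``subject to the single relation $\sum_k\gamma_k=0$'': the paper proves this carefully by noting that any relation $a_1\gamma_1+\dots+a_r\gamma_r=0$ lifts to a spherical class $C$ with $C\cdot H_j=a_j$, and since the $H_j$ are homologous this forces $a_1=\dots=a_r$. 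This is a small gap but easily filled.

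The genuine problem is the last step. You want to convert the disc classes directly into a parallel $(r-1)$-plane distribution on $L$ and then invoke de Rham plus the flat-factor theorem, but the ingredients do not work as stated. First, in a manifold of \emph{non-positive} (as opposed to negative) sectional curvature, a free homotopy class of loops is not represented by a unique closed geodesic; the geodesic representatives form a convex flat subset, and indeed the whole mechanism of Fukaya's argument is to detect and exploit this flat set. Second, varying the point constraint in $\mathscr{M}_{0,1}(L,\beta_k)$ gives you a cycle in the free loop space, not a ``family of discs whose boundaries sweep a totally geodesic flat annulus''; boundaries of holomorphic discs are not geodesics, and nothing forces their sweep to be totally geodesic or flat, so the construction of a parallel vector field along a geodesic is not available at this point. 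The paper avoids these difficulties by staying homological and group-theoretic: the non-vanishing degree of $ev\colon\mathscr{M}_{H_i}(L,\beta_i)\to L$ gives $H_n(\mathcal{L}_{\gamma_i}L)\neq 0$, which by \cite[Lemma 2.4]{Fukaya-icm} (this is where non-positive curvature enters) forces the centralizers $C_{\gamma_i}\subseteq\pi_1(L)$ to have finite index. Passing to a finite cover $L^+$ with $\pi_1(L^+)=C_G$ for $G=\langle\gamma_i^d\rangle$ puts a free abelian group of rank $r-1$ into the center of $\pi_1(L^+)$, and the Lawson--Yau center theorem \cite{Lawson-Yau} supplies the splitting $L^+\cong (S^1)^{r-1}\times K$. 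Your de Rham/Gromoll--Wolf route is the right circle of ideas, but deriving a literal parallel distribution from the discs requires an intermediate argument you do not supply, and the uniqueness-of-geodesic-representative assumption on which you lean is false precisely in the non-negatively curved flat directions you are trying to detect.
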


\vspace{5pt}
\textit{Proof sketch.}
Let $\alpha \in H_2(Z)$ be a curve class which has a non-zero contribution to the Gromov-Witten descendant $\langle \psi_{r-2}\text{pt}\rangle_{r}$.  As in the proof of corollary \ref{Audin's conjecture}, we first use the non-positive curvature assumption to ensure that $\tau_{\alpha}(L)= 0$. Using the counting formula (\ref{main-formula}), the class $\alpha$ decomposes into disc classes $\beta_1,\dots,\beta_r\in H_2(Z,L)$ such that $\mu_L(\beta_i)=2$, $\beta_i\cdot H_j = \delta_{ij}$, and $m_{0,\beta_i}(L)\neq 0$. We now study properties of the loops $\gamma_i\defeq \partial \beta_i\in \pi_1(L)$.

\vspace{4pt}
\textit{Claim:} In $H_1(L)$, the loops $\gamma_i$ generate a free Abelian group of rank $r-1$.

\textit{Proof of Claim.} Indeed, if we have a relation $a_1\gamma_1+\dotsi+a_r\gamma_r = 0$, then there is a curve class $C\in H_2(Z)$ such that
\begin{equation*}
    C = a_1\beta_1+\dotsi+a_r\beta_r \quad\text{in}\ H_2(Z,L).
\end{equation*}
It follows that $C\cdot H_j = a_j$, for all $j=1,\dots,r$. But recall that all the divisors $H_j$ are homologous to one another. Therefore, the classes $(\gamma_i)$ only satisfy one non-trivial relation in $H_1(L)$, which is $\gamma_1+\dotsi+\gamma_r=0$.
\vspace{4pt}

Going back to (the proof of) Lemma \ref{count agrees with classical count}, the integer $m_{0,\beta_i}$ is the degree of an evaluation map
\begin{equation*}
    ev: \mathscr{M}_{H_i}(L,\beta_i)\rightarrow L,
\end{equation*}
which factors through the component $\mathcal{L}_{\gamma_i}(L)$ of the free loop space of $L$. In particular $H_n(\mathcal{L}_{\gamma_i}(L))\neq 0$. Since $L$ has non-positive sectional curvature, this implies that the centralizers
\begin{equation*}
    C_{\gamma_i} = \{g\in\pi_1(L) \ |\ g^{-1}\gamma_i g = \gamma_i \}
\end{equation*}
have finite index in $\pi_1(L)$, see \cite[Lemma 2.4]{Fukaya-icm}. Therefore, their intersection $C=\cap_{i=1}^r C_{\gamma_i}$ is also a finite index subgroup of $\pi_1(L)$. Let $d$ be its index, and let $G$ be the subgroup of $\pi_1(L)$ that is generated by the collection $\gamma_i^d$. Then $G$ is a free Abelian subgroup of rank $r-1$ (see Claim above) and its centralizer $C_G\subseteq \pi_1(L)$ has finite index (note that $C_{\gamma_i}\subseteq C_{\gamma_i^d}$). Let $L^+$ be the finite covering of $L$ whose fundamental group is $C_G$. Then $L^+$ in turn supports a metric of non-positive sectional curvature. Moreover, $G$ is a free Abelian subgroup of the center $\mathcal{Z}(\pi_1(L^+))$. Using the center theorem of Lawson-Yau (see \cite{Lawson-Yau}), we deduce that $L^+$ (and hence $L$) is covered by a product $(S^1)^{r-1}\times K$ for some smooth compact manifold $K$.
\qed
\vspace{5pt}

%\subfile{Cyclic covers}
\section{Lagrangian tori in cyclic covers}
\subsection{Preliminaries} Let $(Y,I_Y,\omega_Y)$ be a smooth Fano variety of dimension $n\geq 2$ which is equipped with K\"ahler form $\omega_Y$.
\begin{definition}
A cyclic covering datum is a triple $(r,\mathscr{L},\sigma)$, where $r\geq 2$ is an integer, $\mathscr{L}\rightarrow Y$ is an ample line bundle, and $\sigma:\mathscr{O}_Y\rightarrow \mathscr{L}^r$ is a holomorphic section whose zero locus $D_Y=\sigma^{-1}(0)\subseteq Y$ is transversely cut.
\end{definition}
As the name suggests, a cyclic covering datum $(r,\mathscr{L},\sigma)$ determines a projective variety $X$ and a cyclic covering map $\phi:X\rightarrow Y$, where
\begin{equation*}
   X = \left\{ p\in\mathscr{L} \ | \ p^{\otimes r} = \sigma\right\}\subseteq \text{Tot}(\mathscr{L}).
\end{equation*}
More precisely,
\begin{equation}\label{the fano cover}
    X = \spec_{\mathscr{O}_Y}\left(\mathscr{O}_Y\oplus \mathscr{L}^{-1}\oplus \dotsi\oplus (\mathscr{L}^{-1})^{\otimes (r-1)}\right).
\end{equation}
In a local affine chart $\spec(A)\subseteq Y$, the section $\sigma$ may be regarded as a local function $f\in A$. The covering map is then modeled by
\begin{equation}\label{local-model}
    \spec\left(A[t]/(t^r-f)\right) \rightarrow \spec(A).
\end{equation}
In particular, $X$ is a smooth projective variety. The ramification divisor of $\phi$ satisfies the linear equivalence (see (\ref{local-model}))
\begin{equation}\label{ramification locus}
    rR = (r-1)\phi^{-1}(D_Y),
\end{equation}
where $\phi^{-1}(D_Y)$ is the (non-reduced) pre-image of the branch locus. We denote by $D_X$ the reduced form of the ramification locus. It corresponds to $(t=0)$ in the local model (\ref{local-model}). By the Riemann-Hurwitz formula, we have that
\begin{align}\label{closed riemann hurwitz}
    c_1(X) &= \phi^*c_1(Y) - R \\
           &= \phi^*\left(c_1(Y)-(r-1)\mathscr{L} \right).\notag
\end{align}
Note in particular that if $D_Y$ is anti-canonical, then so is $D_X$.
\begin{corollary}
If $c_1(Y) - D_Y$ is nef, then $X$ is Fano.
\end{corollary}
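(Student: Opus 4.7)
The plan is to combine the Riemann–Hurwitz formula (\ref{closed riemann hurwitz}) with standard facts from algebraic geometry about nef plus ample line bundles and pullbacks under finite morphisms. Showing $X$ is Fano amounts to showing $c_1(X)$ is the first Chern class of an ample line bundle, and (\ref{closed riemann hurwitz}) expresses $c_1(X)$ as the pullback via $\phi$ of a divisor on $Y$, so it suffices to show that divisor is ample and then invoke the fact that finite pullbacks preserve ampleness.

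More precisely, I would proceed as follows. First, using that $\sigma$ is a section of $\mathscr{L}^{\otimes r}$ with zero locus $D_Y$, we have the linear equivalence $D_Y \sim r\mathscr{L}$ on $Y$. Substituting into (\ref{closed riemann hurwitz}) gives
\begin{equation*}
c_1(X) \;=\; \phi^{*}\bigl(c_1(Y) - (r-1)\mathscr{L}\bigr) \;=\; \phi^{*}\bigl((c_1(Y) - D_Y) + \mathscr{L}\bigr).
\end{equation*}
By hypothesis $c_1(Y) - D_Y$ is nef, and $\mathscr{L}$ is ample (it is part of the cyclic covering datum). Since the sum of a nef and an ample line bundle is ample, the divisor class $c_1(Y) - (r-1)\mathscr{L}$ is ample on $Y$.

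Finally, the covering map $\phi:X\to Y$ is a finite morphism (it is given locally by the model (\ref{local-model}), which is a finite ring extension of degree $r$). Since the pullback of an ample line bundle under a finite morphism is ample (this is a standard consequence of the Nakai–Moishezon criterion, or equivalently of the fact that a line bundle is ample iff some power is very ample and pulls back to a line bundle whose sections separate points and tangent vectors on a finite cover), we conclude that $\phi^{*}(c_1(Y) - (r-1)\mathscr{L}) = c_1(X)$ is ample. Hence $X$ is Fano. There is no serious obstacle in this argument; the only care needed is to correctly identify $D_Y$ with $r\mathscr{L}$ as divisor classes and to cite the finite-pullback-preserves-ampleness result, both of which are routine.
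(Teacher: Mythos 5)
Your proof is correct and reaches the conclusion by the same overall route as the paper: express $c_1(X)$ (or a positive multiple) via Riemann--Hurwitz as $\phi^*$ of a divisor class on $Y$, check that class is ample, and use that finite pullback preserves ampleness. The one genuine difference is the decomposition of the divisor. You write $c_1(Y) - (r-1)\mathscr{L} = (c_1(Y)-D_Y) + \mathscr{L}$ and take $\mathscr{L}$ as the ample part, so the only positivity you need is the ampleness of $\mathscr{L}$ (part of the covering datum) and the nefness of $c_1(Y)-D_Y$ (the hypothesis). The paper instead multiplies through by $r$ and writes $rK_X^{-1} = \phi^*\bigl(K_Y^{-1} + (r-1)(K_Y^{-1}-D_Y)\bigr)$, taking $K_Y^{-1}$ as the ample part and thereby also invoking that $Y$ is Fano. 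Both are correct applications of ``ample plus nef is ample''; your version is marginally leaner because it avoids the factor of $r$ and does not call on the Fano hypothesis on $Y$ at this step, while the paper's version makes the appearance of $K_Y^{-1}-D_Y$ (the key positivity class throughout the section) more visible.
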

\begin{proof}
Using the Riemann-Hurwitz formula as in (\ref{closed riemann hurwitz}),
\begin{equation*}
    rK_{X}^{-1} = \phi^*\left(K_Y^{-1}+(r-1)(K_Y^{-1}-D_Y) \right).
\end{equation*}
The divisor $K_Y^{-1}+(r-1)(K_Y^{-1}-D_Y)$ is ample by the Nakai-Moishezon criterion. Its pullback by the finite map $\phi$ is therefore ample.
\end{proof}

Let $L_Y\subseteq Y\backslash D_Y$ be a Lagrangian torus. Denote by $L_X = \phi^{-1}(L_Y)\subseteq X$ its pre-image.
\begin{proposition}\label{Kahler form}
For each neighborhood $U\subseteq X\backslash L_X$ of $D_X$, there is a function $\rho:X\rightarrow \mathbb{R}$ with compact support in $U$ such that the $2$-form
\begin{equation}
    \omega_X = \phi^*\omega_Y + dd^c\rho
\end{equation}
is K\"ahler. In particular, $L_X\subseteq (X,\omega_X)$ is Lagrangian.
\end{proposition}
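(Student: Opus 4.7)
The plan is to construct $\rho$ as a small multiple of the squared norm of a holomorphic defining section of $D_X$, truncated by a cutoff function so that its support lies in $U$. The first step is to analyze $\phi^*\omega_Y$ in local holomorphic coordinates near $D_X$: choosing $(t, z_1, \ldots, z_{n-1})$ in which $D_X = \{t = 0\}$ and $\phi(t, z) = (t^r, z)$, the identity $\phi^* dw = r t^{r-1} dt$ (where $w$ is a transverse coordinate on $Y$) shows that every entry in the first row and column of the Hermitian matrix of $\phi^*\omega_Y$ vanishes to order $r-1$ at $t = 0$, while the tangent-tangent block restricts to the Hermitian metric of $\omega_Y|_{D_Y}$. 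Thus $\phi^*\omega_Y$ is a closed smooth semipositive $(1,1)$-form on $X$, strictly positive off $D_X$, whose kernel at each point of $D_X$ is the one-dimensional normal direction $\langle \partial_t \rangle$.

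To cancel this degeneracy, I would equip the line bundle $\mathscr{O}_X(D_X)$ with any smooth Hermitian metric $h$, let $s$ be the tautological section cutting out $D_X$, and set $f = |s|_h^2$. In the coordinates above, $f = e^{-2\psi(t,z)}|t|^2$ for some smooth real $\psi$, and a direct computation gives
\begin{equation*}
dd^c f\,\big|_{D_X} = i\, e^{-2\psi(0,z)}\, dt \wedge d\bar t,
\end{equation*}
which is positive in exactly the $\partial_t$-direction and has no mixed or tangent-tangent components at $t = 0$ (because $f$ and $df$ both vanish along $D_X$). Consequently, at each $p \in D_X$ the Hermitian forms $\phi^*\omega_Y|_p$ and $dd^c f|_p$ have complementary block supports, so their sum is positive definite for every $\epsilon > 0$. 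I would then fix a compact neighborhood $\overline{V_2}$ of $D_X$ with $\overline{V_2} \subseteq U$, a smaller open neighborhood $V_1$ with $\overline{V_1} \subset V_2$, and a smooth cutoff $\chi \colon X \to [0, 1]$ with $\chi \equiv 1$ on $V_1$ and $\mathrm{supp}(\chi) \subseteq V_2$, and set $\rho \defeq \epsilon\, \chi\, f$ for an $\epsilon > 0$ to be chosen at the end.

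The main step, and the main obstacle, will be to verify that $\omega_X \defeq \phi^*\omega_Y + dd^c\rho$ is positive definite for an appropriate choice of $V_1$ and $\epsilon$. The key local estimate is a Schur-complement computation near $D_X$: the off-diagonal entries of $\phi^*\omega_Y$ are of order $|t|^{r-1}$ and its normal-normal entry of order $|t|^{2(r-1)}$, while $dd^c f$ differs from the positive form $i\, e^{-2\psi(0,z)}\, dt \wedge d\bar t$ by terms that vanish as $|t|\to 0$, so a direct computation shows that $\phi^*\omega_Y + \epsilon\, dd^c f$ is positive definite on a uniform open neighborhood $N$ of $D_X$ for all sufficiently small $\epsilon > 0$; choose $V_1 \subseteq N$. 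On the compact annular region $\overline{V_2} \setminus V_1$ the form $\phi^*\omega_Y$ is bounded below by a positive Hermitian form, while $dd^c\rho$ satisfies a uniform $C^0$ bound of the form $C\epsilon$ with $C$ independent of $\epsilon$, so shrinking $\epsilon$ further secures positivity there; off $V_2$, $\rho$ vanishes and $\omega_X = \phi^*\omega_Y$ is already positive. Finally, since $\mathrm{supp}(\rho) \subseteq U$ is disjoint from $L_X$, the form $dd^c\rho$ vanishes on a neighborhood of $L_X$, and $\omega_X|_{L_X} = \phi^*\omega_Y|_{L_X} = 0$ because $\phi|_{L_X}$ factors through the Lagrangian $L_Y \subseteq Y$; hence $L_X$ is Lagrangian in $(X, \omega_X)$.
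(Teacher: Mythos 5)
Your proof is correct, but it takes a genuinely different route from the paper. The paper's argument is algebraic: it first treats the case where $[\omega_Y]$ is rational, invoking the fact that the finite pullback of an ample line bundle is ample (so that $\phi^*[\omega_Y]$ is again a K\"ahler class), and then reduces the general case to the rational one using $H^2(Y,\mathscr{O}_Y)=0$ (Fano) to express $[\omega_Y]$ as a convex combination of rational K\"ahler classes. Your argument, by contrast, is a direct local Hermitian-matrix computation: you identify exactly where $\phi^*\omega_Y$ degenerates (a rank-one normal degeneracy along $D_X$, with off-diagonal decay $O(|t|^{r-1})$ and normal-normal decay $O(|t|^{2(r-1)})$), observe that $dd^c|s|_h^2$ at $D_X$ is supported precisely in the complementary normal block, and run a Schur-complement estimate to show $\phi^*\omega_Y + \epsilon\,dd^c f$ is positive definite on a uniform tubular neighborhood for small $\epsilon$, then cut off and tune $\epsilon$ for the annular region. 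This buys you a self-contained proof that makes no use of the rationality of $[\omega_Y]$ or of $H^2(Y,\mathscr{O}_Y)=0$, and hence works verbatim for non-Fano $Y$; the paper's route is shorter but leans on the earlier Lemma 2.15 of the cited companion paper and on the Fano hypothesis. The Schur-complement step is the one place you should be careful to phrase precisely: the neighborhood $N$ and the threshold $\epsilon_0$ must be chosen in the right order (fix $N$ small enough that the $O(|t|)$ correction to the normal-normal entry of $dd^c f$ and the off-diagonal cross terms are absorbed, then shrink $\epsilon$ to control the $\epsilon^2$ term and the annular-region perturbation), and one should note that the $\phi^*\omega_Y$-contribution to the Schur complement is itself nonnegative since $\omega_Y$ is positive definite, so it only helps. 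As you have written it, the logic is sound.
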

\begin{proof}
If $[\omega_Y]\in H^2(Y,\mathbb{R})$ is rational, i.e. the curvature of some \emph{ample} line bundle on $Y$ (up to a factor of $i/2\pi$), then this is just Lemma 2.15 in \cite{mypaper1}: one exploits the fact that a finite pull-back of an ample line bundle is ample.

More generally, since $Y$ is Fano, $H^2(Y,\mathscr{O}_Y)=0$ so that any K\"ahler form is a convex combination of rational K\"ahler forms.
\end{proof}

The Lagrangian $L_X$ is a covering of the torus $L_Y$, so it is a disjoint union of tori. In the examples we consider, $L_X$ will always be connected. This is due to the following result.
\begin{proposition}\label{connected pre-image}
Suppose that $L_Y$ bounds a (topological) disc $v$ such that $v\cdot D_Y = 1$. Then $L_X\subseteq X$ is connected.
\end{proposition}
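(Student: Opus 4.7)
\textit{Proof proposal.} The plan is to reduce connectedness of $L_X$ to a monodromy computation for the covering $\phi|_{\phi^{-1}(L_Y)}:L_X\to L_Y$. Since $L_Y\subseteq Y\backslash D_Y$ is disjoint from the branch locus, this restriction is an unramified degree-$r$ covering, and the number of its connected components equals the number of orbits of the monodromy action of $\pi_1(L_Y)$ on a fiber (which has $r$ points). Connectedness of $L_X$ is therefore equivalent to surjectivity of the monodromy homomorphism $\pi_1(L_Y)\to \mathbb{Z}/r$.

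First I would identify the monodromy explicitly. The cover $\phi:X\backslash D_X\to Y\backslash D_Y$ is the unramified $r$-fold cyclic cover classified by the surjection
\begin{equation*}
H_1(Y\backslash D_Y;\mathbb{Z})\twoheadrightarrow\mathbb{Z}/r
\end{equation*}
which sends a small meridian loop around $D_Y$ to $1\in\mathbb{Z}/r$. This follows directly from the local model \eqref{local-model}: in a transverse slice the cover is $t\mapsto t^r$, whose monodromy around $0$ is a generator of $\mathbb{Z}/r$. Consequently, for any loop $\gamma\subseteq Y\backslash D_Y$ that bounds a topological disc $w:(\mathbb{D},\partial\mathbb{D})\to(Y,\gamma)$ which is transverse to $D_Y$, the monodromy of $\gamma$ in the cover equals the intersection number $w\cdot D_Y$ modulo $r$.

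Next I apply this to the disc $v$ provided by the hypothesis. The boundary loop $\gamma\defeq\partial v$ lies in $L_Y$, and after a small perturbation rel-boundary we may assume $v$ is transverse to $D_Y$ with algebraic intersection number $1$. The monodromy of $\gamma$ in the cover is therefore $1\in\mathbb{Z}/r$, a generator. In particular the composition
\begin{equation*}
\pi_1(L_Y)\longrightarrow\pi_1(Y\backslash D_Y)\longrightarrow\mathbb{Z}/r
\end{equation*}
hits a generator and is thus surjective.

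The main (and essentially only) obstacle is justifying that the meridian class in $H_1(Y\backslash D_Y)$ indeed maps to a generator of the deck group; this is the standard description of cyclic branched covers via intersection numbers, and with the local model \eqref{local-model} it is immediate. Given surjectivity, the $\mathbb{Z}/r$-action on any fiber of $L_X\to L_Y$ is transitive, so $L_X$ has exactly one connected component, completing the proof.
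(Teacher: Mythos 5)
Your proof is correct, and it takes a genuinely different route from the paper's. The paper proves the stronger statement that $\pi_1(L_Y)\to\pi_1(Y\backslash D_Y)$ is \emph{surjective}, from which connectedness of $L_X$ in \emph{any} unbranched cover follows; this requires establishing that $\pi_1(Y\backslash D_Y)$ is Abelian (via simple connectedness and Lefschetz for $n>2$, and via Zariski--Nori for $n=2$), computing it by Poincar\'e duality as a cokernel, and identifying it with $\mathbb{Z}_q$ via a linking number. Your argument is more economical: it never needs to know $\pi_1(Y\backslash D_Y)$ at all, only the image of $\pi_1(L_Y)$ under the classifying map of the specific $\mathbb{Z}/r$-cover, and since the target is already Abelian that map factors through $H_1$ automatically. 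Capping off the disc $v$ near its $D_Y$-intersections shows that $\partial v$ is homologous in $Y\backslash D_Y$ to $(v\cdot D_Y)$ meridians, and the local model $t\mapsto t^r$ pins the meridian monodromy to a generator, so the restriction to $\pi_1(L_Y)$ is already surjective. What you give up is the general statement about arbitrary unbranched covers, but for the proposition as stated that generality is unnecessary, and in exchange you avoid the $n=2$ citation to Zariski/Nori entirely. One small point worth flagging for completeness: you implicitly use that all meridians of $D_Y$ have the same monodromy; this holds here either because $D_Y$ is connected (a smooth ample divisor in a projective manifold of dimension $\geq 2$), or more directly because the cover is defined by a single global $r$-th root whose local order of vanishing along each branch of $D_Y$ is $1$.
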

\begin{proof}
Consider the case when $n>2$, and let $\overset{\circ}{N}_{\epsilon} = B_{\epsilon}(D_Y)\backslash D_Y$ be a punctured neighborhood of $D_Y$. Since $Y$ is Fano, it must be simply connected. Therefore, the map
\begin{equation*}
    \pi_1(\overset{\circ}{N}_{\epsilon})\rightarrow \pi_1(Y\backslash D_Y)
\end{equation*}
is surjective. By the Lefschetz hyperplane theorem, $D_Y$ is simple connected. Therefore, there is surjective map
\begin{equation*}
    \pi_1(S^1)\rightarrow \pi_1(\overset{\circ}{N}_{\epsilon})
\end{equation*}
coming from the homotopy long exact sequence for the bundle $\overset{\circ}{N}_{\epsilon}\rightarrow D_Y$. It follows that $\pi_1(Y\backslash D_Y)$ is Abelian.

If $n=2$,  $\pi_1(Y\backslash D_Y)$ is still Abelian (due to work of Zariski \cite{zariski-branched-covers}) though it is far less trivial. See \cite[Theorem II]{nori} for an explicit statement.

Since it is Abelian, the fundamental group $\pi_1(Y\backslash D_Y)$ can be computed using Poincar\'e duality:
\begin{align*}
    \pi_1(Y\backslash D_Y) &= H_1(Y\backslash D_Y,\mathbb{Z}) = H^{2n-1}(Y,D_Y,\mathbb{Z})\\
    &= \text{coker}(H^{2n-2}(Y,\mathbb{Z})\rightarrow H^{2n-2}(D_Y,\mathbb{Z})).
\end{align*}
Let $q=\inf\{ \abs{\beta\cdot D_Y} \ |\ \beta\in H_2(Y,\mathbb{Z})\}$. We therefore get an isomorphism
\begin{equation*}
    \text{lk}: \pi_1(Y\backslash D_Y) \rightarrow \mathbb{Z}_q,\quad \gamma \mapsto v_\gamma\cdot D_Y,
\end{equation*}
where $v_\gamma$ is any disc map whose boundary is $\gamma$. Since $L_Y$ bounds a disc $v$ such that $\text{lk}(v)=1$, the group homomorphism
\begin{equation*}
    \pi_1(L_Y) \rightarrow \pi_1(Y\backslash D_Y)
\end{equation*}
is surjective. It follows that $L_Y$ has a connected pre-image in any \emph{unbranched} covering of $Y\backslash D_Y$.
\end{proof}
Our goal is to relate counts of Maslov index $2$ discs associated with the pairs $(X,L_X)$ and $(Y,L_Y)$. Recall the following open analogue of the Riemann-Hurwitz formula.

\begin{lemma}\label{Riemann Hurwitz}
Let $u:\disk\rightarrow (X,L_X)$ be a disc map and $v\defeq \phi\circ u :\disk\rightarrow (Y,L_Y)$ its pushforward. Then,
\begin{equation*}
    \frac{1}{2}\mu_{L_X}(u) = \frac{1}{2}\mu_{L_Y}(v) - \frac{r-1}{r} v\cdot D_Y.
\end{equation*}
\end{lemma}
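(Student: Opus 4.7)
The plan is to reduce everything to Chern numbers of pulled-back determinant line bundles with boundary trivializations induced by Lagrangian orientations, then apply (closed) Riemann--Hurwitz on $X$. Recall that $\mu_{L_X}(u)$ can be computed as twice the count of zeros of a generic section of the complex line bundle $u^*\det_{\mathbb{C}} TX \to \mathbb{D}$ whose restriction to $\partial\mathbb{D}$ matches the real orientation of $\det_{\mathbb{R}} TL_X$; in particular, it is $2c_1$ of this line bundle relative to the Lagrangian boundary trivialization. The analogous statement holds for $\mu_{L_Y}(v)$ with $v^*\det_{\mathbb{C}} TY = u^*\phi^*\det_{\mathbb{C}} TY$.

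First I would invoke the (closed) Riemann--Hurwitz identity, which in the present setting reads
\begin{equation*}
\phi^*\det{}_{\mathbb{C}} TY \;\cong\; \det{}_{\mathbb{C}} TX \,\otimes\, \mathscr{O}_X\bigl((r-1)D_X\bigr),
\end{equation*}
the isomorphism being realized by $\det(d\phi)$, whose vanishing locus is precisely the ramification divisor $R = (r-1)D_X$. Pulling this back by $u$ gives an isomorphism of line bundles over $\mathbb{D}$. The next step is to check that, along the boundary, the orientation trivialization of $v^*\det_{\mathbb{C}} TY$ coming from $L_Y$ matches the tensor product of the orientation trivialization of $u^*\det_{\mathbb{C}} TX$ coming from $L_X$ with the canonical non-vanishing trivialization of $u^*\mathscr{O}_X((r-1)D_X)$ given by the defining section (the latter is nowhere zero on $\partial\mathbb{D}$ because $L_X \cap D_X = \emptyset$). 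This is a direct consequence of the fact that $\phi$ is unramified in a neighborhood of $L_X$, so $d\phi: TL_X \to TL_Y$ is a real linear isomorphism, and hence $\det(d\phi)$ takes the given orientation of $\det_{\mathbb{R}} TL_X$ to the given orientation of $\det_{\mathbb{R}} TL_Y$ (positively).

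Once boundary trivializations are matched, additivity of the first Chern number under tensor product yields
\begin{equation*}
\tfrac{1}{2}\mu_{L_Y}(v) \;=\; \tfrac{1}{2}\mu_{L_X}(u) + c_1\!\bigl(u^*\mathscr{O}_X((r-1)D_X),\, \text{canonical bdry triv.}\bigr) \;=\; \tfrac{1}{2}\mu_{L_X}(u) + (r-1)\,(u\cdot D_X).
\end{equation*}
Finally, to replace $u\cdot D_X$ by $v\cdot D_Y$, I would use the divisor identity $rD_X = \phi^*D_Y$ (equivalent to \eqref{ramification locus} after passing to reduced structure, since $\phi$ has ramification index $r$ along $D_X$) to conclude $r\,(u\cdot D_X) = u\cdot \phi^*D_Y = v\cdot D_Y$. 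Substituting and rearranging gives the claimed formula.

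The main subtlety I expect is the boundary-trivialization bookkeeping in the middle paragraph: one has to confirm that the three trivializations (from $L_X$, from $L_Y$, and the canonical one on $\mathscr{O}((r-1)D_X)$ away from $D_X$) fit together consistently under $\det(d\phi)$, so that first Chern numbers add. Everything else is formal manipulation with Chern classes and intersection numbers.
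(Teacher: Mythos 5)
Your argument is correct and follows essentially the same route as the paper: both compute the Maslov index as a count of zeroes of a boundary-trivialized section of $\bigwedge^n_{\mathbb{C}}u^*TX$, transport it to $Y$ via $\wedge^n d\phi$ (which realizes the Riemann--Hurwitz isomorphism and vanishes precisely on $R=(r-1)D_X$), use that $d\phi$ is an orientation-preserving isomorphism along $L_X$ to match boundary trivializations, and then convert $u\cdot D_X$ to $v\cdot D_Y$ using $rD_X=\phi^*D_Y$. The paper phrases it directly in terms of the pushed-forward section $\phi_*(s)=u^*(\wedge^n d\phi)\circ s$, whereas you phrase it as additivity of relative $c_1$ under tensor product; these are the same computation.
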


\begin{proof}
Let $s$ be a generic section of the line bundle $\wedge_{\mathbb{C}}^n u^*TX$ whose restriction $s_{|\partial\mathbb{D}}$ to the boundary is a nowhere vanishing section of the subbundle $\wedge_{\mathbb{R}}^n u_{|\partial\mathbb{D}}^*TL_X$.
Then $1/2\mu_{L_X}(u)$ is the signed count of the zeroes of $s$. Similarly, $1/2\mu_{L_X}(v)$ is the signed count of zeroes of the section $\phi_*(s)\defeq u^*(\wedge^n d\phi)\circ s$ of the bundle pair $(\wedge_{\mathbb{C}}^n v^*TY,\wedge_{\mathbb{R}}^n v^*TL_Y)$. By definition, $\wedge^n d\phi$ is a section of the line bundle $(\wedge^n TX)^{-1} \otimes \wedge^n \phi^*TY$ whose zero set (with multiplicity) is the ramification locus $R$. It follows that
\begin{equation*}
    \frac{1}{2}\mu_{L_X}(u) = \frac{1}{2}\mu_{L_Y}(v) - u\cdot R.
\end{equation*}
The Lemma now follows from the computation of $R$ in (\ref{ramification locus}).
\end{proof}
Note the following special case of the previous construction.
\begin{corollary}\label{monotone case}
Suppose that $D_Y\subseteq Y$ is anti-canonical, and that the Maslov class of $L_Y\subseteq Y\backslash D_Y$ is trivial. Then $D_X\subseteq X$ is anti-canonical, and the Maslov class of $L_X\subseteq X\backslash D_X$ is trivial as well. If $L_Y\subseteq (Y,\omega_Y)$ is also monotone, then so is $L_X\subseteq (X,\omega_X)$.
\end{corollary}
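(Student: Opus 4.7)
\textit{Proof proposal.} The corollary splits into three claims, which I plan to treat in sequence.

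First, I would verify that $D_X$ is anti-canonical. Since $\sigma$ is a section of $\mathscr{L}^r$ and $D_Y=\sigma^{-1}(0)$, the class $[D_Y]$ equals $r[\mathscr{L}]$, so the hypothesis $c_1(Y)=[D_Y]$ becomes $c_1(Y)=r[\mathscr{L}]$. Plugging into the Riemann--Hurwitz formula \eqref{closed riemann hurwitz},
\begin{equation*}
c_1(X)=\phi^*\bigl(c_1(Y)-(r-1)[\mathscr{L}]\bigr)=\phi^*[\mathscr{L}].
\end{equation*}
Using $\phi^{-1}(D_Y)=rD_X$ as divisors on $X$ (visible in the local chart \eqref{local-model}), one has $r\,\phi^*[\mathscr{L}]=\phi^*[D_Y]=r[D_X]$, so $\phi^*[\mathscr{L}]=[D_X]$ and $c_1(X)=[D_X]$.

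Next I would show triviality of the Maslov class $\eta_{L_X}$. The complement $X\setminus D_X$ inherits a holomorphic volume form $\Omega_X\defeq\phi^*\Omega_Y$, where $\Omega_Y$ is a holomorphic volume form on $Y\setminus D_Y$ (which exists by part one): indeed $\phi$ restricts to an étale cover $X\setminus D_X\rightarrow Y\setminus D_Y$, so $\phi^*\Omega_Y$ is nowhere vanishing. For any $p\in L_X$, the differential $d\phi_p:T_pL_X\rightarrow T_{\phi(p)}L_Y$ is an orientation-preserving linear isomorphism, and comparing the defining relations of $\gamma_X$ and $\gamma_Y$ then gives $\gamma_X=\gamma_Y\circ\phi_{|L_X}$. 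Pulling back the generator $[d\theta]$ yields $\eta_{L_X}=\phi^*\eta_{L_Y}$, which vanishes under the hypothesis $\eta_{L_Y}=0$.

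Finally I would establish monotonicity. Let $\beta\in H_2(X,L_X)$ be represented by a disc $u$, with pushforward $v=\phi\circ u$ of class $\phi_*\beta\in H_2(Y,L_Y)$. Because $\rho$ is compactly supported away from $L_X$, we have $\int_u dd^c\rho=\int_{\partial u}d^c\rho=0$, so $\omega_X(\beta)=\omega_Y(\phi_*\beta)$. Using $\eta_{L_Y}=0$ in the identity \eqref{Maslov identity} applied to $v$ in $Y$ gives $\mu_{L_Y}(v)=2\,v\cdot D_Y$; combined with Lemma \ref{Riemann Hurwitz} this yields
\begin{equation*}
\mu_{L_X}(\beta)=\tfrac{2}{r}(\phi_*\beta)\cdot D_Y=\tfrac{1}{r}\mu_{L_Y}(\phi_*\beta).
\end{equation*}
If $L_Y$ is monotone with constant $\lambda_Y$, then $\omega_X(\beta)=\lambda_Y\mu_{L_Y}(\phi_*\beta)=r\lambda_Y\,\mu_{L_X}(\beta)$, so $L_X$ is monotone with constant $r\lambda_Y$.

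The only real subtlety is the second step, where one must confirm that $\phi^*\Omega_Y$ is genuinely a holomorphic volume form on all of $X\setminus D_X$ (as opposed to having hidden zeroes or poles along $D_X$). The local computation in coordinates where $D_Y=\{x_1=0\}$ and $X$ is $\{t^r=x_1\}$ shows $\phi^*(dx_1\wedge\dots\wedge dx_n/x_1)=r\,dt\wedge dx_2\wedge\dots\wedge dx_n/t$, which is exactly a volume form on $X\setminus D_X$ with a simple pole along $\{t=0\}$, confirming the claim globally.
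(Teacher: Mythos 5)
Your proof is correct, and two of the three parts follow the paper closely. The notable difference is in the second step: to show $\eta_{L_X}=0$, you argue directly from the definitions by comparing holomorphic volume forms, setting $\Omega_X=\phi^*\Omega_Y$ on the \'etale locus $X\setminus D_X$ and checking in local coordinates that this has exactly a first-order pole along $D_X$, then concluding $\eta_{L_X}=(\phi|_{L_X})^*\eta_{L_Y}=0$. The paper takes a shorter route: it uses the equivalence furnished by \eqref{Maslov identity}, namely that $\eta_L=0$ is the same as $\tfrac12\mu_L(u)=u\cdot D$ for all discs, and reads this off for $L_X$ directly from Lemma \ref{Riemann Hurwitz} together with $\phi^{-1}(D_Y)=rD_X$. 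That second route is more economical here because the same Riemann--Hurwitz bookkeeping is needed for the monotonicity claim anyway — as you yourself discover when you invoke \eqref{Maslov identity} for $L_Y$ and Lemma \ref{Riemann Hurwitz} in your step three, at which point $\eta_{L_X}=0$ already falls out via \eqref{Maslov identity} applied to $L_X$, making your separate volume-form argument logically redundant. That said, the volume-form argument is instructive in its own right since it exhibits the grading datum on $X\setminus D_X$ explicitly; one small caveat is that $\gamma_X$ and $\gamma_Y\circ\phi|_{L_X}$ agree only up to a positive real factor (the Jacobian of $\phi|_{L_X}$), not on the nose as you wrote, though this does not affect the pullback relation $\eta_{L_X}=(\phi|_{L_X})^*\eta_{L_Y}$ since positive scalars are homotopically trivial in $\mathbb{C}^*$.
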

\begin{proof}
Recall (see \eqref{Maslov identity}) that $L_Y\subseteq Y\backslash D_Y$ has a trivial Maslov class if and only if 
\begin{equation*}
    \frac{1}{2}\mu_{L_Y}(v) = v\cdot D_Y, \quad\hbox{for all}\ v:\disk\rightarrow(Y,L_Y).
\end{equation*}
Using Lemma \ref{Riemann Hurwitz}, the identity in (\ref{ramification locus}), and the fact that $R=(r-1)D_X$, we deduce that
\begin{equation*}
\frac{1}{2}\mu_{L_X}(u) = \frac{1}{r}v\cdot D_Y  =  u\cdot D_X.
\end{equation*}
It follows that $L_X\subseteq X\backslash D_X$ has a trivial Maslov class. 

If $L_Y\subseteq (Y,D_Y)$ is monotone, then there is a constant $\lambda$ such that
\begin{equation*}
    \frac{\lambda}{2}\mu_{L_Y}(v) = \int_{\mathbb{D}} v^*\omega_Y \quad\hbox{for all}\ v:\disk\rightarrow(Y,L_Y).
\end{equation*}
It follows (using Proposition \ref{Kahler form}) that for any disc map $u:\disk\rightarrow (X,L_X)$,
\begin{equation*}
      \int_{\mathbb{D}} u^*\omega_X =\frac{\lambda r}{2}\mu_{L_X}(u).
\end{equation*}
\end{proof}
\subsection{Super-potentials} 
From now on, we assume that the Lagrangian $L_Y\subseteq Y\backslash D_Y$ is Maslov positive. Recall this positivity means that
\begin{equation}\label{Maslov positive-2}
    \frac{1}{2}\mu_L(v)\geq \max\{1,v\cdot D_Y\}
\end{equation}
for all $I_Y$-holomorphic discs $v:\disk\rightarrow (Y,L_Y)$. It ensures that there are no non-constant $J$-holomorphic discs of Maslov index $0$ for sufficiently small perturbations $J\in\mathscr{J}_N(Y,\omega_Y)$ of $I_Y$. Therefore, one can define the numerical invariants
\begin{equation}\label{maslov index 2 counts}
    m_{0,\beta}(L_Y) = \deg\left(\mathscr{M}^J_{0,1}(L_Y,\beta)\xrightarrow[]{ev} L_Y \right)
\end{equation}
as before, where $\mathscr{M}^{J}_{0,1}(L_Y,\beta)$ is the moduli-space of (unparametrized) $J$-holomorphic discs in the class $\beta\in H_2(Y,L_Y)$ with $1$ boundary marked point. See the proof of Lemma \ref{count agrees with classical count} for a discussion about regularity and compactness of this moduli space.

\begin{remark}
Note that, \`a-priori, the integers $m_{0,\beta}(L_Y)$ depend on $I_Y$: if a perturbation $(J_t)_{0\leq t\leq 1}$ is large, then $J_1$ may allow for pseudo-holomorphic discs with Maslov index $0$. This issue does not arise in the examples we study: our choices of $L_Y$  will be \emph{admissible} with respect to an anti-canonical divisor of the form $D_Y\cup \widehat{D}$.
\end{remark}

Recall that the super-potential associated with $L_Y$ is \begin{equation*}
    W_{L_Y} = \sum_{\mu_{L_Y}(\beta)=2} m_{0,\beta}(L_Y)q^{\partial \beta},
\end{equation*}
which we view as a function $W_{L_Y}:\spec(\mathbb{C}[H_1(L_Y,\mathbb{Z})])\rightarrow \mathbb{C}$. When $L_Y$ is a torus, $W_{L_Y}$ is a Laurent polynomial. Our goal is to relate the super-potentials associated with $L_Y$ and its pre-image $L_X$ under the cyclic covering map $\phi:X\rightarrow Y$.

Maslov positivity is inherited through cyclic covers. Indeed, the Riemann-Hurwitz formula from Lemma \ref{Riemann Hurwitz} can be rewritten as
\begin{equation}\label{Riemann Hurwitz 2}
    \frac{1}{2}\mu_{L_X}(u)-u\cdot D_X = \frac{1}{2}\mu_{L_Y}(v)-v\cdot D_Y.
\end{equation}
The following is therefore an immediate consequence.
\begin{corollary}\label{maslov positivity preserved under cyclic coverings}
$L_Y\subseteq Y\backslash D_Y$ is Maslov-positive if and only if $L_X\subseteq X\backslash D_X$ is Maslov-positive.
\end{corollary}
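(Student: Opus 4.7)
\emph{Proof plan.} Both directions follow from equation~\eqref{Riemann Hurwitz 2}, which says that the ``Maslov defect'' $\tfrac{1}{2}\mu_L(\cdot)-(\cdot)\cdot D$ is preserved under the pushforward $u\mapsto v=\phi\circ u$ of holomorphic discs. The plan is to exploit this invariance pointwise for paired discs in $X$ and $Y$, and for the backward direction to lift any disc in $Y$ to a branched cover in $X$.

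For $(\Rightarrow)$, I would take an arbitrary $I_X$-holomorphic disc $u:\mathbb{D}\to(X,L_X)$. Since $\phi$ is holomorphic, $v:=\phi\circ u$ is an $I_Y$-holomorphic disc on $L_Y$, so the hypothesis gives $\tfrac{1}{2}\mu_{L_Y}(v)\geq\max\{1,v\cdot D_Y\}$. Substituting into \eqref{Riemann Hurwitz 2} yields $\tfrac{1}{2}\mu_{L_X}(u)\geq u\cdot D_X$ at once. The remaining inequality $\tfrac{1}{2}\mu_{L_X}(u)\geq 1$ I would obtain by splitting on $u\cdot D_X$: if $u\cdot D_X\geq 1$, the previous bound already suffices; if $u\cdot D_X=0$, then the local model of $\phi$ (which acts as $w_1\mapsto w_1^r$ in the normal direction to $D_X$) forces $v\cdot D_Y=0$, and Lemma~\ref{Riemann Hurwitz} collapses to $\tfrac{1}{2}\mu_{L_X}(u)=\tfrac{1}{2}\mu_{L_Y}(v)\geq 1$.

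For $(\Leftarrow)$, the plan is to associate to any $I_Y$-holomorphic disc $v:\mathbb{D}\to(Y,L_Y)$ the fiber product $\Sigma:=v^*X$, which (generically, assuming transverse intersections of $v$ with $D_Y$) is a connected smooth Riemann surface with boundary on $L_X$, equipped with a holomorphic degree-$r$ branched projection $\pi:\Sigma\to\mathbb{D}$ (full ramification $r$ over each point of $v^{-1}(D_Y)$) and a tautological $I_X$-holomorphic lift $\widetilde{u}:\Sigma\to X$ satisfying $\phi\circ\widetilde{u}=v\circ\pi$. Since \eqref{Riemann Hurwitz 2} is local in $\phi$, it extends verbatim to $\widetilde{u}$ and reads
\[
\tfrac{1}{2}\mu_{L_X}(\widetilde{u})-\widetilde{u}\cdot D_X \;=\; r\bigl(\tfrac{1}{2}\mu_{L_Y}(v)-v\cdot D_Y\bigr).
\]
When $v\cdot D_Y\leq 1$, Riemann--Hurwitz forces $\Sigma$ to be (a disjoint union of) disc(s), so $\widetilde{u}$ produces genuine $I_X$-holomorphic disc lifts to which Maslov positivity of $L_X$ applies directly, transferring both required inequalities to $v$.

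The main obstacle is the case $v\cdot D_Y\geq 2$, where $\Sigma$ has negative Euler characteristic and the disc-only definition of Maslov positivity does not apply to $\widetilde{u}$. The expected workaround is to degenerate $\widetilde{u}$ through a moduli family on $\Sigma$ into a nodal configuration of $I_X$-holomorphic discs on $L_X$, across which the Maslov defect is additive; non-negativity on each disc piece, guaranteed by Maslov positivity of $L_X$, then propagates to $\widetilde{u}$ and gives the sought inequality $\tfrac{1}{2}\mu_{L_Y}(v)\geq v\cdot D_Y$, while $\tfrac{1}{2}\mu_{L_Y}(v)\geq 1$ follows from Lemma~\ref{Riemann Hurwitz} since $\tfrac{r-1}{r}v\cdot D_Y\geq 1$ in this regime.
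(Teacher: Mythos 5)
Your $(\Rightarrow)$ argument is correct and is essentially the direction the paper dismisses as ``straightforward,'' carried out in full: equation \eqref{Riemann Hurwitz 2} gives $\tfrac12\mu_{L_X}(u)\geq u\cdot D_X$ directly, and the subcase $u\cdot D_X=0$ is handled exactly as you say. For $(\Leftarrow)$ you take a genuinely different route from the paper. The paper instead precomposes $v$ with the $r$-th power map $z\mapsto z^r$ and observes that the resulting disc admits an $I_X$-holomorphic lift $u:(\mathbb{D},\partial\mathbb{D})\to(X,L_X)$; applying \eqref{Riemann Hurwitz 2} to that single disc yields $r\bigl(\tfrac12\mu_{L_Y}(v)-v\cdot D_Y\bigr)=\tfrac12\mu_{L_X}(u)-u\cdot D_X\geq 0$ in one stroke, while $\tfrac12\mu_{L_Y}(v)\geq 1$ is handled separately when $v\cdot D_Y=0$ by lifting $v$ itself. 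This avoids both your case split and any degeneration.

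Your treatment of the case $v\cdot D_Y\geq 2$ is a genuine gap. You restrict to ``transverse intersections of $v$ with $D_Y$,'' but a fixed $I_Y$-holomorphic disc $v$ need not be transverse to $D_Y$, so one must handle arbitrary tangency orders and the resulting local ramification of $\Sigma=v^*X$. More fundamentally, the proposed ``degeneration of $\widetilde{u}$ through a moduli family on $\Sigma$ into a nodal configuration of discs'' is not constructed and is not likely to be constructible as stated: $\widetilde{u}$ is uniquely determined by the fixed disc $v$ as its tautological lift, so there is no family in which to degenerate it; a Gromov-type degeneration of a higher-genus bordered curve would generically produce spherical bubbles, which the disc-only inequality \eqref{Maslov positive} does not control; and $\Sigma$ can have positive genus (already a degree-$2$ cover of $\mathbb{D}$ branched at three interior points has genus~$1$, with a single boundary circle), so it is not a priori a nodal union of discs at all. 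You are right that additivity of the Maslov defect over a nodal decomposition would close the argument, but that decomposition has to be produced, and Maslov positivity of $L_X$ by itself does not produce it. As written, your argument establishes the claim only for $v\cdot D_Y\leq 1$.
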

\begin{proof}
Suppose $L_X\subseteq X\backslash D_X$ is Maslov-positive (the other direction is straightforward). Let $v$ be an $I_Y$-holomorphic disc. Then, the map
\begin{equation*}
    \disk\rightarrow (Y,L_Y):\quad z\mapsto v(z^r)
\end{equation*}
has an $I_X$-holomorphic lift $u:\disk\rightarrow (X,L_X)$. Using (\ref{Riemann Hurwitz 2}),
\begin{equation*}
   r\left( \frac{1}{2}\mu_{L_Y}(v)-v\cdot D_Y\right)= \frac{1}{2}\mu_{L_X}(u)-u\cdot D_X \geq 0.
\end{equation*}
If $v\cdot D_Y=0$, then $v$ itself has a lift $u$ and so $\mu_{L_Y}(v)=\mu_{L_X}(u)\geq 2$.
\end{proof}

Let $\beta\in H_2(X,L_X)$ be a class of Maslov index $2$ such that the moduli space $\mathscr{M}^{I_X}_{0,1}(L_X,\beta)$ is non-empty. Set
\begin{equation*}
\alpha = \phi_*(\beta)\in H_2(Y,L_Y,\mathbb{Z}).
\end{equation*}
By the inequality (\ref{Maslov positive-2}), $\beta\cdot D_X \in \{0,1\}$.   

\begin{lemma}\label{count up is count down 1}
Suppose that $\beta\cdot D_X = 0$. Then,
\begin{equation*}
 m_{0,\beta}(L_X)=m_{0,\alpha} (L_Y)   .
\end{equation*}
\end{lemma}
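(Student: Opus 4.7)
The plan is to use post-composition with $\phi$ to exhibit a sign-preserving bijection between the relevant fibers of the two evaluation maps, and conclude that the degrees agree.

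First, the Riemann--Hurwitz formula of Lemma \ref{Riemann Hurwitz}, combined with the hypothesis $\beta \cdot D_X = 0$, yields $\mu_{L_Y}(\alpha) = \mu_{L_X}(\beta) = 2$. From $\phi^{-1}(D_Y) = rD_X$ as divisors (see \eqref{ramification locus}) one obtains $\alpha \cdot D_Y = r(\beta \cdot D_X) = 0$, so by Maslov positivity (Corollary \ref{maslov positivity preserved under cyclic coverings}) every $I_Y$-holomorphic disc representing $\alpha$ is disjoint from $D_Y$. Consequently $\Phi(u) \defeq \phi \circ u$ defines a map $\mathscr{M}^{I_X}_{0,1}(L_X,\beta) \to \mathscr{M}^{I_Y}_{0,1}(L_Y,\alpha)$ fitting into the commutative square $\phi \circ ev_X = ev_Y \circ \Phi$.

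Fix a generic $p \in L_X$ and let $q \defeq \phi(p) \in L_Y$. The central claim is that $\Phi$ restricts to a bijection $ev_X^{-1}(p) \to ev_Y^{-1}(q)$. For injectivity, two elements $u_1, u_2 \in ev_X^{-1}(p)$ with $\Phi(u_1) = \Phi(u_2) = v$ are lifts of $v$ through the unbranched cover $\phi^{-1}(Y\backslash D_Y) \to Y\backslash D_Y$ agreeing at $\xi$; simple-connectedness of $\mathbb{D}$ forces $u_1 = u_2$. For surjectivity, given $v \in ev_Y^{-1}(q)$, simple-connectedness produces $r$ distinct lifts to $X$, one per preimage of $q$ in $L_X = \phi^{-1}(L_Y)$; let $u$ denote the unique lift with $u(\xi) = p$. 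Then $[u]$ satisfies $\phi_*[u] = \alpha$ and $[u] \cdot D_X = 0$; to see $[u] = \beta$, pick any $u_0 \in \mathscr{M}^{I_X}_{0,1}(L_X,\beta)$ (nonempty by hypothesis), connect $\Phi(u_0)$ to $v$ by a smooth path in $\mathscr{M}^{I_Y}_{0,1}(L_Y,\alpha)$, and lift uniquely under the marked-point constraint $ev(\cdot) = p$ to transport the class $\beta$ from $u_0$ to $u$.

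Finally, orientations match: $d\phi$ restricts to a fiberwise $\mathbb{C}$-linear bundle isomorphism $u^*TX \to (\phi \circ u)^*TY$ away from $D_X$, carrying $u|_{\partial\mathbb{D}}^*TL_X$ to $(\phi \circ u)|_{\partial\mathbb{D}}^*TL_Y$; this identifies the linearized Cauchy--Riemann operators at $u$ and $\Phi(u)$ together with their determinant-line trivializations induced from the spin structures. Summing signs over the fiber then gives $m_{0,\beta}(L_X) = m_{0,\alpha}(L_Y)$. The primary obstacle is the class-tracking step inside surjectivity, which hinges on path-connectivity within the relevant connected component of $\mathscr{M}^{I_Y}_{0,1}(L_Y,\alpha)$ between $\Phi(u_0)$ and an arbitrary $v \in ev_Y^{-1}(q)$; everything else is a routine unbranched-covering and linearization argument.
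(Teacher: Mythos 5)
Your proposal follows the same basic strategy as the paper --- post-composition with $\phi$ identifying moduli spaces --- but two steps need repair.

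\textbf{Regularity.} You work directly with the integrable complex structures $I_X$, $I_Y$, but the moduli spaces $\mathscr{M}^{I_X}_{0,1}(L_X,\beta)$ and $\mathscr{M}^{I_Y}_{0,1}(L_Y,\alpha)$ need not be Fredholm regular, so the pointwise fiber count and the sign-comparison are not a priori well-defined for these complex structures. The paper handles this by choosing a generic $J_Y \in \mathscr{J}_N(Y,\omega_Y)$ that agrees with $I_Y$ near $D_Y$, and pulling back to $J_X = \phi^{-1}J_Y$ (which extends over the ramification locus precisely because $J_Y = I_Y$ near $D_Y$); then both moduli spaces are regular and the post-composition map is a local diffeomorphism. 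Your linearization/orientation discussion is fine once this perturbation is in place, but you must say it.

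\textbf{Class-tracking.} The surjectivity argument --- ``connect $\Phi(u_0)$ to $v$ by a smooth path in $\mathscr{M}^{I_Y}_{0,1}(L_Y,\alpha)$ and lift'' --- has two problems. First, path-connectedness of the target moduli space is nowhere established. Second, and more importantly, the unique continuous lift of the path $(v_t)$ starting at $u_0$ does \emph{not} stay in the fiber $ev^{-1}(p)$: the path of marked-point evaluations $ev(v_t)$ is a loop in $L_Y$ based at $q$, and if this loop does not lift to a loop in $L_X$, the lifted path will terminate at the lift of $v$ through a \emph{different} preimage $p'$ of $q$. Since the $r$ lifts of a given disc represent classes in the $\mathbb{Z}_r$-orbit of $\beta$ under the deck group acting on $H_2(X,L_X)$, the lift of $v$ through $p$ may have class $g_*^{-1}\beta \neq \beta$. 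What actually needs to be shown (and what the paper's phrase ``covering map of degree $r$'' encodes) is that \emph{all} $r$ lifts of any disc in $\alpha$ lie in class $\beta$; with that in hand the bijection on fibers is immediate and the connectedness/path-lifting detour is unnecessary. The paper's degree argument --- $\pi$ and $\phi$ are both $r$-fold coverings fitting into a commutative square with the two evaluation maps, so $\deg(ev_X) = \deg(ev_Y)$ --- gets the conclusion without the lift-through-$p$ bookkeeping, but it too rests on that covering-degree claim, which is the genuine content here.
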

\begin{proof}
Since $\mu_{L_Y}(\alpha)=2$, there is an almost complex structure $J_Y$ on $Y$ such that $\mathscr{M}^{J_Y}_{0,1}(L_Y,\alpha)$ is Fredholm regular and
\begin{equation}\label{J trivial near divisor}
J_Y=I_Y \quad \hbox{near } D_Y.
\end{equation}
The condition (\ref{J trivial near divisor}) ensures that $J_X=\phi^{-1}J_Y$ extends through the ramification locus. Since $\beta\cdot D_X = 0$, the map
\begin{equation}\label{pushforward without hamiltonian perturbations}
    \mathscr{M}^{J_X}_{0,1}(L_X,\beta)\rightarrow \mathscr{M}^{J_Y}_{0,1}(L_Y,\alpha):\quad u\mapsto \phi\circ u
\end{equation}
is an orientation preserving proper local diffeomorphism. Hence, it is a covering map of degree $r$. The lemma now follows from the commutative diagram
\begin{center}
\begin{tikzcd}
\mathscr{M}^{J_X}_{0,1}(L_X,\beta)\arrow[d] \arrow[r, "ev"] & L_X \arrow[d,"\phi"]\\
\mathscr{M}^{J_Y}_{0,1}(L_Y,\alpha) \arrow[r,"ev"] & L_Y.
\end{tikzcd}
\end{center}
\end{proof}
If $\beta\cdot D_X = 1$, its pushforward satisfies the equations
\begin{equation*}
    \mu_{L_Y}(\alpha)=2r \quad \hbox{and}\quad \alpha\cdot D_Y = r.
\end{equation*}
The image of the map (\ref{pushforward without hamiltonian perturbations}) lands in the subspace of $\mathscr{M}^{J_Y}_{0,1}(L_Y,\alpha)$ consisting of discs that are tangent to the divisor $D_Y$ to order $r$.

\begin{lemma}\label{count up is count down 2}
Suppose that $\beta\cdot D_X = 1$. Then, 
\begin{equation*}
m_{0,\beta}(L_X)=\tau_{\alpha}(L_Y).    
\end{equation*}
\end{lemma}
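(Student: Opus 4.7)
The plan is to establish an orientation-preserving bijection between the gauge-fixed discs counted by $m_{0,\beta}(L_X)$ and the tangency discs counted by $\tau_\alpha(L_Y)$, with the pushforward $u \mapsto \phi \circ u$ as the correspondence.

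Following the setup of Lemma~\ref{count up is count down 1}, I would choose a generic $J_Y \in \mathscr{J}_N(Y,\omega_Y)$ and a domain-dependent perturbation $(J_{Y,z}) \in \mathscr{K}_{J_Y}$ achieving Fredholm regularity of $\mathscr{T}^{J_Y,\chi}_r(\alpha)$, with $J_Y = I_Y$ near $D_Y$ so that $J_X = \phi^*J_Y$ and $(J_{X,z}) = \phi^*(J_{Y,z})$ extend smoothly across $D_X$. Since $\beta \cdot D_X = 1$, every $u \in \mathscr{M}^{J_X}(L_X,\beta)$ meets $D_X$ transversely at a single interior point, and I would use $\mathrm{Aut}(\mathbb{D})$ to gauge-fix this point to $0 \in \mathbb{D}$ and to arrange $u(\chi) = p_X$, where $p_X$ is a chosen lift of a generic $p_Y \in L_Y$. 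This consumes the full three-dimensional gauge and identifies $m_{0,\beta}(L_X)$ with the signed count of such gauge-fixed $u$.

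The pushforward $v := \phi \circ u$ lands in $\mathscr{T}^{J_Y,\chi}_r(\alpha)$: in the local model $X = \{t^r = f\}$, a transverse intersection $t(u(z)) = cz + O(z^2)$ yields $f(v(z)) = c^r z^r + O(z^{r+1})$, giving tangency of order $r$. For the inverse, given such $v$ I would lift over $\mathbb{D} \setminus \{0\}$: the winding of $v$ around $D_Y$ equals $v \cdot D_Y = r$, hence trivial modulo the order of the deck group $\mathbb{Z}/r$, so a lift exists. The condition $u(\chi) = p_X$ picks out a unique lift (using the free action of the deck group on $\phi^{-1}(p_Y)$ and connectedness of $L_X$ via Proposition~\ref{connected pre-image}), and the $r$-th order tangency lets this lift extend smoothly through $0$ via the local $r$-th root formula, with transverse intersection with $D_X$.

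For regularity on the $X$-side I would note that $\phi$ is a local biholomorphism away from $D_X$, so $D_u$ and $D_v$ are conjugate under $d\phi$ there, and at the branch point the $r$-jet condition on $v$ pulls back under the local model to the transverse-intersection (i.e., $1$-jet) condition on $u$, so regularity of $\mathscr{T}^{J_Y,\chi}_r(\alpha)$ transfers directly to that of $\mathscr{M}^{J_X}(L_X,\beta)$. The main obstacle I anticipate is the orientation comparison: one must verify that this bijection is orientation-preserving, using that $d\phi$ is complex-linear away from $D_X$, that the spin structure on $L_X$ is pulled back from that of $L_Y$, and that the Fredholm-theoretic orientation of $\mathscr{T}^{J_Y,\chi}_r(\alpha)$ (built from the spin trivialization and the complex jet maps via the inductive construction in the orientation discussion of \S 1.5) matches the standard orientation of the $X$-side moduli under the pushforward.
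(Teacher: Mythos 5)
Your overall strategy — a correspondence via the pushforward $\phi_*$, regularity transfer, and an orientation check — matches the paper's, and the bijection you describe is the paper's $r$-fold covering $\mathscr{T}_1(X,\dots,\beta)\to \mathscr{T}_r(Y,\dots,\alpha)$ after fixing the lift of $p_Y$ by the constraint $u(\chi)=p_X$. However, there is a genuine gap concerning the perturbation data.

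You choose a \emph{domain-dependent} perturbation $(J_{Y,z})$ so that $\mathscr{T}^{\chi}_r(Y,L_Y,J_Y,J_{Y,z},\alpha)$ is regular (as you must: the transversality lemma for tangency moduli spaces uses domain-dependent perturbations to avoid restricting to somewhere-injective discs), and you pull back to $(J_{X,z})=\phi^*(J_{Y,z})$. But you then gauge-fix $u\in\mathscr{M}^{J_X}(L_X,\beta)$ using the $\mathrm{Aut}(\mathbb{D})$ action and claim the resulting count is $m_{0,\beta}(L_X)$. These two things are incompatible as written. If you use the domain-dependent $(J_{X,z})$, then $\mathrm{Aut}(\mathbb{D})$ does not act on the moduli space and the gauge-fixing argument does not apply. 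If you instead use the constant $J_X$, then the pushforward $v=\phi\circ u$ is $J_Y$-holomorphic for the \emph{constant} $J_Y$, not for the domain-dependent $(J_{Y,z})$, so $v$ need not lie in the moduli space $\mathscr{T}^{\chi}_r(Y,L_Y,J_Y,J_{Y,z},\alpha)$ you made regular; and there is no a priori reason the tangency moduli space for constant $J_Y$ is regular, since the pushforward discs $\phi\circ u$ have an $r$-to-$1$ branch point at $z=0$ and somewhere-injectivity is not automatic. The paper's proof resolves exactly this mismatch with an additional step you omit: after obtaining $\tau_\alpha(L_Y)=\deg\bigl(\mathscr{T}_1(X,L_X,J_X,J_{X,z},\beta)\xrightarrow{ev_\chi} L_X\bigr)$ via the $r$-fold covering, it chooses a separate generic constant $J$ on $X$ and runs a \emph{homotopy of perturbation data} from $(J_X,J_{X,z})$ to $(J,J)$, invoking the pseudo-cycle formalism to see the degree is unchanged; only then can one apply the tautological diffeomorphism $\iota_\chi:\mathscr{T}_1(X,L_X,J,J,\beta)\to\mathscr{M}^J_{0,1}(L_X,\beta)$ (which is where $\beta\cdot D_X=1$ and the $\mathrm{Aut}(\mathbb{D})$-gauge enter) to identify the result with $m_{0,\beta}(L_X)$. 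Without this interpolation step, your argument does not close.

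Two smaller remarks. First, the paper in fact treats the pushforward as an $r$-fold covering of moduli spaces (with a commutative square over $\phi:L_X\to L_Y$) rather than as a bijection after choosing a lift; these are equivalent, but the covering-map formulation makes the equality of degrees immediate. Second, on orientations: your discussion is in the right spirit, but the paper outsources most of the orientation bookkeeping to the homotopy-invariance of the pseudo-cycle degree and the diffeomorphism $\iota_\chi$, which is cleaner than trying to directly compare Fredholm orientations across the branched cover.
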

\begin{proof} 
Let $(J_Y,J_{Y,z})$ be a small perturbation of $I_Y$ such that the moduli space $\mathscr{T}_r(Y,L_Y,J_Y,J_{Y,z},\alpha)$ (see Remark \ref{remark about notation} for notation) is Fredholm regular and let $(J_X,J_{X,z}) = (\phi^{-1}J_Y,\phi^{-1}J_{Y,z})$.  Then, the moduli space $\mathscr{T}_0(X,L_X,J_X,J_{X,z},\beta)$ is Fredholm regular (\cite[Proposition 3.14]{mypaper1}) and the map
\begin{equation*}
   \mathscr{T}_1(X,L_X,J_Y,J_{Y,z},\beta)\rightarrow  \mathscr{T}_r(Y,L_Y,J_X,J_{X,z},\alpha):\quad u\mapsto \phi\circ u
\end{equation*}
is an $r$-fold covering map (\cite[Lemma 3.11]{mypaper1}). It follows that
\begin{equation}\label{won't use it again}
    \tau_{\alpha}(L_Y) = \deg(\mathscr{T}_1(X,L_X,J_X,J_{X,z},\beta)\xrightarrow[]{ev_\chi} L_X) .
\end{equation}
Let $J$ be a small perturbation of $I_X$ such that discs of Maslov index $2$ are Fredholm regular. A generic path from $J_{X,z}$ to $J$ provides an equivalence between the pseudo-cycle in (\ref{won't use it again}) and the pseudo-cycle
\begin{equation*}
    \mathscr{T}_1(X,L_X,J,J,\beta)\xrightarrow[]{ev_\chi} L_X.
\end{equation*}
Since $\beta\cdot D_X=1$, the tautological map
\begin{equation*}
  \iota_{\chi}:  \mathscr{T}_1(X,L_X,J,J,\beta)\rightarrow \mathscr{M}^{J}_{0,1}(L,\beta)
\end{equation*}
is a diffeomorphism. The lemma hence follows because $ev_{\chi}=ev\circ\iota_{\chi}$.
\end{proof}
Motivated by the dichotomy of Lemmas \ref{count up is count down 1} and \ref{count up is count down 2}, we define
\begin{equation*}
    W^{Z\backslash D}_L = \sum_{\mu(\beta)=2,\beta\cdot D=0}m_{0,\beta}(L)q^{\partial \beta}\quad \text{and}\quad W^D_L = W_L- W^{Z\backslash D}_L.
\end{equation*}
Note that the map $\beta\mapsto\alpha$ extends to a homomorphism of algebras
\begin{equation*}
    \phi_* :\mathbb{C}[H_1(L_X,\mathbb{Z})]\rightarrow \mathbb{C}[H_1(L_Y,\mathbb{Z})].
\end{equation*}
This homomorphism describes an \emph{unbranched} cyclic covering of degree $r$,
\begin{equation*}
    \phi^{\vee}: \spec(\mathbb{C}[H_1(L_Y,\mathbb{Z})])\rightarrow \spec(\mathbb{C}[H_1(L_X,\mathbb{Z})]).
\end{equation*}
\begin{theorem}\label{super-potential-theorem}
Let $(Y,I_Y,\omega_Y)$ be a Fano K\"ahler triple, $\phi:X\rightarrow Y$ a cyclic covering map of degree $r$ branched along a smooth ample divisor $D_Y\subseteq Y$. Let $L_Y\subseteq Y$ be a Maslov-positive Lagrangian torus in the complement of $D_Y$ such that $W_{L_Y}^{D_Y}\neq 0$. Then, the pre-image $L_X=\phi^{-1}(L_Y)$ is a connected torus. Moreover, the cohomology class $[\phi^*\omega_Y]\in H^2(X,L_X)$ supports a K\"ahler form $\omega_X$ on $X$ such that $L_X\subseteq (X,\omega_X)$ is Lagrangian. Its associated super-potential is given by the formula
\begin{equation}\label{super-potential-formula}
    (\phi^{\vee})^*W_{L_X} = W^{Y\backslash D_Y}_{L_Y} + (W_{L_Y}^{D_Y})^r - r!\langle\psi_{r-2}\emph{\text{pt}} \rangle_r.
\end{equation}
\end{theorem}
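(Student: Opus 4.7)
The strategy is to reduce the formula to the machinery already built on the $Y$-side: split Maslov-index-2 discs on $(X,L_X)$ according to their intersection with the ramification divisor $D_X$, identify each piece with an appropriate count on $Y$ via the two comparison lemmas, and substitute the smooth-divisor counting formula \eqref{main-formula-smooth} to rewrite the tangency counts in terms of products of Maslov-2 disc counts.

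I would dispatch the preliminary claims first. Maslov positivity of $L_Y$ forces any disc contributing to $W_{L_Y}^{D_Y}$ to satisfy $v\cdot D_Y=1$, so the hypothesis $W_{L_Y}^{D_Y}\neq 0$ produces such a disc and triggers Proposition \ref{connected pre-image}, giving that $L_X$ is a connected (hence torus) cover of $L_Y$. Existence of $\omega_X$ in the prescribed cohomology class is immediate from Proposition \ref{Kahler form}, and Corollary \ref{maslov positivity preserved under cyclic coverings} transfers Maslov positivity to $L_X\subseteq X\backslash D_X$. The latter forces every Maslov-2 class $\beta\in H_2(X,L_X)$ supporting a disc to satisfy $\beta\cdot D_X\in\{0,1\}$, so $W_{L_X}$ splits naturally into two pieces $W_{L_X}^{X\backslash D_X}$ and $W_{L_X}^{D_X}$.

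The $\beta\cdot D_X=0$ piece is handled by Lemma \ref{count up is count down 1}: for $\alpha=\phi_*\beta$, Lemma \ref{Riemann Hurwitz} gives $\alpha\cdot D_Y=0$ and $\mu_{L_Y}(\alpha)=2$, and the lemma supplies $m_{0,\beta}(L_X)=m_{0,\alpha}(L_Y)$. The $r$-fold moduli cover used in its proof forces the induced map on supporting classes $\beta\mapsto \alpha$ to be bijective, so pushing forward boundaries via $(\phi^{\vee})^{*}$ reassembles exactly $W_{L_Y}^{Y\backslash D_Y}$. For the $\beta\cdot D_X=1$ piece, Lemma \ref{Riemann Hurwitz} forces $\alpha\cdot D_Y=r$ and $\mu_{L_Y}(\alpha)=2r$, and Lemma \ref{count up is count down 2} gives $m_{0,\beta}(L_X)=\tau_\alpha(L_Y)$. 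Substituting \eqref{main-formula-smooth},
\[
\tau_\alpha(L_Y)=(W_{L_Y}^{D_Y})^r[\alpha] - r!\langle\psi_{r-2}\text{pt}\rangle_\alpha,
\]
and summing over all such $\alpha$: the first term reassembles into the full product $(W_{L_Y}^{D_Y})^r$ (every factorization into Maslov-2, $D_Y$-degree-1 classes lies in the correct bidegree), while the second collapses to $-r!\langle\psi_{r-2}\text{pt}\rangle_r$ because $\langle\psi_{r-2}\text{pt}\rangle_\alpha$ vanishes unless $\alpha$ is spherical, in which case $q^{\partial\alpha}=1$.

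The main obstacle is the class-level bookkeeping: one must verify that for every relevant $\alpha\in H_2(Y,L_Y)$ with $\alpha\cdot D_Y=r$, $\mu_{L_Y}(\alpha)=2r$, and admitting a tangency disc, there is a unique lift $\beta\in H_2(X,L_X)$ with $\beta\cdot D_X=1$ mapping to $\alpha$, and symmetrically in the $\beta\cdot D_X=0$ case, so that no contributions are missed or double-counted under $\phi^{\vee}$. This compatibility is implicit in the $r$-fold covering statements in Lemmas \ref{count up is count down 1}--\ref{count up is count down 2} (two lifts differing by a nontrivial deck action would yield extra preimages of a single $v$, contradicting the degree), but a careful cohomological check using the transfer structure of $\phi$ and the boundary map $H_2(X,L_X)\to H_1(L_X)$ is needed to make this rigorous. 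One must also confirm that the pseudo-index hypothesis for invoking \eqref{main-formula-smooth} is met in the present setting, which follows from Maslov positivity of $L_Y$ together with smoothness of $D_Y$.
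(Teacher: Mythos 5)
Your proposal follows essentially the same route as the paper's (admittedly terse) proof: connectedness via Proposition \ref{connected pre-image}, the K\"ahler form via Proposition \ref{Kahler form}, the dichotomy $\beta\cdot D_X\in\{0,1\}$ from Maslov positivity, Lemmas \ref{count up is count down 1}--\ref{count up is count down 2} to transfer each piece to the $Y$-side, and the smooth-divisor counting formula \eqref{main-formula-smooth} to convert tangency numbers into products of disc counts plus the descendant. Your observation that $\langle\psi_{r-2}\text{pt}\rangle_\alpha$ vanishes unless $\alpha$ is spherical, so that the descendant term collapses to $-r!\langle\psi_{r-2}\text{pt}\rangle_r$, is exactly the step the paper leaves implicit, and your concern about the class-level bookkeeping under $\phi_*$ (injectivity of $\phi_*\colon H_1(L_X)\to H_1(L_Y)$, surjectivity onto the relevant classes) is legitimate but ultimately resolves as you suggest, since tangency discs downstairs lift to discs upstairs and $\phi_*\colon\mathbb{C}[H_1(L_X)]\to\mathbb{C}[H_1(L_Y)]$ is injective.

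The one point that is not correct as stated is your justification for the pseudo-index bound $r\leq j_Y$. You claim it "follows from Maslov positivity of $L_Y$ together with smoothness of $D_Y$," but Maslov positivity only constrains holomorphic \emph{discs} with boundary on $L_Y$, whereas $j_Y$ is a minimum of Chern numbers of rational \emph{curves}, which $L_Y$ does not see. The paper's actual argument is: for any curve $C$ realizing $j_Y$, one has $j_Y = K_Y^{-1}\cdot C \geq D_Y\cdot C \geq r$, where the first inequality uses nefness of $c_1(Y)-D_Y$ (the standing hypothesis of the smooth-divisor counting framework around \eqref{main-formula-smooth}) and the second uses that $D_Y$ is ample \emph{and} divisible by $r$ in $\text{Pic}(Y)$, since it is the zero locus of a section of $\mathscr{L}^{\otimes r}$ for $\mathscr{L}$ ample. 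This is a small gap but it replaces a geometric-positivity argument with an appeal to a hypothesis that genuinely does not imply the conclusion.
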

\begin{proof}
The pre-image $L_X$ is connected because of Proposition \ref{connected pre-image} and the assumption that $W_{L_Y}^{D_Y}\neq 0$. The K\"ahler form $\omega_X$ is described in Proposition \ref{Kahler form}. Finally, the super-potential formula follows from Lemmas \ref{count up is count down 1}-\ref{count up is count down 2} and Theorem \ref{main theorem}. Note that the requirement $r\leq j_Y$ is automatic. Indeed,  if $C$ is a curve with Chern number $j_Y$, then
\begin{equation*}
    j_Y = K_Y^{-1}\cdot C \geq D_Y\cdot C\geq r, 
\end{equation*}
because $D_Y$ is ample and divisible by $r$.
\end{proof}
\begin{remark}
It is useful to remember the interpretation of $r!\langle\psi_{r-2}\emph{\text{pt}}\rangle_r$ as the count of rational curves $u:\mathbb{P}^1\rightarrow Y$ of degree $r\leq j_Y$ such that $u(\infty)=p$ and $u(\zeta^i)\in D_Y$. In particular, this term does not contribute if $r<j_Y$.
\end{remark}

\begin{remark}\label{remark about abelian covers}
Analogues of Theorem \ref{super-potential-theorem} hold for more complicated Abelian covers which are branched along simple normal crossings divisors in $Y$, such as the $\mathbf{a}$-branched covers studied by N. Sheridan, see Definition 3.5.16 in \cite{sheridan-cy}. Such covers can be constructed by taking iterated cyclic coverings branched over components of an anti-canonical simple normal crossings divisor. See the example of the Del Pezzo surface $\text{Bl}_5(\mathbb{P}^2)$ below.
\end{remark}
\subsection{Weak LG-models}
In \cite{Fano-search}, the authors outline a program to search for new Fano 4-folds inspired by closed string mirror symmetry. The idea is to establish a database for known Fano $4$-folds, their quantum periods (this is done in \cite{database-for-fourfolds}), and their associated \emph{weak LG-models}; see \cite{database-for-threefolds} for what the end product looks like for Fano $3$-folds.

\begin{definition}
Let $X$ be a Fano manifold of dimension $n$ and let $f$ be a Laurent polynomial in $n$ variables. We say that $f$ is a weak LG-model for $X$ if 
\begin{equation*}
    c_0(f^k) = k!\langle \psi_{k-2}\emph{\text{pt}}\rangle_{k}\quad\text{for all}\ k\geq 1,
\end{equation*}
where $c_0(f^k)$ is the constant term of the Laurent polynomial $f^k$.
\end{definition}

Given such a database, one can compare any newly constructed Fano manifold to this database and decide if it is genuinely new. The idea being that if $X_1$ and $X_2$ are deformation equivalent Fano manifolds, then their quantum periods agree $\widehat{G}_{X_1}(t)=\widehat{G}_{X_2}(t)$. This principle was applied successfully in \cite{New-Fanos} to produce new Fano $4$-folds. It is worth noting that $\widehat{G}_X$ appears to be a strong invariant of $X$: in dimensions $n\leq 3$ where we already have a classification, it is a complete invariant of deformation classes of Fano manifolds.

The bridge between this program and our work is the quantum period theorem of Tonkonog.
\begin{theorem}\label{quantum period theorem}(see \cite{tonkonog-periods})
Let $Y$ be a Fano manifold and let $L\subseteq Y$ be a monotone Lagrangian torus. Then, the super-potential $W_L$ is a weak LG-model for $Y$.
\end{theorem}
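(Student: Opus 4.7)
\textit{Proof sketch for the quantum period theorem.} The plan is to derive the identity $c_0(W_L^k) = k!\langle \psi_{k-2}\emph{\text{pt}} \rangle_k$ directly from the main counting formula (\ref{main-formula}), applied term-by-term to each spherical disc class.

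First I would equip $Y$ with an anti-canonical simple normal crossings divisor $D \subseteq Y$ such that $L \subseteq Y\backslash D$ is admissible. Since $L$ is monotone and $c_1(Y) = [D]$, the monotonicity constant aligns $\omega$ and the Maslov class in the correct way; together with the fact that $L$ is a torus (so $\eta_L$ is controlled by an $H^1(L)$-valued class), one can arrange admissibility by taking $D$ in general position and invoking the Maslov identity (\ref{Maslov identity}). With this choice, for each spherical class $\alpha \in H_2(Y)$ with $c_1(\alpha) = k \leq j_Y$ and associated tangency vector $\mathbf{v} = (r_1,\dots,r_N)$, Theorem \ref{main theorem} gives
\begin{equation*}
    \frac{k!}{r_1!\cdots r_N!}\tau^D_\alpha(L) + k!\langle \psi_{k-2}\emph{\text{pt}}\rangle_\alpha = W_L^k[\alpha].
\end{equation*}
Summing over all spherical $\alpha$ with $c_1(\alpha) = k$, the right hand side collects exactly the coefficient of $q^0$ in $W_L^k$, i.e. the constant term $c_0(W_L^k)$, while the descendant term assembles into $k!\langle \psi_{k-2}\emph{\text{pt}}\rangle_k$. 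Thus the theorem reduces to showing that $\tau^D_\alpha(L) = 0$ for every spherical class $\alpha$.

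The vanishing of $\tau^D_\alpha(L)$ is the main obstacle and is exactly the issue addressed in the proof sketch of Corollary \ref{Audin's conjecture}. Since $L$ is a torus, it carries a flat metric, hence a metric of non-positive sectional curvature. I would run the Cieliebak--Mohnke neck-stretching construction of \cite{CM-neck-stretching} along $L$: if $\tau^D_\alpha(L) \neq 0$, then for every neck-stretched almost complex structure $J_k$ there exists a disc $u_k$ which is fully tangent to $D$ at $0 \in \mathbb{D}$ in the class $\alpha$. The SFT limit is a holomorphic building whose top level in $Y\backslash L$ contains the tangency (and is the only non-constant piece there by the pseudo-index bound $k \leq j_Y$), while the bottom level in $T^*L$ is a punctured half-cylinder with boundary on $L$, asymptotic at its puncture to a closed Reeb orbit of the unit cotangent bundle, i.e.\ a lift of a closed geodesic on $L$. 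The boundary loop of this half-cylinder is $\partial \alpha = 0$ in $\pi_1(L)$ (since $\alpha$ is spherical and $\pi_1(T^n)$ is abelian, hence torsion-free), so the asymptotic geodesic would be null-homotopic. In a flat (or merely non-positively curved) torus no non-constant closed geodesic is null-homotopic, a contradiction. Hence $\tau^D_\alpha(L) = 0$ for every spherical $\alpha$.

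The remaining point is to handle Chern numbers $k > j_Y$, where the main theorem in its stated form does not apply. For these I would argue that both sides of the desired identity are compatible with the recursion implied by multiplying $W_L^{k-1}$ by $W_L$, or alternatively invoke the degeneration/unfolding argument behind Theorem \ref{main theorem} applied to the more general situation where the tangency moduli space is compactified by iterated bubbling; the SFT vanishing argument above continues to rule out fully tangent discs with null-homotopic boundary, so the same cancellation occurs at each level. Granting this extension, summing (\ref{main-formula}) over all spherical classes of Chern number $k$ yields $c_0(W_L^k) = k!\langle \psi_{k-2}\emph{\text{pt}}\rangle_k$ for all $k \geq 1$, as desired. \qed
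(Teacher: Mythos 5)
The paper does not prove this theorem: it is quoted from Tonkonog's work \cite{tonkonog-periods} and used as a black box (for instance in Corollary \ref{LG-model for cyclic covers}). So what you have written is not a reconstruction of the paper's argument but an attempt to re-derive Tonkonog's theorem from Theorem \ref{main theorem}. The idea is natural, and it is closely related to the way the paper applies Theorem \ref{main theorem} in Corollary \ref{LG-model for cyclic covers}, but as a proof of Theorem \ref{quantum period theorem} in the generality stated it has two genuine gaps.

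First, and most seriously, the opening step is not available. Theorem \ref{quantum period theorem} is asserted for an arbitrary monotone Lagrangian torus $L$ in a Fano $Y$, with no hypothesis on the ambient complex geometry near $L$. Your argument requires an anti-canonical simple normal crossings divisor $D\subseteq Y$ with $L\cap D=\emptyset$ and $L$ \emph{admissible} in $Y\setminus D$ in the sense of \eqref{admissibility-assumption}, i.e.\ $\langle\omega,\pi_2(Y\setminus D,L)\rangle=0$ and $\eta_L=0$. Monotonicity of $L$ does not produce such a $D$. In fact this is precisely why the paper imposes admissibility (or, later, an exact graded hypothesis) in Corollary \ref{Audin's conjecture} and Corollary \ref{LG-model for cyclic covers} rather than invoking Tonkonog's theorem unconditionally, and why it states the conjecture following Corollary \ref{LG-model for cyclic covers} as open. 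Tonkonog's own proof avoids this obstruction by working with a \emph{local} hypersurface germ near a point of $L$ and a tangency condition at a boundary marked point, plus an SFT breaking argument; it never needs a globally defined anti-canonical divisor disjoint from $L$ (the paper itself flags in a remark after Lemma \ref{gravitational-descendants} that Tonkonog's interpretation of $\langle\psi_{r-2}\text{pt}\rangle_\alpha$ looks different from the one used here).

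Second, the restriction $r\le j_Z$ in Theorem \ref{main theorem} is essential, not cosmetic: it is what kills multiple nonconstant spherical components in the Gromov limits (Lemmas \ref{proper map}--\ref{spherical bubble}) and is exactly what makes $\overline{\mathscr{M}}_{0,1}(Y,p,\alpha)$ unobstructed in Lemma \ref{gravitational-descendants}. The quantum period theorem needs $c_0(W_L^k)=k!\langle\psi_{k-2}\text{pt}\rangle_k$ for \emph{all} $k\ge 1$, but your argument via \eqref{main-formula} can only address $k\le j_Y$. The suggested fix (``the recursion implied by multiplying $W_L^{k-1}$ by $W_L$,'' or an appeal to iterated bubbling) is not an argument: there is no such closed recursion for constant terms of powers, and extending Theorem \ref{main theorem} past $j_Y$ would require redoing its compactness and gluing analysis in the presence of multi-level breaking, which is precisely the technical work the paper avoids. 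Even the vanishing step $\tau^D_\alpha(L)=0$ (which is fine for $k\le j_Y$, modulo the existence of $D$, since a torus admits a flat metric and the neck-stretching argument from the sketch of Corollary \ref{Audin's conjecture} applies verbatim) uses the pseudo-index bound to force the top level of the SFT building to have a single nonconstant component, and so it too does not automatically extend beyond $k=j_Y$.

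So: the route you propose is genuinely different from the paper's (the paper simply cites Tonkonog), and it would be a nice alternative derivation in the special case where $L$ happens to sit admissibly in the complement of an anti-canonical SNC divisor and one only cares about $k\le j_Y$. As a proof of the theorem as stated, it does not close.
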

The Fano manifold $Y$ above is equipped with an anti-canonical K\"ahler form $\omega_Y$. Recall that any such K\"ahler form arises as the curvature of some (positive) metric $h$ on the line bundle $K_Y^{-1}$,
\begin{equation*}
    \omega_Y = -\frac{1}{4\pi}dd^c\log\abs{s}_h,
\end{equation*}
where $s$ is a holomorphic section of $K_Y^{-1}$. In particular, in the complement of the anti-canonical divisor $D=s^{-1}(0)$, the K\"ahler form $\omega_Y$ has a \emph{preferred} primitive
\begin{equation*}
\theta = -\frac{1}{4\pi}d^c\log\abs{s}_h.  
\end{equation*}
Unless otherwise specified, this is always the exact structure given to $Y\backslash D$.
\begin{corollary}\label{LG-model for cyclic covers}
Let $Y$ be a Fano manifold of index $r$, $D\subseteq Y$ a smooth anti-canonical divisor such that $Y\backslash D$ contains a graded exact Lagrangian torus. Let $X$ be the $r$-fold cyclic covering of $Y$ which is branched along $D$. If $f_Y$ is a weak LG-model for $Y$, then $f_Y^r-c_0(f^r_Y)$ is a weak LG-model for $X$.
\end{corollary}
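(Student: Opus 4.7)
The plan is to combine the super-potential formula (Theorem \ref{super-potential-theorem}) with Tonkonog's quantum period theorem (Theorem \ref{quantum period theorem}), applied on both sides of the cover $\phi : X \to Y$, and then translate the resulting identity into a statement about $f_Y$.

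First I verify that the graded exact torus $L_Y \subseteq Y \backslash D$ satisfies the hypotheses of Theorem \ref{super-potential-theorem}. Since $L_Y$ is graded, $\eta_{L_Y} = 0$, and the Maslov identity \eqref{Maslov identity} gives $\mu_{L_Y}(v) = 2\, v \cdot D$ for every disc $v:\disk\to (Y,L_Y)$. Exactness, together with the fact that $\omega_Y$ admits the preferred primitive $\theta$ on $Y \backslash D$, implies via Stokes' theorem that the symplectic area of $v$ is a positive multiple of $v \cdot D$; hence $v \cdot D \geq 1$ for every non-constant $I_Y$-holomorphic disc. This forces $\tfrac{1}{2}\mu_{L_Y}(v) = v \cdot D \geq \max\{v \cdot D, 1\}$, so $L_Y$ is Maslov positive in the sense of Definition \ref{Maslov-positive-definition}, monotone in $(Y,\omega_Y)$, and satisfies $W_{L_Y} = W_{L_Y}^{D_Y}$. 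The case $W_{L_Y} \equiv 0$ makes the conclusion essentially vacuous, so I may assume $W_{L_Y}^{D_Y} \neq 0$.

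By Tonkonog's theorem, $W_{L_Y}$ is a weak LG-model for $Y$, so $c_0(W_{L_Y}^k) = c_0(f_Y^k)$ for every $k \geq 1$. In particular $r!\langle \psi_{r-2}\text{pt}\rangle_r^Y = c_0(W_{L_Y}^r)$, and Theorem \ref{super-potential-theorem} simplifies to
\begin{equation*}
(\phi^{\vee})^* W_{L_X} \,=\, 0 + (W_{L_Y}^{D_Y})^r - r!\langle \psi_{r-2}\text{pt}\rangle_r^Y \,=\, W_{L_Y}^r - c_0(W_{L_Y}^r).
\end{equation*}
By Corollary \ref{monotone case}, $L_X \subseteq (X, \omega_X)$ is monotone, so Tonkonog's theorem applied on $X$ gives $c_0(W_{L_X}^k) = k!\langle \psi_{k-2}\text{pt}\rangle_k^X$ for every $k\geq 1$. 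The algebra map $\phi_{*} : \mathbb{C}[H_1(L_X)] \to \mathbb{C}[H_1(L_Y)]$ comes from an injection of free abelian groups of the same rank (since $L_X \to L_Y$ is a covering of tori), so it is an injective monomial substitution and preserves the constant term of any Laurent polynomial and of its powers. Therefore
\begin{equation*}
c_0\bigl((W_{L_Y}^r - c_0(W_{L_Y}^r))^k\bigr) \,=\, c_0\bigl(((\phi^{\vee})^* W_{L_X})^k\bigr) \,=\, c_0(W_{L_X}^k) \,=\, k!\langle \psi_{k-2}\text{pt}\rangle_k^X.
\end{equation*}

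To finish, I would expand $(f_Y^r - c_0(f_Y^r))^k$ via the binomial theorem and observe that its constant term is a universal polynomial in $\{c_0(f_Y^{rj})\}_{j=0}^{k}$. The same expression holds with $W_{L_Y}$ in place of $f_Y$, and the two are equal term-by-term because $c_0(f_Y^{rj}) = c_0(W_{L_Y}^{rj})$ for every $j \geq 0$ (trivially for $j=0$, by the weak LG-model property for $j \geq 1$). Combining with the previous display yields $c_0((f_Y^r - c_0(f_Y^r))^k) = k!\langle \psi_{k-2}\text{pt}\rangle_k^X$ for all $k \geq 1$, proving that $f_Y^r - c_0(f_Y^r)$ is a weak LG-model for $X$. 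The main obstacle is the opening step: translating graded exactness of $L_Y \subseteq Y \backslash D$ into the Maslov-positivity and monotonicity needed to invoke both Theorem \ref{super-potential-theorem} and Tonkonog's theorem on $Y$; once these are in hand, the rest is a formal bookkeeping argument on constant terms of powers.
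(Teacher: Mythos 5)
Your proof is correct and follows essentially the same route as the paper: verify that the graded exact torus is Maslov-positive (hence Theorem \ref{super-potential-theorem} applies) and monotone (hence Theorem \ref{quantum period theorem} applies), then translate the super-potential formula into an identity on constant terms. The paper's proof is considerably more compressed — it simply cites the integral formula $\int_{\mathbb{D}}u^*\omega - \int_{\partial\mathbb{D}}u^*\theta = u\cdot D$ to justify monotonicity and leaves the rest implicit — whereas you spell out the algebraic bookkeeping (that $\phi_*$ is an injective monomial substitution preserving constant terms, and the binomial expansion showing $c_0((f_Y^r - c_0(f_Y^r))^k)$ depends only on $\{c_0(f_Y^{rj})\}$), and you flag the degenerate case $W_{L_Y}\equiv 0$, which the paper passes over silently.
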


\begin{proof}
Just observe that the exact graded Lagrangian torus $L\subseteq Y\backslash D$ is both admissible so that Theorem \ref{super-potential-theorem} applies and monotone so that Theorem \ref{quantum period theorem} applies. To see that $L$ is monotone, one needs the integral formula
\begin{equation*}
   \int_{\mathbb{D}}u^*\omega - \int_{\partial \mathbb{D}} u^*\theta = u\cdot D,
\end{equation*}
which holds for any disc map $u:\mathbb{D}\rightarrow Y$ such that $u(\partial \mathbb{D})\subseteq Y\backslash D$.
\end{proof}

We note that $f_X^{\text{nv}}\defeq f_Y^r-c_0(f^r_Y)$ is really not the correct mirror to $X$. Suppose that $f_Y:(\mathbb{C^*})_Y^n\rightarrow \mathbb{C}$ is a genuine super-potential (i.e. associated with a graded exact Lagrangian torus $L_Y$), where $(\mathbb{C^*})_Y^n = \spec(\mathbb{C}[H_1(L_Y,\mathbb{Z})])$ is just a copy of $(\mathbb{C^*})^n$. Then, Theorem \ref{super-potential-theorem} suggests that the super-potential $f_X$ associated with $X$ acts on a \emph{quotient} of $(\mathbb{C^*})_Y^n$. This means that there is a free action of $\mathbb{Z}_r$ on $(\mathbb{C}^*)_Y^n$ which fixes $f_X^{\text{nv}}$. The correct mirror is the quotient 
\begin{equation}\label{corrected mirror}
 f_X :(\mathbb{C^*})_Y^n/\mathbb{Z}_r\xrightarrow[]{[f_X^{\text{nv}}]} \mathbb{C}.   
\end{equation}

\begin{remark} \label{correct mirror is quotient}
Corollary \ref{LG-model for cyclic covers} is purely an algebro-geometric result, but our methods require the existence of a graded exact Lagrangian torus. Note further that it applies the same to $m$-fold cyclic coverings of $Y$ branched along $D$, as long as $m$ divides the index $r$.
\end{remark}
\begin{conjecture}
Corollary \ref{LG-model for cyclic covers} holds without the assumption that $Y\backslash D$ contains a graded exact Lagrangian torus.
\end{conjecture}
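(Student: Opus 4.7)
\textit{Proof proposal.} The conjecture is a purely algebro-geometric statement, so the plan is to bypass the Lagrangian entirely and establish the required identity of quantum periods directly via Gromov--Witten theory. Writing $a \defeq c_0(f_Y^r) = r!\langle \psi_{r-2}\text{pt}\rangle^Y_r$ and expanding $(f_Y^r - a)^k$ by the binomial theorem reduces the conjecture to the purely enumerative identity
\begin{equation*}
   k!\langle\psi_{k-2}\text{pt}\rangle^X_k \;=\; \sum_{j=0}^{k}\binom{k}{j}(-a)^{k-j}(rj)!\langle\psi_{rj-2}\text{pt}\rangle^Y_{rj},
\end{equation*}
holding for every $k\geq 1$, since $f_Y$ itself is assumed to be a weak LG-model for $Y$. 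It thus suffices to prove this identity between descendant invariants of $X$ and $Y$.

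The main step is an algebraic version of the quantum Riemann--Hurwitz principle underlying Theorem \ref{super-potential-theorem}. A genus-$0$ curve $u:\mathbb{P}^1\to X$ of anti-canonical degree $k$ passing through a point pushes forward under $\phi$ to $v=\phi\circ u:\mathbb{P}^1\to Y$ of anti-canonical degree $rk$, whose intersection with $D_Y$ is organized as a divisor of the form $r(x_1+\dots+x_s)$ where $s = u\cdot D_X$. The plan is to replace the analytic deformation argument of Figure \ref{intro-figure} by a degeneration of $Y$ to the normal cone of $D_Y$ and invoke Gathmann-type (or logarithmic GW) relations to express each $\langle\psi_{k-2}\text{pt}\rangle^X_k$ as a sum of tangency invariants of the pair $(Y,D_Y)$. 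The key observation, already implicit in Lemma \ref{gravitational-descendants}, is that prescribing an $r$-fold tangency at one point corresponds, on the mirror side, to raising $f_Y$ to the $r$th power locally at that point, which matches precisely the shape of the binomial expansion above; the subtracted term $r!\langle\psi_{r-2}\text{pt}\rangle^Y_r$ in the statement of the conjecture then accounts exactly for the virtual contribution of rational bubbles carrying all $r$ tangent points at a single node (the algebraic avatar of Lemma \ref{spherical bubble}).

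The main obstacle is rigorously controlling the virtual fundamental class in the degeneration without the compactness bound enforced by the Lagrangian $L_Y$. In the proof of Theorem \ref{main theorem}, admissibility (or Maslov positivity) excluded disc bubbles and divisor sinking; in the algebraic category, the analogue is the exclusion of stable log maps whose contracted components lie deep inside $D_Y$ or in its normal cone, and showing that the surviving strata contribute with the expected multiplicities. A natural route is a two-step specialization: first deform $Y$ to a toric Fano $Y^0$ with $D_Y$ specializing to a component of the toric boundary, so that $X$ specializes to a (partial) toric variety whose quantum period is combinatorially accessible, and then invoke deformation invariance of GW invariants to transport the identity back to $(Y,D_Y)$. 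The difficulty is that not every Fano $(Y,D_Y)$ admits such a toric degeneration compatible with the cyclic cover, so one would likely need to combine this approach with the logarithmic GW comparison theorems of Abramovich--Chen--Gross--Siebert to handle the general case.
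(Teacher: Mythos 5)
This statement is an open \emph{conjecture} in the paper; the author gives no proof and explicitly flags the difficulty (see the ``Related works'' discussion: ``Making this connection rigorous, however, seems to require major technical work that goes beyond the scope of this article''). So there is no proof in the paper to compare yours against, and your submission is not a proof either: it is a plan, and you candidly identify the step you cannot carry out.

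Your first move is correct and worth keeping: writing $a = c_0(f_Y^r)$, expanding $(f_Y^r - a)^k$ binomially, and using the weak LG-model property of $f_Y$ does reduce the conjecture to the purely enumerative identity
\begin{equation*}
   k!\,\langle\psi_{k-2}\text{pt}\rangle^X_k \;=\; \sum_{j=0}^{k}\binom{k}{j}(-a)^{k-j}(rj)!\,\langle\psi_{rj-2}\text{pt}\rangle^Y_{rj},
\end{equation*}
with the usual conventions at small indices. This is a clean reformulation and makes precise what ``quantum Riemann--Hurwitz'' should mean. But everything after that is a sketch, not an argument. You propose degenerating $Y$ to the normal cone of $D_Y$ and invoking logarithmic/relative GW comparison theorems (Gathmann, Abramovich--Chen--Gross--Siebert), which is a reasonable framework, and you then immediately note that the central difficulty --- controlling contributions of components sinking into $D_Y$ or into the exceptional fiber, which in the paper's analytic setup was excluded by admissibility/Maslov positivity of the Lagrangian --- remains unresolved. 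That is precisely the content of the conjecture, and it is not addressed. The proposed detour through a toric degeneration of $Y$ only works when such a degeneration compatible with the cyclic covering exists, which you acknowledge is not general. In short: the reduction is fine, the strategy is plausible, but the proof is missing exactly where the paper says the difficulty lies, so this should be graded as an outline, not a proof.
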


\subsection{Fano Hypersurfaces in $\mathbb{P}^{n+1}$}
We now present an application of the super-potential formula (\ref{super-potential-formula}) to the Fukaya category of a degree $d< n+1$ Fano hypersurface $X_{d}\subseteq \mathbb{P}^{n+1}$. The index $1$ Fano case, which corresponds to $d=n+1$, was the subject of \cite{mypaper1}. The methods we use here apply the same way to the index $1$ Fano case as well, but we exclude it for clarity of exposition. We use the notation $[t:x_0:\dotsi:x_n]$ for homogeneous coordinates in the projective space $\mathbb{P}^{n+1}$. The Fano hypersurface $X_d$ can be cut-out by an equation of the type
\begin{equation*}
    X_d = V(t^d-x_0\dots x_{d-1} +\epsilon f(x_0,\dots,x_n)),\quad \abs{\epsilon}\hbox{ small, } f \hbox{ generic.}
\end{equation*}
It comes with a $d$-to-$1$ cyclic covering map
\begin{equation*}
    \phi: X_d\rightarrow \mathbb{P}^n,\quad\quad [t:x_0:\dotsi:x_n]\mapsto [x_0:\dotsi:x_n].
\end{equation*}
It is branched along the smooth hypersurface
\begin{equation*}
H_{\epsilon}=V(x_0\dots x_{d-1} -\epsilon f(x_0,\dots,x_n))\subseteq \mathbb{P}^n  .  
\end{equation*}

Let $\omega_{\FS}$ be the Fubini-Study K\"ahler form on $\mathbb{P}^n$, scaled so that
\begin{equation*}
    c_1(\mathbb{P}^n) = (n+1)[\omega_{\FS}].
\end{equation*}
The symplectic manifold $(\mathbb{P}^n,\omega_{\FS})$ admits a Hamiltonian action by $(S^1)^n$ that is free in the complement of a union of $n+1$ hyperplanes,
\begin{equation*}
   P = \bigcup_{i=0}^n \{ x_i=0\}.
\end{equation*}
The moment map of this Hamiltonian action is
\begin{align}\label{moment map}
    M:\mathbb{P}^n &\rightarrow \Delta\\
    [x_0:\dotsi:x_n] &\mapsto \left(\frac{\abs{x_0}^2}{\abs{x}^2},\dots, \frac{\abs{x_n}^2}{\abs{x}^2}\right),\notag
\end{align}
where $\abs{x}^2=\abs{x_0}^2+\dotsi+\abs{x_n}^2$ and $\Delta\subseteq \mathbb{R}^{n+1}$ is the $n$-dimensional simplex,
\begin{equation*}
  \Delta = \left\{ (r_0,\dots,r_n)\in (\mathbb{R}_{\geq 0})^{n+1} \ | \ \sum_{i=0}^n r_i = 1 \right\}. 
\end{equation*}

The fibers of the moment map over the interior of the simplex are Lagrangian tori parametrized by $\mathbf{r}\in \text{int}(\Delta)$:
\begin{equation*}
    L_{\mathbf{r}}=\{[x_0:\dots:x_n] \ | \ r_0^{-1}\abs{x_0}=\dots=r_n^{-1}\abs{x_n} \}.
\end{equation*}
There is a convenient generating set of the homology group $H_2(\mathbb{P}^n,L_{\mathbf{r}},\mathbb{Z})$. It is given by holomorphic discs $v_0,\dots,v_n$, where
\begin{equation}\label{generators of relative H2}
    v_k(\mathbf{r}) : \disk \rightarrow (\mathbb{P}^n,L_{\mathbf{r}}),\quad
    z \mapsto [r_0:\dots:r_{k}z:\dots:r_{n}].
\end{equation}
These classes add up to the spherical class that generates $H_2(\mathbb{P}^n,\mathbb{Z})$. Moreover, for each $k=0,\dots,n$,
\begin{equation*}
    \mu_{L_\mathbf{r}}(v_k)=2\quad\text{and}\quad\text{Area}_{\omega_{\FS}}(v_k)=r_k.
\end{equation*}

The torus $L_\cl$ corresponding to $\mathbf{r}=(1/(n+1),\dots,1/(n+1))$ is usually called the Clifford torus. It is the only one among these tori that is monotone with respect to the Fubini-Study K\"ahler form. Consider instead the Lagrangian torus $L_\mathbf{r}\subseteq\mathbb{P}^n$ corresponding to
\begin{equation*}
    r_0=\dots=r_{d-1}=\frac{1}{d(n+2-d)}\quad\text{and}\quad r_d=\dots =r_n=\frac{1}{n+2-d}.
\end{equation*}
When $\epsilon$ is sufficiently small, $L_\mathbf{r}$ is disjoint from $H_{\epsilon}$ and therefore its pre-image $L\subseteq X_d$ is a connected Lagrangian torus, see Proposition \ref{connected pre-image}. Let $\omega$ be the K\"ahler form on $X_d$ that is constructed in Proposition \ref{Kahler form}.
\begin{lemma}\label{lift is monotone}
The Lagrangian torus $L\subseteq (X_d,\omega)$ is monotone.
\end{lemma}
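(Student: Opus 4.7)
The plan is to show that both the symplectic area $u\mapsto \int_{\mathbb{D}}u^*\omega$ and the Maslov index $u\mapsto\mu_L(u)$ are linear functionals on $H_2(X_d,L)$ that factor through $\phi_*:H_2(X_d,L)\to H_2(\mathbb{P}^n,L_\mathbf{r})$, which reduces monotonicity to a finite calculation in the toric model. For the area, since $\omega-\phi^*\omega_{\FS}=dd^c\rho$ where $\rho$ is supported in a neighborhood $U$ disjoint from $L=L_X$, the primitive $d^c\rho$ vanishes along $\partial\mathbb{D}\subseteq L$, and Stokes' theorem gives
\begin{equation*}
\int_{\mathbb{D}}u^*\omega=\int_{\mathbb{D}}(\phi\circ u)^*\omega_{\FS}.
\end{equation*}
For the Maslov index, Lemma \ref{Riemann Hurwitz} gives
\begin{equation*}
\tfrac{1}{2}\mu_L(u)=\tfrac{1}{2}\mu_{L_\mathbf{r}}(\phi\circ u)-\tfrac{d-1}{d}(\phi\circ u)\cdot H_\epsilon,
\end{equation*}
whose right-hand side depends only on $v\defeq\phi\circ u\in H_2(\mathbb{P}^n,L_\mathbf{r})$.

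Next, I expand in the basis $\{v_0,\dots,v_n\}$ of $H_2(\mathbb{P}^n,L_\mathbf{r})$ from \eqref{generators of relative H2}. Writing $v=\sum_{k=0}^n a_k v_k$, one has $a_k=v\cdot\{x_k=0\}$, $\mu_{L_\mathbf{r}}(v)=2\sum_k a_k$, and $\int_{\mathbb{D}}v^*\omega_{\FS}=\sum_k a_k r_k$. I then need the intersection number $v_k\cdot H_\epsilon$. Because $L_\mathbf{r}$ is disjoint from the entire family $H_t=V(x_0\dotsi x_{d-1}-tf)$ for small $t$ (all radii $r_i$ are strictly positive), the class $[H_\epsilon]\in H^2(\mathbb{P}^n,L_\mathbf{r})$ equals $[H_0]=[D_0]+\dotsi+[D_{d-1}]$. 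Therefore $v_k\cdot H_\epsilon=1$ for $k<d$ and $v_k\cdot H_\epsilon=0$ for $k\geq d$. Alternatively, this can be seen directly by substituting $v_k(z)=[r_0:\dotsi:r_kz:\dotsi:r_n]$ into the defining equation of $H_\epsilon$ and applying the implicit function theorem near $z=0$.

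Finally I assemble the computation. Substituting $r_k=1/(d(n+2-d))$ for $k<d$ and $r_k=1/(n+2-d)$ for $k\geq d$, I obtain
\begin{equation*}
\int_{\mathbb{D}}u^*\omega=\frac{1}{n+2-d}\left(\frac{1}{d}\sum_{k<d}a_k+\sum_{k\geq d}a_k\right),
\end{equation*}
and on the Maslov side
\begin{equation*}
\mu_L(u)=2\sum_k a_k-\frac{2(d-1)}{d}\sum_{k<d}a_k=\frac{2}{d}\sum_{k<d}a_k+2\sum_{k\geq d}a_k.
\end{equation*}
Comparing gives $\int_{\mathbb{D}}u^*\omega=\frac{1}{2(n+2-d)}\mu_L(u)$ for every disc class, proving monotonicity with constant $\lambda=\frac{1}{2(n+2-d)}$.

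I expect no serious obstacle: the only point that needs a careful check is the identity $[H_\epsilon]=\sum_{k<d}[D_k]$ in \emph{relative} cohomology, which is why I insist on the homotopy through divisors disjoint from $L_\mathbf{r}$ (the choice of radii ensures this). Everything else is the book-keeping above.
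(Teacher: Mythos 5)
Your proof is correct and is essentially the paper's argument: both use the same basis $\{v_k\}$ of $H_2(\mathbb{P}^n,L_{\mathbf{r}})$, the same intersection numbers $v_k\cdot H_\epsilon$, the Riemann--Hurwitz identity of Lemma \ref{Riemann Hurwitz}, and the fact that $\omega-\phi^*\omega_{\FS}$ is exact away from $L$. The paper just packages the right-hand side of Riemann--Hurwitz as a linear functional $\ell$ and checks $\text{Area}_{\omega_\FS}=\ell/(n+2-d)$ on the basis, which is the same book-keeping you carry out explicitly; if anything, you spell out the justification of $v_k\cdot H_\epsilon$ and of the area pullback more carefully than the paper does.
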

\begin{proof}
Consider the following linear map
\begin{equation*}
   \ell: H_2(\mathbb{P}^n,L_{\mathbf{r}},\mathbb{Z})\rightarrow \mathbb{Q}, \quad v\mapsto \frac{1}{2}\mu_{L_{\mathbf{r}}}(v)-\frac{d-1}{d}v.H_{\epsilon}.
\end{equation*}
Since $H_{\epsilon}$ is a small deformation of $\{x_0\dotsi x_{d-1}=0\}$, we have that $\ell(v_k)=1/d$ in the range $0\leq k\leq d-1$ and that $\ell(v_k)=1$ in the range $d\leq k\leq n$. It follows that for all $v\in H_2(\mathbb{P}^n,L_{\mathbf{r}})$,
\begin{equation*}
    \text{Area}_{\omega_{\FS}}(v) = \frac{\ell(v)}{n+2-d}.
\end{equation*}
Let $u\in H_2(X_d,L,\mathbb{Z})$ be a disc class and let $v=\phi_*(u)$ be its pushforward. By the Riemann-Hurwitz formula (\ref{Riemann Hurwitz}),
\begin{align*}
    \frac{1}{2}\mu_L(u)=\ell(v)&=(n+2-d)\text{Area}_{\omega_\FS}(v)\\
    &=(n+2-d)\text{Area}_{\omega}(u).
\end{align*}
It follows that $L\subseteq (X_d,\omega)$ is  monotone.
\end{proof}
By Theorem \ref{super-potential-theorem}, the super-potential associated with the monotone Lagrangian torus $L\subseteq X_d$ is given by
\begin{equation}\label{super-potential for hypersurface}
   W_L = (z_0+\dotsi+z_{d-1})^d+z_d+\dotsi+z_n.
\end{equation}
It should be viewed as a function on the variety
\begin{equation}\label{mirror-space}
    \spec(\mathbb{C}[H_1(L,\mathbb{Z})]) = \{(z_0,\dots,z_n)\in\mathbb{C}^{n+1}\ |\ z_0\cdots z_n=1\}/\mathbb{Z}_d,
\end{equation}
where the action of $\mathbb{Z}_d$ is by $d$-roots of unity
\begin{equation*}
    \zeta\cdot(z_0,\dots,z_n) = (\zeta z_0,\dots,\zeta z_{d-1},z_d,\dotsi,z_n).
\end{equation*}
Since this action is fixed-point free, we can actually compute critical points and critical values of $W_L$ before passing to the quotient.
\begin{remark}
The super-potential $W_L$ from (\ref{super-potential for hypersurface}) agrees with  predictions from closed-string mirror symmetry going back to Batyrev, Hori-Vafa, and Givental. Indeed, the mirror space (\ref{mirror-space}) has global coordinates given by
\begin{align*}
   y_k&= z_k/ z_0 \ \quad\hbox{if } 1\leq k\leq d-1,\\
   x_{k-d}&= z_{k} \quad\ \ \hbox{if } d\leq k\leq n.
\end{align*}
In these coordinates (compare with \cite[\S 3.2]{przyj-formula-for-fano-hypersurface}),
\begin{equation*}
    W_L = x_0+\dotsi+x_{n-d}+\frac{(1+y_1+\dotsi+y_{d-1})^d}{x_0\dotsi x_{n-d}y_1\dotsi y_{d-1}},\quad\hbox{ for } d\leq n.
\end{equation*}
In the index $1$ Fano case, corresponding to $d=n+1$, this formula must be shifted by $-(n+1)!$, which is the regularized gravitational descendant appearing in the super-potential formula (\ref{super-potential-formula}).
\end{remark}
\begin{lemma}
The critical values of the super-potential are $(n+2-d)\lambda$, where $\lambda$ ranges over the solutions of the equation $\lambda^{n+2-d}=d^d$. Each of these critical values is non-degenerate.
\end{lemma}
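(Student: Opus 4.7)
The plan is to compute everything in the global Laurent coordinates $(y_1,\dots,y_{d-1},x_0,\dots,x_{n-d})$ introduced in the preceding remark; these descend to the quotient in \eqref{mirror-space} since $y_k=z_k/z_0$ and $x_{k-d}=z_k$ (for $k\geq d$) are $\mathbb{Z}_d$-invariant. In these coordinates
\begin{equation*}
W_L \;=\; x_0+\dots+x_{n-d} \;+\; \frac{(1+y_1+\dots+y_{d-1})^d}{x_0\cdots x_{n-d}\,y_1\cdots y_{d-1}}.
\end{equation*}
Writing $t=1+y_1+\dots+y_{d-1}$ and $F=t^d/(x_0\cdots x_{n-d}\,y_1\cdots y_{d-1})$, the logarithmic derivatives satisfy $\partial_{x_j}\log F=-1/x_j$ and $\partial_{y_k}\log F=d/t-1/y_k$. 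The critical equations $\partial_{x_j}W_L=1-F/x_j=0$ force all $x_j$ to share a common value $\lambda$ with $\lambda=F$, while $\partial_{y_k}W_L=F(d/t-1/y_k)=0$ yields $y_k=t/d$. Substituting into $t=1+\sum y_k$ gives $t=d$ and hence $y_k=1$ for all $k$. Then $F=d^d/\lambda^{n+1-d}$, so $\lambda=F$ becomes $\lambda^{n+2-d}=d^d$, and the critical value equals $(n+1-d)\lambda+F=(n+2-d)\lambda$. Each root $\lambda$ of $\lambda^{n+2-d}=d^d$ produces a unique critical point, so the $n+2-d$ critical values are pairwise distinct.

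For non-degeneracy I would compute the Hessian directly at the critical point $(y,x)=(1,\lambda)$. The mixed block vanishes because $\partial_{y_k}F$ carries the factor $d/t-1/y_k$, which is zero at the critical point. A direct computation gives the $x$-block $H_x=\frac{1}{\lambda}(I_{n+1-d}+J_{n+1-d})$ and the $y$-block $H_y=\lambda(I_{d-1}-\frac{1}{d}J_{d-1})$, where $J_m$ is the $m\times m$ all-ones matrix. Since $J_m$ has spectrum $\{m,0,\dots,0\}$, the full Hessian has eigenvalues $(n+2-d)/\lambda$, $1/\lambda$ (multiplicity $n-d$), $\lambda/d$, and $\lambda$ (multiplicity $d-2$). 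All are nonzero, so every critical point is Morse.

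The only real obstacle is careful bookkeeping: separating the two groups of coordinates, verifying that the critical-point equations decouple as claimed, and correctly reading off the spectra of the two rank-one perturbations of the identity in the Hessian blocks. No deep input is needed beyond the explicit formula for $W_L$ already recorded in the paper.
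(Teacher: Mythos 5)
Your proof is correct, and it takes a genuinely different (and arguably cleaner) route than the paper's. The paper works upstairs on the constrained variety $\{z_0\cdots z_n=1\}\subseteq(\mathbb{C}^*)^{n+1}$ via Lagrange multipliers (introducing an auxiliary $\lambda$ as the multiplier, which then turns out to be the critical value divided by $n+2-d$), and for non-degeneracy it eliminates $z_n$ after a rescaling that places the critical point at $(1,\dots,1)$; the resulting Hessian is an $n\times n$ block matrix with a nonzero off-diagonal block, and the paper shows its determinant is $(-1)^n d(n+2-d)\neq 0$. You instead descend directly to the $n$ unconstrained Laurent coordinates $(y_1,\dots,y_{d-1},x_0,\dots,x_{n-d})$ on the $\mathbb{Z}_d$-quotient, already introduced in the remark following the lemma, which eliminates both the constraint and the multiplier at once. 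The payoff is visible in the Hessian: because every first-order $y$-derivative of $F$ carries the factor $d/t-1/y_k$ that vanishes at the critical point, the mixed $x$--$y$ block dies, the Hessian is block-diagonal, and the two blocks are transparent rank-one perturbations of the identity whose spectra you read off directly. That gives a sharper statement (all eigenvalues are explicit and nonzero) rather than just a nonzero determinant. Both arguments are elementary and correct; yours is shorter and keeps the computation entirely on the quotient, which is where $W_L$ actually lives.

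One small presentational remark: after concluding $y_k=1$ and $x_j=\lambda$ with $\lambda^{n+2-d}=d^d$, you observe the $n+2-d$ critical points are pairwise distinct. That is true and worth noting, though the lemma only asserts non-degeneracy of each; the uniqueness of the critical point over each $\lambda$ is what the paper secures by the quotient remark preceding the lemma (the $d$ upstairs solutions form a single $\mathbb{Z}_d$-orbit), while it falls out for free in your coordinates.
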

\begin{proof}
Let $f=z_0\dotsi z_n -1$. A point $\mathbf{z}=(z_0,\dots,z_n)$ is critical for $W_L$ if
\begin{equation}\label{proportional-gradients}
    \nabla_\mathbf{z} W_L = \lambda \nabla_\mathbf{z} f\quad \hbox{for some }\lambda\in\mathbb{C}.
\end{equation}
Let $z=(z_0+\dotsi+z_{d-1})/d$. Then, equation (\ref{proportional-gradients}) implies that
\begin{align*}
   d^dz^{d-1}&=\lambda/z_k \quad\hbox{if } 0\leq k\leq d-1,\\
   z_k&=\lambda \quad\quad\ \hbox{if } d\leq k\leq n.
\end{align*}
 In particular $z_0=z_1=\dots=z_{d-1}=z$, and $\lambda=d^dz^d$. Since $z_0\dotsi z_n =1$, we see that $z^d\times \lambda^{n-d+1}=1$, and therefore that $\lambda^{n-d+2}=d^d$. The corresponding critical value is $d^dz^d+(n-d+1)\lambda = (n+2-d)\lambda $.

Let $\mathbf{z}$ be a critical point corresponding to a pair $(\lambda,z)$ such that
\begin{equation*}
    \lambda^{n-d+2}=d^d\quad\text{and}\quad z^d =\lambda/d^d.
\end{equation*}
To show it is non-degenerate, we change to more convenient coordinates using the transformation
\begin{align*}
   z_k&\mapsto z\cdot z_k \ \quad\hbox{if } 0\leq k\leq d-1,\\
   z_k&\mapsto \lambda\cdot z_k \quad\ \hbox{if } d\leq k\leq n.
\end{align*}
The critical point then becomes $\mathbf{z}=(1,\dots,1)$, and
\begin{equation*}
    \frac{1}{\lambda}W=\frac{1}{d^d}(z_0+\dotsi+z_{d-1})^d+z_d+\dotsi+z_{n-1}+\frac{1}{z_0\dotsi z_{n-1}}.
\end{equation*}
The Hessian is computed to be the block matrix
\begin{equation*}
\text{Hess}_{\mathbf{z}}(W/\lambda) =
\left(\begin{array}{@{}c|c@{}}
  \begin{matrix}
  c\mathbf{1}_d+I
  \end{matrix}
  & \mathbf{1} \\
\hline
  \mathbf{1} &
  \begin{matrix}
  \mathbf{1}_{n-d}+I
  \end{matrix}
\end{array}\right),    
\end{equation*}
where $c=2-1/d$, and $\mathbf{1}_d$ is the $d\times d$ matrix whose coefficients are all $1$. The determinant of this matrix is $(-1)^nd(n-d+2)$, so it is invertible.
\end{proof}
\begin{remark}
The critical values above are also the \emph{small} eigenvalues of the map
\begin{equation}\label{quantum-multiplication}
    QH(X_d)\rightarrow QH(X_d):\quad \alpha \mapsto c_1(X_d)\star \alpha.
\end{equation}
There is one more eigenvalue, the \emph{big} eigenvalue, which is $0$. It is not seen by the Lagrangian torus $L\subseteq X_d$ when $d<n+1$.
\end{remark}
\begin{corollary}
For each \emph{small} eigenvalue $\lambda$ of (\ref{quantum-multiplication}), there is $\mathbb{C}^*$-local system $\xi_\lambda$ on $L$ such that $(L,\xi_{\lambda})$ split-generates the component $\Fuk(X_d)_{\lambda}$ of the Fukaya category.
\end{corollary}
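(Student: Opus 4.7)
The plan is to apply the standard split-generation criterion for monotone Lagrangian tori in a Fano variety: if $L$ is a monotone Lagrangian torus and $\xi\in\spec(\mathbb{C}[H_1(L,\mathbb{Z})])$ is a non-degenerate critical point of $W_L$ with critical value $\lambda$, then $(L,\xi)$ split-generates the summand $\Fuk(X_d)_\lambda$. This criterion appears essentially verbatim in \cite{sheridan-fano} (see also \cite{ritter-smith}), and relies on Abouzaid's generation criterion: non-degeneracy of the critical point forces the closed-open map $\text{CO}^0\colon QH^*(X_d)_\lambda \to HF^*((L,\xi),(L,\xi))$ to hit the unit of a non-vanishing Floer cohomology.

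First I would invoke the preceding lemma, which produces exactly $n+2-d$ critical points of $W_L$, indexed by the solutions of $\lambda^{n+2-d}=d^d$, and shows each is non-degenerate. By the remark immediately preceding the corollary, these critical values are precisely the small eigenvalues of the quantum multiplication map $c_1(X_d)\star -$ on $QH^*(X_d)$. This bijection defines, for each small eigenvalue $\lambda$, a distinguished local system $\xi_\lambda$ on $L$ after choosing a representative in the $\mathbb{Z}_d$-cover \eqref{mirror-space}.

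Next I would verify the hypotheses of the split-generation criterion for each $(L,\xi_\lambda)$. By the Cho-Oh formula for monotone Lagrangian tori, the self-Floer cohomology $HF^*((L,\xi_\lambda),(L,\xi_\lambda))$ is the Clifford algebra of the Hessian $\text{Hess}_{\xi_\lambda}(W_L)$, which is a non-zero matrix algebra by non-degeneracy. The energy-filtration argument on $\text{CO}^0$ identifies the eigenvalue by which $c_1(X_d)$ acts on $HF^*((L,\xi_\lambda),(L,\xi_\lambda))$ with the critical value $W_L(\xi_\lambda)=\lambda$, placing the brane in $\Fuk(X_d)_\lambda$; the TQFT unit axiom for $\text{CO}^0$ then ensures that the unit of $QH^*(X_d)_\lambda$ maps to the unit of $HF^*$.

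The only genuinely subtle point will be the verification that $\text{CO}^0$ is non-zero on the component $QH^*(X_d)_\lambda$, which is precisely where non-degeneracy of the critical point of $W_L$ enters. This step is carried out essentially word-for-word as in \cite{mypaper1} for the index-$1$ case $d=n+1$; in fact the present setup with $d<n+1$ is simpler, because the big eigenvalue $0$ of quantum multiplication is absent, so the summands $\Fuk(X_d)_\lambda$ exhaust the entire monotone Fukaya category and $L$, equipped with the collection of local systems $\{\xi_\lambda\}$, split-generates $\Fuk(X_d)$ as a whole.
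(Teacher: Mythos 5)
Your overall route is the same as the paper's: the preceding lemma supplies non-degenerate critical points of $W_L$ with critical values matching the small eigenvalues of $c_1\star -$, and one then feeds this into a split-generation criterion of Abouzaid/Ritter--Smith/Sheridan type. The paper's proof of the corollary is literally a pointer to Proposition~\ref{split-generation} in the appendix, whose proof establishes $HF^*(\mathbf{L}_b)\neq 0$ via the canonical model of the FOOO $A_\infty$-algebra on $H^*(L)$, the divisor axiom (Lemma~\ref{divisor axiom}), and a Nakayama argument that forces $\mathfrak{m}_1^b=0$ whenever $\mathfrak{m}_b$ is a critical point of $W_L$; it then applies \cite[Theorem 12.19]{ritter-smith}. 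Your sketch instead appeals to a Cho--Oh style computation identifying $HF^*$ with a Clifford algebra of the Hessian and appears to treat non-degeneracy as essential for $HF^*\neq 0$. This is slightly misleading: the paper's argument only uses that $b$ is a critical point (the differential vanishes already from $d_bW_L=0$); non-degeneracy enters only through the requirement that $QH(X_d)_\lambda$ be one-dimensional, which is a hypothesis of Proposition~\ref{split-generation} that neither your sketch nor the paper's one-line proof verifies explicitly.

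There is, however, a genuine error in your final paragraph. You claim that for $d<n+1$ ``the big eigenvalue $0$ of quantum multiplication is absent, so the summands $\Fuk(X_d)_\lambda$ exhaust the entire monotone Fukaya category and $L$ $\dots$ split-generates $\Fuk(X_d)$ as a whole.'' This contradicts the remark immediately preceding the corollary: $0$ \emph{is} an eigenvalue of $c_1(X_d)\star -$ for all Fano degrees $d\leq n+1$; it is merely not \emph{seen} by the torus $L$ when $d<n+1$, in the sense that $W_L$ has no critical point with value $0$. Consequently the small components do not exhaust $\Fuk(X_d)$, and the torus (with any local system) does not split-generate $\Fuk(X_d)_0$. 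The paper's own introduction makes the same point, referring to Sheridan's Lagrangian spheres as generators of the big component. The corollary claims nothing about the big component, so your final sentence both overstates the result and misreads the structure of $QH(X_d)$.
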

\begin{proof}
See Proposition \ref{split-generation} in the Appendix.
\end{proof}
\subsection{The degree 4 delPezzo} We now study an example of a non-cyclic $4$-fold cover. Consider the delPezzo surface $X=\text{Bl}_5(\mathbb{P}^2)$. It admits a $4$-fold covering map to $\mathbb{P}^2$; the latter has pseudo-index $j_{\mathbb{P}^2}=3$. Such a map can be seen as a composition of $2$-fold cyclic coverings
\begin{equation*}
    \phi:\text{Bl}_5(\mathbb{P}^2)\xrightarrow{\phi_1} Q_2\xrightarrow{\phi_0} \mathbb{P}^2,
\end{equation*}
where $\phi_{0}:Q_2\rightarrow \mathbb{P}^2$ is a two dimensional quadric branched along a conic curve $C\subseteq \mathbb{P}^2$. The double covering $\text{Bl}_5(\mathbb{P}^2)\xrightarrow{\phi_1} Q_2$ in turn is branched along an elliptic curve $E\subseteq Q_2$ as we shall explain below.

Let $\omega_{\text{FS}}$ be the Fubini-Study metric on $\mathbb{P}^2$, scaled so that 
\begin{equation*}
    c_1(\mathbb{P}^2) = 3[\omega_{\text{FS}}] \quad \text{in}\ H_{\text{dR}}^2(\mathbb{P}^2,\mathbb{R}).
\end{equation*}
Then, 
\begin{equation}
    L = \{[x_0:x_1:x_2]\in \mathbb{P}^2\ |\ \abs{x_0}=2\abs{x_1}=2\abs{x_2}\}\subseteq (\mathbb{P}^2,\omega_{\FS})
\end{equation}
is a Maslov positive Lagrangian torus. Up to automorphism, it bounds exactly three discs $u_k:\disk\rightarrow(\mathbb{P}^2,L)$ of Maslov index $2$ through the point $[2:1:1]$, each of them is Fredholm regular:
\begin{gather}\label{Clifford-discs}
    u_0(z) = [2z:1:1]; \quad u_1(z)=[2:z:1]; \quad u_2(z)=[2:1:z]\\ 
    \text{Area}_{\omega_{\FS}}(u_0)=1/2; \quad \text{Area}_{\omega_{\FS}}(u_1)=1/4; \quad \text{Area}_{\omega_{\FS}}(u_2)=1/4.\notag
\end{gather}
This collection of discs generates $H_2(\mathbb{P}^2,L,\mathbb{Z})$. The super-potential is
\begin{equation*}
W_{L}= z_0+z_1+z_2,\quad \hbox{ where } z_k=z_{\partial u_k}\in \mathbb{c}[H_1(L,\mathbb{Z})].  
\end{equation*}
Note that $z_0z_1z_2=1$. We choose a conic $C\subseteq \mathbb{P}^2$ of the form
\begin{equation*}
    C_{\epsilon} = \{ x_1x_2-\epsilon x_0^2 =0\}\subseteq\mathbb{P}^2, \quad \abs{\epsilon}<1.
\end{equation*}
The quadric that is branched along $C_{\epsilon}$ is given by the equation
\begin{equation*}
    Q_2=\{t^2=x_1x_2-\epsilon x_0^2\}\subseteq \mathbb{P}^3.
\end{equation*}
By Theorem \ref{super-potential-theorem}, there is a K\"ahler form $\widehat{\omega}$ on $Q_2$ such that the pre-image $\widehat{L}\defeq\phi_0^{-1}(L)\subseteq (Q_2,\widehat{\omega})$ is a Lagrangian torus and
\begin{equation}\label{potential for L-hat}
    (\phi_0)_*W_{\widehat{L}}=z_0 + (z_1+z_2)^2.
\end{equation}
The classes that lift to Maslov index $2$ holomorphic discs in $(Q_2,\widehat{L})$ are
\begin{equation}\label{classes that lift}
    [u_0],\ 2[u_1],\ 2[u_2],\hbox{ and }[u_1+u_2] \quad \hbox{in }H_2(\mathbb{P}^2,L).
\end{equation}
Their lifts in fact generate $H_2(Q_2,\widehat{L},\mathbb{Q})$.
Using the area computation in (\ref{Clifford-discs}), one verifies that $\widehat{L}\subseteq (Q_2,\widehat{\omega})$ is monotone.

In the embedding $Q_2\subseteq \mathbb{P}^3$, we have $c_1(Q_2)=\mathscr{O}_{Q_2}(2)$. Therefore, A smooth anti-canonical divisor is an elliptic curve $E$ given as the intersection of a second quadric $Q_2'$ with $Q_2$. Consider the following choice
\begin{equation*}
    Q_2' = \{tx_0=\epsilon f(x_0,x_1,x_2)\}, \quad \abs{\epsilon} \hbox{ small},\ f\hbox{ generic}.
\end{equation*}
Then, setting $E=Q_2\cap Q_2'$, we see that
\begin{equation*}
    \phi_0(E) = \{x_1x_2x_0^2-\epsilon x_0^4-\epsilon^2 f(x_0,x_1,x_2)^2=0\}\subseteq \mathbb{P}^2,
\end{equation*}
which is a small deformation of the union of lines $\Delta_0=\{x_1x_2x_0^2=0\}\subseteq\mathbb{P}^2$. This singular quartic is disjoint from the torus $L\subseteq \mathbb{P}^2$ and its intersection number with each one of the classes that lift (see (\ref{classes that lift})) is $2$. It follows that when $\abs{\epsilon}$ is small, the monotone Lagrangian torus $\widehat{L}$ is disjoint from the elliptic curve $E$, and has a vanishing Maslov class in its complement.

The $2$-fold covering of $Q_2$ branched along $E$ is again a Fano surface $S$. It's Euler characteristic is
\begin{equation*}
    \chi(S) = 2\chi(Q_2\backslash E) + \chi(E) = 2\chi(Q_2)-\chi(E)=8.
\end{equation*}
Hence $S\cong\text{Bl}_5(\mathbb{P}^2)$. By Theorem \ref{super-potential-theorem} and Corollary \ref{monotone case}, there is an anti-canonical K\"ahler form $\omega_S$ on $S$ and a monotone Lagrangian torus $L^+\subseteq (S,\omega_S)$ whose super-potential is given by
\begin{equation}\label{potential for L+}
    \phi_* W_{L^+} = (z_0 + (z_1+z_2)^2)^2-4.
\end{equation}
In order to compare our formulas with the literature (see \cite{galkin-mutations} and \cite[Table 1]{pascaleff-tonkonog}), it is instructive to write the super-potentials (\ref{potential for L-hat}) and (\ref{potential for L+}) as Laurent polynomials. The inclusion
\begin{equation*}
    \mathbb{C}[H_1(\widehat{L},\mathbb{Z})] \hookrightarrow \mathbb{C}[H_1(L,\mathbb{Z})]
\end{equation*}
is generated by the monomials $x=z_0$ and $y=z_1z_2^{-1}$. In these coordinates,
\begin{equation*}
    W_{\widehat{L}} = x+\frac{1}{xy}(1+y)^2.
\end{equation*}
The quadric $Q_2$ is isomorphic to $\mathbb{P}^1\times\mathbb{P}^1$ whose toric super-potential is (see \cite{galkin-mutations})
\begin{equation}\label{super potential of p1 times p1}
    W_{\mathbb{P}^1\times\mathbb{P}^1} = x+y+\frac{1}{x}+\frac{1}{y}.
\end{equation}
Our formula is related to the one above by the following birational transformation of $(\mathbb{C}^*)^2$,
\begin{equation*}
    (x,y)\mapsto (x/(1+y),xy/(1+y)).
\end{equation*}
Such a birational transformation is called a \emph{mutation}, see \cite{katzarkov-przyj-old-and-new} and \cite{acgk-mutations} for the relevant literature.
Similarily, the inclusion
\begin{equation*}
    \mathbb{C}[H_1(L^+,\mathbb{Z})]\hookrightarrow \mathbb{C}[H_1(L,\mathbb{Z})]
\end{equation*}
is generated by $x=z_0^2$ and $y=z_0 z_1^2$. In these coordinates,
\begin{equation*}
    W_{L^+} = \frac{\left(x+\frac{1}{y}(1+y)^2\right)^2}{x}.
\end{equation*}
The super-potential computed for the delPezzo surface $S=\text{Bl}_5(\mathbb{P}^2)$ in \cite{galkin-mutations} is
\begin{equation}\label{super potential of delPezzo}
    W_S = \frac{(1+x)^2(1+y)^2}{xy}-4.
\end{equation}
Again, it is related to our computation of $W_{L^+}$ through the \emph{mutation}
\begin{equation*}
    (x,y)\mapsto (xy/(1+y)^2,y).
\end{equation*}
\begin{remark}
There is a more direct construction of a monotone Lagrangian torus $L^-\subseteq \text{Bl}_5(\mathbb{P}^2)$. Indeed, the toric variety $\mathbb{P}^1\times\mathbb{P}^1$ contains a monotone Lagrangian torus $\overline{L}$ arising as the central fiber of its moment map. The super-potential of $\overline{L}$ is (\ref{super potential of p1 times p1}).
We can construct a double covering of $\mathbb{P}^1\times\mathbb{P}^1$ by branching along a smoothing of the toric boundary. This double covering is again $\text{Bl}_5(\mathbb{P}^2)$. The pre-image $L^-$ of $\overline{L}$ is a monotone Lagrangian torus and its super-potential is (\ref{super potential of delPezzo}).
\end{remark}
\subsection{The second Hirzebruch surface} We now present an example of a non-Fano manifold where Theorem \ref{main theorem} still applies. The second Hirzebruch surface $\mathbb{F}_2=\mathbb{P}(\mathscr{O}_{\mathbb{P}^1}\oplus \mathscr{O}_{\mathbb{P}^1}(-2))$ is a toric surface whose toric super-potential is
\begin{equation}\label{toric-super-potential}
    W= x+y+\frac{1}{x}+\frac{1}{x^2y}.
\end{equation}
The surface $\mathbb{F}_2$ can also be obtained from $\mathbb{P}^2$ by taking a $2$-fold covering branched along the divisor $C=\{x_0x_1=0\}$ and blowing up to resolve the $A_1$ singularity that arises. Let $\phi:\mathbb{F}_2\rightarrow\mathbb{P}^2$ be the resulting map. Note that $\phi^{-1}([0:0:1])=E$ is the $-2$ curve in $\mathbb{F}_2$. Let $\Delta=\{x_0x_1x_2=0\}\subseteq \mathbb{P}^2$ be the toric boundary. The reduced pre-image $D=\phi^{-1}(\Delta)^{\text{red}}$ is an snc anti-canonical divisor with $4$ components, one of which is the $-2$ curve $E$. The Clifford torus $L_{\cl}\subseteq \mathbb{P}^2\backslash\Delta$ lifts to an admissible torus $L=\phi^{-1}(L_\cl)\subseteq \mathbb{F}_2\backslash D$. Although $\mathbb{F}_2$ is not Fano ($c_1(E)=0$), one can still define the super-potential $W_L$ the same way as before (i.e. Definition \ref{super-potential-definition}), by deforming the complex structure away from $D$. The only difference is that we need to explicitly rule out spherical bubbles with vanishing Chern number from the boundary of moduli spaces of Maslov index $2$ discs.

\begin{lemma}
Suppose $\alpha\in H_2(\mathbb{F}_2,L)$ is a Maslov index $2$ class which splits as $\alpha=A+\tilde{\alpha}$ for some spherical class $A$ such that $c_1(A)=0$. Let $J$ be an almost complex structure which preserves $D$. If $A$ supports a $J$-holomorphic sphere, and $\alpha$ and $\tilde{\alpha}$ support $J$-holomorphic discs, then $A=0$.
\end{lemma}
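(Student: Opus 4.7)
The plan is to leverage positivity of intersection between $J$-holomorphic discs and the components of the $J$-preserved divisor $D$, exploiting in particular that $E$ is itself one of the components of $D$.

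First I would pin down $A$. The group $H_2(\mathbb{F}_2,\mathbb{Z})$ is freely generated by $E$ and a fiber $F$ of the ruling, with intersection form $E^2=-2$, $F^2=0$, $E\cdot F=1$; adjunction gives $-K_{\mathbb{F}_2}=2E+4F$, hence $c_1(aE+bF)=2b$. The hypothesis $c_1(A)=0$ therefore forces $A=aE$ for some integer $a$. Because $A$ is represented by a $J$-holomorphic sphere and $\omega_{\mathbb{F}_2}(E)>0$, this sphere is non-constant only if $a\geq 1$; I will assume for contradiction that $a\geq 1$.

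Next I would translate the Maslov data into intersection numbers. Admissibility of $L$ gives $\eta_L=0$, so the Maslov identity \eqref{Maslov identity} reduces to $\mu_L(u)=2\,u\cdot D$ on any disc with boundary in $L$. Additivity of $\mu_L$ under the splitting $\alpha=A+\tilde\alpha$, combined with $\mu_L(A)=2c_1(A)=0$ and $\mu_L(\alpha)=2$, yields $\mu_L(\tilde\alpha)=2$, and therefore $\alpha\cdot D=\tilde\alpha\cdot D=1$.

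Finally I would invoke positivity of intersection. Because $J$ preserves $D$, every irreducible component of $D$ is a $J$-holomorphic curve; the discs representing $\alpha$ and $\tilde\alpha$ have boundary on $L\subseteq \mathbb{F}_2\setminus D$, so neither disc lies inside any component of $D$, and every interior intersection is counted with positive sign. In particular $\alpha\cdot E\geq 0$ and $\tilde\alpha\cdot E\geq 0$. Since $E$ is one of the four irreducible components of $D$ and the contributions of the remaining three to $\tilde\alpha\cdot D=1$ are all non-negative, we obtain $\tilde\alpha\cdot E\leq 1$. On the other hand,
\[
\alpha\cdot E = A\cdot E+\tilde\alpha\cdot E = -2a+\tilde\alpha\cdot E\geq 0,
\]
so $\tilde\alpha\cdot E\geq 2a\geq 2$, contradicting $\tilde\alpha\cdot E\leq 1$. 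Hence $a=0$ and $A=0$. There is no serious obstacle beyond keeping careful track of the components of $D$; the fact that $E$ is \emph{simultaneously} the only source of Chern-zero spherical classes and a component of $D$ is what collapses the inequalities.
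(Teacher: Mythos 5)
Your proof is correct and takes a somewhat different path from the paper's. The paper argues abstractly: since $\alpha\cdot D=\tilde\alpha\cdot D=1$ and all four intersections $\alpha\cdot D_k,\tilde\alpha\cdot D_k$ are non-negative (discs with boundary off $D$ and a $J$-invariant divisor), at most two components are hit, so there exist two components $D_i,D_j$ with $\alpha\cdot D_i=\tilde\alpha\cdot D_i=\alpha\cdot D_j=\tilde\alpha\cdot D_j=0$, hence $A\cdot D_i=A\cdot D_j=0$; the paper then invokes positivity of intersection for the \emph{sphere} to force it into $D_i\cap D_j$, a point, so the sphere is constant. You instead pin down the class explicitly: $H_2(\mathbb{F}_2)=\mathbb{Z}E\oplus\mathbb{Z}F$ with $c_1(aE+bF)=2b$, so $c_1(A)=0$ forces $A=aE$; then $a\geq 0$ from positivity of area, $\tilde\alpha\cdot E\leq 1$ from the budget $\tilde\alpha\cdot D=1$, and $\alpha\cdot E=\tilde\alpha\cdot E-2a\geq 0$ together force $a=0$. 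Your route only needs positivity of intersection for the \emph{discs} against $E$, never for the sphere, which sidesteps the case analysis (sphere contained in vs.\ disjoint from a component) that the paper's phrasing leaves implicit; the tradeoff is that your argument leans on the explicit intersection form of $\mathbb{F}_2$ and the identification of $E$ as the unique source of Chern-zero spherical classes, and so is more tailored to this surface, whereas the paper's version is phrased in terms that would survive in a more general snc setting.

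One small presentational point: you should say explicitly that $\tilde\alpha\cdot D_k\geq 0$ for every component $D_k$ (not just $E$) for the same positivity-of-intersection reason, since that is what you use to deduce $\tilde\alpha\cdot E\leq 1$ from $\tilde\alpha\cdot D=1$; as written this is implied but the quantifier is buried.
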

\begin{proof}
Let $(D_i)_{i=1}^4$ be the components of the snc divisor $D\subseteq\mathbb{F}_2$. Since $L$ is admissible, we have $\alpha\cdot D = \tilde{\alpha}\cdot D = 1$. Therefore, there are distinct $i,j\in\{1,2,3,4\}$ such that $\alpha\cdot D_i=\tilde{\alpha}\cdot D_i=0$ and $\alpha\cdot D_j=\tilde{\alpha}\cdot D_j=0$. It follows (by positivity of intersection) that the spherical bubble supported in $A$ lies at the intersection $D_i\cap D_j$, so it must be constant. 
\end{proof}

%A more general method which allows one to resolve this issue is the choice of a \emph{system of divisors}, as developed by N. Sheridan in \cite{sheridan-versality}.

Recall that the super-potential associated with $L_{\cl}\subseteq \mathbb{P}^2$ is
\begin{equation*}
    W_{L_\cl}= x+y+\frac{1}{xy},
\end{equation*}
and that $W_{L_\cl}^C = x+y$. Using Theorem \ref{super-potential-theorem}, we deduce that
\begin{equation*}
    W_L = x^2+xy+y^2+\frac{1}{xy} \quad \text{on}\ (\mathbb{C}^*)^2/\mathbb{Z}_2.
\end{equation*}
Using the coordinates $u=(xy)^{-1}$ and $v=x^2$ on the quotient, one sees that $W_L$ is exactly the toric super-potential (\ref{toric-super-potential}) of $\mathbb{F}_2$. 

We warn the reader that our discussion above only accounts for the contributions of \emph{smooth} discs to the super-potential. The full mirror to $\mathbb{F}_2$ has an extra term coming from contributions of \emph{nodal} discs which we are completely ignoring. These contributions and their relevance to mirror symmetry are further discussed in \cite[\S 3.2]{Auroux-F2}.

%\subfile{Appendix}
\section{Appendix}
The purpose of this appendix is to collect results from the literature leading up to the proof of the split-generation result stated in Proposition \ref{split-generation}.
\subsection{Notation} 
Let $(X,\omega)$ be a closed symplectic manifold such that
\begin{equation*}
[\omega]=c_1(X).    
\end{equation*}
\begin{definition}
A Lagrangian brane $\mathbf{L}=(L,P,\xi)$ is a closed oriented monotone Lagrangian submanifold $L\subseteq X$ which is equipped with a spin structure $P$ and a $\mathbb{C}^*$-local system $\xi$. 
\end{definition}
We have already encountered an invariant for Lagrangian branes,
\begin{equation*}
    m_0(\mathbf{L}) = \sum_{\mu_L(\beta)=2} m_{0,\beta}(L,P)\hol_{\xi}(\partial \beta).
\end{equation*}
We will often omit the spin structure $P$ in our notation.
Given $\lambda\in\mathbb{C}$, the collection of Lagrangian branes that satisfy the equation $m_0(\mathbf{L})=\lambda$ are the objects of the $\lambda$-component of Fukaya's $A_{\infty}$-category $\Fuk(X)_{\lambda}$; one usually restricts to a finite or countable collection $(\mathbf{L}_i)$. For each pair $(i,j)$, we choose a Hamiltonian $H^{ij}:X\rightarrow \mathbb{R}$ and a time-dependent almost complex structure $J_t^{ij}$. The pair $(H,J_t)=(H^{ij},J_t^{ij})$ is called a Floer datum. A Floer trajectory for the pair $(L_i,L_j)$ is a map $u:\mathbb{R}\times [0,1]\rightarrow X$ such that $u(-,0)\in L_i$,  $u(-,1)\in L_j$,
\begin{gather}\label{Floer trajectories}
    \partial_s u + J_t (\partial_t u - X_H(u)) = 0, \hbox{ and }\ 
    \int_Z \abs{du}^2< +\infty.
\end{gather}
We follow the conventions of \cite{PL}, where the Hamiltonian vector field $X_H$ solves the equation $\iota_{X_H}\omega = -dH$.
The first step towards building Fukaya's $A_{\infty}$-category is to choose perturbation data $(H^{ij},J_t^{ij})$ for each pair $(i,j)$ such that
$\varphi_1^{H^{ij}}(L_i)\cap L_j$ is transverse, and solutions to (\ref{Floer trajectories}) are Fredholm regular.

Given two objects $\mathbf{L}_0,\mathbf{L_1}\in \Fuk(X)_{\lambda}$, the hom-space $(CF^*(\mathbf{L}_0,\mathbf{L}_1),\mu^1)$ is a $\mathbb{Z}_2$-graded chain complex over the field of complex numbers .
Let $\mathscr{C}(H)$ be the set of Hamiltonian chords for $H=H^{01}$. That is, paths $\gamma:[0,1]\rightarrow X$ such that $\gamma(0)\in L_0$, $\gamma(1)\in L_1$, and
\begin{equation*}
    \partial_t\gamma(t) = X_{H}(\gamma(t)).
\end{equation*}
Floer's complex is the vector space
\begin{equation*}
    CF^*(\mathbf{L}_0,\mathbf{L}_1) = \bigoplus_{y\in\mathscr{C}(H)} \mathbb{C}\langle y\rangle,
\end{equation*}
where the differential is given by
\begin{equation*}
    \mu^1(y_1) = \sum_{\text{ind}(u)=1} \text{sgn}(u)\hol(\partial u)y_0.
\end{equation*}
The sum above is taken over isolated points of the space $\matheu{M}_Z(y_0,y_1)$ of \emph{unparametrized} Floer trajectories (\ref{Floer trajectories}) such that
\begin{gather*}
 u(-\infty,-)=y_0\quad\hbox{and}\quad u(+\infty,-)=y_1.
\end{gather*}
The integer $\text{sgn}(u)\in\{-1,1\}$ is a sign computed using the spin structures on the Lagrangian branes, and $\hol(\partial u)$ is a holonomy factor computed using their associated local systems. Higher operations $(\mu^d)_{d\geq 2}$ are constructed similarly, see \cite{PL,ritter-smith,sheridan-fano} for the full construction. Given $\mathbf{L}\in \Fuk(X)_\lambda$, we set $HF^*(\mathbf{L})$ to be the cohomology of $\mu^1$ for the pair $(\mathbf{L},\mathbf{L})$. This is a $\mathbb{Z}_2$-graded associative algebra over $\mathbb{C}$ with a unit.

\subsection{Generation} Let $L$ be an oriented monotone Lagrangian torus which is equipped with a spin structure. An element $b\in H^1(L,\mathbb{C})$ determines a local system
\begin{equation*}
    \xi_b:\pi_1(L)\rightarrow \mathbb{C}^*,\quad [\gamma]\mapsto \exp(\gamma\cdot b). 
\end{equation*}
It also determines an evaluation map
\begin{equation*}
    ev_b: \mathbb{C}[H_1(L,\mathbb{Z})]\rightarrow \mathbb{C},\quad z_{\beta}\mapsto \langle\partial\beta,b \rangle.
\end{equation*}
Its kernel is a maximal ideal $\mathbf{m}_b\unlhd \mathbb{C}[H_1(L,\mathbb{Z})]$, and in fact,
\begin{equation*}
    m_0(L,\xi_b) = W(\mathbf{m}_b).
\end{equation*}
All maximal ideals of $\mathbb{C}[H_1(L,\mathbb{Z})]$ arise in this way.
The remainder of the appendix is dedicated to proving the following result.
\begin{proposition}\label{split-generation}
If $\lambda\in\mathbb{C}$ is a critical value of $W:\mathbb{C}^*\rightarrow \mathbb{C}$, then it is an eigenvalue of the quantum multiplication map
\begin{equation*}
    QH(X)\rightarrow QH(X):\quad  A\mapsto c_1(X)\star A.
\end{equation*}
Suppose further that the generalized eigenspace $QH(X)_{\lambda}$ is $1$-dimensional. Let $b\in H^1(L,\mathbb{C})$ be an element such that $\mathfrak{m}_b\in W^{-1}(\lambda)$ is a critical point. Then, the object $\mathbf{L}_b=(L,\xi_b)$ split-generates the component $\Fuk(X)_\lambda$ of the monotone Fukaya category.
\end{proposition}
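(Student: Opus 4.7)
The plan is to apply Abouzaid's split-generation criterion via the open-closed map, after two preliminary computations: first that $HF^*(\mathbf{L}_b,\mathbf{L}_b)\neq 0$, and second that the closed-open map $\mathcal{CO}^0: QH(X)\to HF^*(\mathbf{L}_b)$ is forced to intertwine quantum multiplication by $c_1(X)$ with multiplication by the scalar $\lambda$.

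First I would verify that $\mathbf{L}_b$ is an object of $\Fuk(X)_\lambda$. The relation $m_0(\mathbf{L}_b) = W(\mathfrak{m}_b) = \lambda$ is immediate from the definition of $m_0$ as evaluation of $W$ at the maximal ideal $\mathfrak{m}_b$. Since $\mathfrak{m}_b$ is a critical point of $W$, the standard spectral-sequence computation of Floer cohomology for a monotone torus (Cho--Oh, Fukaya--Oh--Ohta--Ono) yields $HF^*(\mathbf{L}_b)\neq 0$; concretely, the leading-order Maslov-$2$ contribution to the Floer differential is the de Rham differential of $W$ evaluated at $b$, and this vanishes at a critical point. In particular the unit $1_{\mathbf{L}_b}$ is non-zero. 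The unital algebra homomorphism $\mathcal{CO}^0$ then sends $c_1(X)$ to $m_0(\mathbf{L}_b)\cdot 1_{\mathbf{L}_b} = \lambda\cdot 1_{\mathbf{L}_b}$, so $c_1(X)-\lambda\cdot 1_X$ cannot be a unit in $QH(X)$, and $\lambda$ is an eigenvalue of $c_1(X)\star$. This proves the first assertion.

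For split-generation, assume $QH(X)_\lambda$ is one-dimensional, so it is a direct summand $\mathbb{C}\cdot e_\lambda$ with $e_\lambda$ a rank-one idempotent in $QH(X)$. The restriction of $\mathcal{CO}^0$ to this summand sends $e_\lambda$ to $1_{\mathbf{L}_b}\neq 0$, hence is injective. I would then invoke the Cardy relation between $\mathcal{CO}^0$ and the open-closed map $\mathcal{OC}: HH_*(\mathrm{end}(\mathbf{L}_b))\to QH(X)$: the Mukai pairing on $HH_*$ and the Poincar\'e pairing on $QH(X)$ are intertwined by $\mathcal{OC}$ and $\mathcal{CO}^0$, so the non-vanishing of $\mathcal{CO}^0(e_\lambda)$ against the Hochschild unit forces $\mathcal{OC}$ to pair non-trivially against $e_\lambda$. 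Because $QH(X)_\lambda$ is one-dimensional and Poincar\'e duality is non-degenerate on it, this non-vanishing means $e_\lambda\in\operatorname{image}(\mathcal{OC})$. Abouzaid's split-generation criterion then concludes that $\mathbf{L}_b$ split-generates $\Fuk(X)_\lambda$.

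The main technical obstacle I expect is the careful deployment of the Cardy relation in the monotone, non-exact setting with $\mathbb{Z}_2$-grading and local systems, and in particular ensuring that $\mathcal{CO}^0$ indeed lands component-wise in the spectral decomposition corresponding to $\Fuk(X)_\lambda$. For these subtleties I would follow \cite{ritter-smith} for the construction and properties of $\mathcal{CO}$ and $\mathcal{OC}$, and \cite{sheridan-fano} for the component-wise decomposition arguments; the hypothesis that $QH(X)_\lambda$ is one-dimensional is precisely what allows non-vanishing of a single pairing to suffice for surjectivity of $\mathcal{OC}$ onto this summand.
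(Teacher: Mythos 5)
Your proposal is correct and ultimately rests on the same two inputs as the paper: (i) the non-vanishing of $HF^*(\mathbf{L}_b)$ at a critical point of $W$, and (ii) the closed-open/open-closed formalism in the monotone setting. The difference lies entirely in where you choose to spend the effort. The paper's proof is almost entirely devoted to item (i): after reducing via Lemma \ref{morse bott = floer trajectories} to the Fukaya--Oh--Ohta--Ono canonical model on $H^*(L,\mathbb{C})$, it uses the divisor axiom (Lemma \ref{divisor axiom}) to compute $\widehat{\mathfrak{m}}^b_1(x) = d_b(\widehat{\mathfrak{m}}^b_0)(x)$ on $H^1(L)$, then applies Nakayama's lemma (using the $q$-adic filtration and the fact that $H^*$ of a torus is generated in degree $1$) to promote the vanishing of $\widehat{\mathfrak{m}}^b_1$ on $H^1(L)$ to vanishing on all of $H^*(L)$, obtaining $HF^*(\mathbf{L}_b)\cong H^*(L,\mathbb{C})$. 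Item (ii), including the eigenvalue claim and the Cardy/Abouzaid argument, is simply cited to \cite[Theorem 12.19]{ritter-smith}. You do the opposite: you cite item (i) as a known Cho--Oh/FOOO computation and spell out item (ii) by hand.

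Both routes are legitimate, but a couple of points in your sketch of (i) are worth flagging. Saying that ``the leading-order Maslov-$2$ contribution to the Floer differential is the de Rham differential of $W$'' only addresses $\mathfrak{m}_1$ restricted to $H^1(L)$. You still need an argument that vanishing there forces $\mathfrak{m}_1 = 0$ on all of $H^*(L)$; the paper does this via the surjectivity of the iterated $\widehat{\mathfrak{m}}_2$ on $\bigoplus_{l\geq 2} H^1(L)^{\otimes l}$ (a Nakayama argument over $\mathbb{C}[[q]]$), which is essentially the statement that $\mathfrak{m}_1$ is a derivation for $\mathfrak{m}_2$ because $\mathfrak{m}_0$ is a multiple of the unit. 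If you invoke the Oh spectral sequence instead, you must also argue that no higher differentials survive, which is precisely what the canonical-model formulation sidesteps. Neither of these is a serious gap given that you are appealing to established references, but a complete write-up should make the $H^1\Rightarrow H^*$ step explicit. One more small matter: your identification of $HF^*$ with the quantity controlled by the divisor axiom implicitly uses the comparison between the Hamiltonian-perturbed Floer complex and the Morse--Bott/singular-chains model, which is exactly what the paper's Lemma \ref{morse bott = floer trajectories} supplies; this is routine but not free.
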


By the generation result in \cite[Theorem 12.19]{ritter-smith}, it is enough to show that $HF(\mathbf{L}_b)\neq 0$. We will in fact show that there is a vector space isomorphism
\begin{equation*}
    HF(\mathbf{L}_b) \cong H^*(L,\mathbb{C}).
\end{equation*}
In the work of Fukaya-Oh-Ohta-Ono \cite{FO3-book1}, the authors construct the structure of an $A_{\infty}$-algebra $(\mathfrak{m}_k)_{k\geq 1}$ on the vector space $C_*(L,\mathbb{C})$ of singular chains on $L$; the monotonicity assumption ensures that we can work over $\mathbb{C}$, see \cite[Theorem 3.1.5]{FO3-book1}. 

Given two Lagrangian branes $\mathbf{L}_0$ and $\mathbf{L_1}$ whose underlying Lagrangians are transverse, the authors also upgrade the construction we outlined above for $CF^*(\mathbf{L}_0,\mathbf{L}_1)$ to an $A_{\infty}$-bimodule $C(\mathbf{L_0},\mathbf{L_1})$, with operations $\mathfrak{n}_{k_0,k_1}$ over the $A_{\infty}$-algebras $(C_*(L_0,\mathbb{C}),\mathfrak{m}_k^{\mathbf{L}_0})$ and $(C_*(L_1,\mathbb{C}),\mathfrak{m}_k^{\mathbf{L}_1})$, see \cite[Theorem 3.7.21]{FO3-book1}. We emphasize that when $L_0$ and $L_1$ are transverse, we have an identity of complexes
\begin{equation}\label{Floer 1 is Floer 2}
    (C(\mathbf{L_0},\mathbf{L_1}),\mathfrak{n}_{0,0}) = (CF^*(\mathbf{L_0},\mathbf{L_1}),\mu^1),
\end{equation}
provided one uses the same almost complex structure $(J_t)$, and the trivial Floer datum $H=0$: this is sufficient to achieve transversality for (\ref{Floer trajectories}).

More generally, the $A_{\infty}$-bimodule $(C(\mathbf{L_0},\mathbf{L_1}),\mathfrak{n}_{k_0,k_1})$ is constructed under the assumption that $L_0$ and $L_1$ intersect \emph{cleanly}, see \cite[Definition 3.7.48]{FO3-book1}. In the case when $\mathbf{L_0}=\mathbf{L_1}=\mathbf{L}$ (this is a clean intersection), one can make appropriate choices (see \cite[Proposition 3.7.73]{FO3-book1}) so that
\begin{equation}\label{bimodule is algebra}
    \mathfrak{n}_{k_1,k_0} = \mathfrak{m}^{\mathbf{L}}_{k_1+1+k_0}.
\end{equation}
Finally, it is shown in \cite[Theorem 5.3.14]{FO3-book1} that if $H$ is a Hamiltonian such that  both $L_0\cap L_1$ and $\varphi_1^{H}(L_0)\cap L_1$ are clean intersections, then there is a homotopy equivalence of $A_{\infty}$-bimodules
\begin{equation}\label{homotopy equivalence}
    \mathfrak{h}: (C(\mathbf{L_0},\mathbf{L_1}),\mathfrak{n}_{k_1,k_2})\rightarrow (C(\varphi_1^{H}(\mathbf{L_0}),\mathbf{L_1}),\mathfrak{n}'_{k_1,k_2}).
\end{equation}
The following is a direct consequence of (\ref{Floer 1 is Floer 2}), (\ref{bimodule is algebra}), and (\ref{homotopy equivalence}).
\begin{lemma}\label{morse bott = floer trajectories}
Let $\mathbf{L}$ be a Lagrangian brane. Then, as vector spaces over $\mathbb{C}$,
\begin{equation*}
    H((C_*(L,\mathbb{C}),\mathfrak{m}_1^{\mathbf{L}}))\cong HF^*(\mathbf{L}).
\end{equation*}
\end{lemma}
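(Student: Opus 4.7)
The plan is to chain together three identifications from the FOOO framework recalled just above, each of which is a direct quotation of a result from \cite{FO3-book1}.

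First I would apply the identity (\ref{bimodule is algebra}) to the clean self-intersection $\mathbf{L}_0 = \mathbf{L}_1 = \mathbf{L}$, which gives $\mathfrak{n}_{0,0} = \mathfrak{m}_1^{\mathbf{L}}$. In particular, the $(0,0)$-piece of the $A_\infty$-bimodule for the pair $(\mathbf{L},\mathbf{L})$ coincides, on the nose, with the deformed differential on singular chains:
\begin{equation*}
(C(\mathbf{L}, \mathbf{L}), \mathfrak{n}_{0,0}) = (C_*(L, \mathbb{C}), \mathfrak{m}_1^{\mathbf{L}}).
\end{equation*}
Next I would pick a generic small Hamiltonian $H : X \to \mathbb{R}$ so that $\varphi_1^H(L) \cap L$ is transverse; since $L \cap L$ is clean by construction and any transverse intersection is automatically clean, the hypotheses of (\ref{homotopy equivalence}) are met, and the resulting $A_\infty$-bimodule homotopy equivalence $\mathfrak{h}$ restricts in particular to a chain homotopy equivalence on the $(0,0)$-pieces. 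Finally, because $\varphi_1^H(L) \cap L$ is transverse, the identity (\ref{Floer 1 is Floer 2}) identifies the target of $\mathfrak{h}$ with the ordinary Floer complex $(CF^*(\varphi_1^H(\mathbf{L}), \mathbf{L}), \mu^1)$, whose cohomology is $HF^*(\mathbf{L})$ by definition. Chaining these three identifications yields the desired vector-space isomorphism on cohomology.

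The argument is essentially formal once the FOOO machinery has been invoked, so there is no serious analytic obstacle. The only point requiring minor care is to choose the Hamiltonian $H$, the almost-complex-structure data, and any auxiliary perturbations consistently across the three bimodule constructions so that (\ref{homotopy equivalence}) applies without further modification; this is precisely what \cite[Theorem 5.3.14]{FO3-book1} guarantees, and reduces to standard transversality and interpolation inside the space of Floer data.
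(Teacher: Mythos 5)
Your proposal is correct and takes exactly the approach the paper intends: the paper's proof is precisely the one-line statement that the lemma is "a direct consequence of (\ref{Floer 1 is Floer 2}), (\ref{bimodule is algebra}), and (\ref{homotopy equivalence})," and you have spelled out the obvious chain of identifications (Morse--Bott complex for the diagonal pair, homotopy equivalence to the transverse pair via a small Hamiltonian, identification with the Floer complex) that this summary leaves implicit. Your note that the $(0,0)$-piece of the $A_\infty$-bimodule homotopy equivalence is what induces the vector-space isomorphism on cohomology is the only detail the paper suppresses, and you state it correctly.
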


The $A_{\infty}$-algebra $(C_*(L,\mathbb{C}),\mathfrak{m}_k^{\mathbf{L}})$ is more computable because it satisfies a divisor axiom. In contrast, $HF^*(\mathbf{L})$ can be defined over $\mathbb{Z}$ and as such it cannot satisfy a divisor axiom.
\begin{lemma}(\cite[Lemma 13.1]{adic-convergence})\label{divisor axiom}
The $A_{\infty}$-algebra $(C_*(L,\mathbb{C}),\mathfrak{m}_k^{\mathbf{L}})$ has a canonical model $(H^*(L,\mathbb{C}),\mathfrak{m}_k)$. Each term $\mathfrak{m}_k$ has a decomposition
\begin{equation*}
    \mathfrak{m}_{k} = \sum_{\beta\in H_2(X,L)} \mathfrak{m}^L_{k,\beta}\emph{\hol}_{\xi}(\partial \beta).
\end{equation*}
Moreover, for all $s\geq 0$, $k\geq 0$, and $b,x_1,\dots,x_k\in H^1(L,\mathbb{C})$,
\begin{equation*}
    \sum_{s_0+\dots+s_k=s} \mathfrak{m}^L_{k+s,\beta}(b^{\otimes s_0},x_1,b^{\otimes s_1},\dots,x_k,b^{\otimes s_k}) = \frac{\left(\partial\beta\cap b\right)^s}{s!} \mathfrak{m}^L_{k,\beta}(x_1,\dots,x_k).
\end{equation*}
\end{lemma}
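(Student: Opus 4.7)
The plan is to establish the three assertions of the lemma in order: existence of a minimal model on $H^*(L,\mathbb{C})$, decomposition of each operation by relative homology class, and the divisor identity. First I would build $(H^*(L,\mathbb{C}),\mathfrak{m}_k)$ from the singular-chain level $A_\infty$-algebra $(C_*(L,\mathbb{C}),\mathfrak{m}_k^{\mathbf{L}})$ constructed in \cite{FO3-book1} by homological perturbation. Choose a Hodge-style splitting $C_*(L,\mathbb{C})=\mathcal{H}\oplus \mathrm{im}(\partial)\oplus \mathcal{K}$ with $\mathcal{H}\cong H_*(L,\mathbb{C})$, inclusion $\iota$, projection $\pi$, and a chain homotopy $G$ witnessing $\mathrm{id}-\iota\pi=\partial G+G\partial$. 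The transferred structure is given by the Kontsevich--Soibelman tree formula
\begin{equation*}
\mathfrak{m}_k = \sum_{T}(\pm)\,\pi\circ \mathfrak{m}_T \circ \iota^{\otimes k},
\end{equation*}
summing over planar rooted trees $T$ with $k$ leaves, decorating internal vertices of valence $k_v+1$ by $\mathfrak{m}_{k_v}^{\mathbf{L}}$ and each internal edge by $G$. This yields a minimal $A_\infty$-structure on $H^*(L,\mathbb{C})$, canonical up to $A_\infty$-isomorphism.

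Second, the decomposition over $\beta\in H_2(X,L)$ is inherited directly from the moduli space. The Kuranishi-perturbed disc moduli $\mathscr{M}_{k+1}(L)$ splits as $\bigsqcup_\beta \mathscr{M}_{k+1}(L,\beta)$ and the local system $\xi$ contributes a factor $\hol_\xi(\partial\beta)$ along the boundary loop, so $\mathfrak{m}_k^{\mathbf{L}}=\sum_\beta \mathfrak{m}^L_{k,\beta}\hol_\xi(\partial\beta)$ already at the chain level. Because the tree sum is multilinear and holonomy is multiplicative under gluing along internal edges (which have trivial boundary), one can group trees by the total class $\beta=\sum_v \beta_v$ at the vertices to get the stated decomposition $\mathfrak{m}_k=\sum_\beta \mathfrak{m}^L_{k,\beta}\hol_\xi(\partial\beta)$ on $H^*(L,\mathbb{C})$.

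Third, for the divisor axiom, I would exploit the forgetful map
\begin{equation*}
\pi_s: \mathscr{M}_{k+s+1}(L,\beta) \longrightarrow \mathscr{M}_{k+1}(L,\beta),
\end{equation*}
which drops the last $s$ boundary marked points. This map is well-defined (no disc component becomes unstable, since $k+1$ marked points remain on a genuine disc) and its fiber over $u\in \mathscr{M}_{k+1}(L,\beta)$ is the ordered configuration space of $s$ distinct labelled points on $\partial\mathbb{D}\setminus\{z_0,\dots,z_k\}$. A weak composition $(s_0,\dots,s_k)$ with $\sum s_i=s$ picks out the open stratum in which $s_i$ of the extra points lie in the arc between $z_i$ and $z_{i+1}$; the contribution from this stratum is precisely the term $\mathfrak{m}^L_{k+s,\beta}(b^{\otimes s_0},x_1,b^{\otimes s_1},\dots,x_k,b^{\otimes s_k})$ appearing on the LHS. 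Pulling back $b\in H^1(L,\mathbb{C})$ by each of the $s$ extra boundary evaluations and using the identity $\int_{\partial u} b = \langle \partial\beta,b\rangle$, the fiber integration of $\mathrm{ev}^*(b^{\otimes s})$ along $\pi_s$ collapses to $(\partial\beta\cap b)^s/s!$ times the operation $\mathfrak{m}^L_{k,\beta}(x_1,\dots,x_k)$ obtained from the base. The $1/s!$ factor encodes that the $s$ $b$-insertions all carry the same input, so that summing the LHS over arc-partitions $(s_0,\dots,s_k)$ recovers the full fiber integral modulo the $S_s$-symmetry of the indistinguishable $b$-insertions.

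The main obstacle will be carrying the last step out rigorously at the chain level, since $\mathfrak{m}^L_{k,\beta}$ is defined via Kuranishi perturbation of moduli spaces rather than honest fiber integration. What needs to be verified is that the perturbation data on $\mathscr{M}_{k+s+1}(L,\beta)$ can be chosen to be pulled back from $\mathscr{M}_{k+1}(L,\beta)$ via $\pi_s$, so that the fiber-integration argument makes sense on chains; this compatibility is exactly the content of \cite[Lemma 13.1]{adic-convergence} and the earlier machinery of \cite{FO3-book1}. Once it holds on chains, it transports through the tree sum of the first step to the canonical model $H^*(L,\mathbb{C})$, since $\iota$, $\pi$, and $G$ commute with the decomposition over $\beta$ and act linearly on $b$.
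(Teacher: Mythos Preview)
The paper does not prove this lemma; it is quoted directly from \cite[Lemma 13.1]{adic-convergence} with no argument supplied, so there is no in-paper proof to compare your sketch against. Your outline is essentially the standard one behind the cited result: transfer to a minimal model via homological perturbation, keep the decomposition over $\beta$ through the tree formula, and derive the divisor identity from the forgetful map together with fiber integration over the boundary arcs.

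Two points to tighten. First, $\pi_s$ does not drop ``the last $s$'' marked points but the $s$ specific $b$-labelled points interleaved among the $x_i$; and its well-definedness relies on $\beta\neq 0$ so that the underlying disc remains stable after forgetting, not on the number of remaining marked points. The case $\beta=0$ should be handled separately (both sides vanish for $s\geq 1$). Second, your $1/s!$ explanation is heuristic. The precise mechanism is that on the $i$-th boundary arc the $s_i$ cyclically ordered extra points sweep an open simplex, so the fiber integral contributes $\tfrac{1}{s_i!}\bigl(\int_{\mathrm{arc}_i} b\bigr)^{s_i}$; summing over compositions and applying the multinomial identity gives
\[
\sum_{s_0+\cdots+s_k=s}\ \prod_{i}\frac{1}{s_i!}\Bigl(\int_{\mathrm{arc}_i} b\Bigr)^{s_i}
=\frac{1}{s!}\Bigl(\sum_i\int_{\mathrm{arc}_i} b\Bigr)^{s}
=\frac{(\partial\beta\cap b)^{s}}{s!}.
\]
You correctly identify the genuine obstacle: one must choose Kuranishi perturbations on $\mathscr{M}_{k+s+1}(L,\beta)$ compatibly with $\pi_s$ so that the fiber-integration argument is valid at the virtual chain level. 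That compatibility is exactly what \cite{adic-convergence} establishes, and your sketch would not be self-contained without it.
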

We note that in the previous lemma, the operation $\mathfrak{m}^L_{k,\beta}$ does not depend on the associated local system. 

\textit{Proof of Proposition \ref{split-generation}.}
For each $b\in H^1(L,\mathbb{C})$, we denote by $\mathfrak{m}_k^b$ the $A_{\infty}$-operations of Lemma \ref{divisor axiom} associated with the Lagrangian brane $(L,P,\xi_b)$, and we consider its $q$-deformation
\begin{equation*}
    \widehat{\mathfrak{m}}^b_k = \sum_{\beta\in H_2(X,L)} q^{\omega(\beta)} \mathfrak{m}^b_{k,\beta}.
\end{equation*}
These operations define an $A_{\infty}$-structure on $ H^*(L,\mathbb{C}[[q]])$, where $\mathbb{C}[[q]]$ is the ring of formal power series on $q$. The quotient operations
\begin{equation*}
    (H^*(L,\mathbb{C}[[q]])\otimes \mathbb{C}[[q]]/(q), \widehat{\mathfrak{m}}^b_1,\widehat{\mathfrak{m}}^b_2)
\end{equation*}
obtained by setting $q=0$ recover the singular cohomology of $L$ with its cup-product (up to sign). Because $L$ is a torus, this is an exterior algebra on $H^1(L,\mathbb{C})$.
Since $(q)$ is the unique maximal ideal of $\mathbb{C}[[q]]$, we deduce (by Nakayama's lemma) that the iterated multiplication map
\begin{equation}\label{iterated multiplication}
  \widehat{\mathfrak{m}}^b_2: \bigoplus_{l\geq 2} H^1(L,\mathbb{C}[[q]]) ^{\otimes l}\rightarrow H^*(L,\mathbb{C}[[q]])
\end{equation}
is surjective. Using the divisor axiom, one directly computes that
\begin{gather*}
    \widehat{\mathfrak{m}}^b_0 = q\mathbf{m}_0(L,\xi_b) = qW(\mathbf{m}_b).\\
    \widehat{\mathfrak{m}}^b_1(x) = d_b(\widehat{\mathfrak{m}}^b_0)(x),\ \ \  x\in H^1(L,\mathbb{C}).
\end{gather*}
Therefore, if $\xi_b$ is a critical point for $W$, we deduce that $\widehat{\mathfrak{m}}^b_1 = 0$ on $H^1(L,\mathbb{C})$, and subsequently on all of $H^*(L,\mathbb{C}[[q]])$ because (\ref{iterated multiplication}) is surjective. In particular (setting $q=1$, recall that $L$ is monotone), we deduce that $\mathfrak{m}_1^b=0$, and therefore, using Lemma \ref{morse bott = floer trajectories}, we get an isomorphism of complex vector spaces,
\begin{equation*}
    HF^*(\mathbf{L}) = H^*(L,\mathbb{C})\neq 0.
\end{equation*}
The generation result follows now from \cite[Theorem 12.19]{ritter-smith}.\qed

\bibliographystyle{alpha}
\bibliography{bibliography.bib}
\end{document}